\newtheorem{theorem}{Theorem}[section]
\newtheorem{lemma}[theorem]{Lemma}
\newtheorem{proposition}[theorem]{Proposition}
\newtheorem{corollary}[theorem]{Corollary}
\theoremstyle{definition}
\newtheorem{definition}[theorem]{Definition}
\newtheorem{remark}[theorem]{Remark}
\numberwithin{equation}{section}
\acrodef{SHE}[SHE]{Stochastic Heat Equation}
\acrodef{KPZ}[KPZ]{Kardar--Parisi--Zhang}
\acrodef{ASEP}[ASEP]{Asymmetric Simple Exclusion Process}
\acrodef{BDG}[BDG]{Burkholder--Davis--Gundy}
\acrodef{EP}[HSEP]{Higher Spin Exclusion Process}
\newcommand{\move}{ K }
\newcommand{\vecmove}{\vec{K}}
\newcommand{\moveCent}{ \overline{K} }
\newcommand{\moveI}{ I }
\newcommand{\moveICent}{ \overline{I} }
\newcommand{\moveIp}{ \widetilde{I} }
\newcommand{\B}{ B }		%Bernoulli
\newcommand{\Bp}{ B' }		%Bernoulli'
\newcommand{\RWp}{ R' }			%random walk, raw
\newcommand{\RW}{ R }			%random walk, centered
\newcommand{\hkp}{ p' }			%heat kernel, raw
\newcommand{\hk}{p}				%heat kernel, centered
\newcommand{\hke}{p_\varepsilon}%heat kernel with epsilon
\newcommand{\genps}{\widehat{\mathcal{L}}_\varepsilon}		%pseudo generator
\newcommand{\gen}{\mathcal{L}_\varepsilon}		%generator epsilon
\newcommand{\mg}{ M }		% Z*noise
\newcommand{\mgg}{ W }		% noise
\newcommand{\mggp}{ W' }	% noise raw
\newcommand{\norm}[2]{ |#1|_{#2} }		
\newcommand{\normd}[2]{ |#1|^*_{#2} }	%norm for delta I.C.
\newcommand{\Xsp}{ \mathbb{X} }
\newcommand{\Gsp}{ \mathbb{G} }		
\newcommand{\Tran}{ \mathcal{T} }	% y(s) --> y(s+1) mapping
\newcommand{\img}{\mathbf{i}}			% 'i'
\newcommand{\bd}{ \mathcal{B}_\e }			% bonded term
\newcommand{\er}{ \mathcal{E}_\e }			% error term
\newcommand{\qv}{ \Theta }	
\newcommand{\qvp}{ \Theta' }	
\newcommand{\const}{ \gamma }
\newcommand{\drift}{ \mu }
\newcommand{\Drift}{ \widehat{\mu} }
\newcommand{\dens}{ \rho }
\newcommand{\cent}{ \lambda }
\newcommand{\Cent}{ \widehat{\lambda} }
\newcommand{\var}{ \sigma }
\newcommand{\xcent}{ r_* }
\newcommand{\qe}{ q_\varepsilon }
\newcommand{\drifte}{ \mu_\varepsilon }
\newcommand{\Drifte}{ \widehat{\mu}_\varepsilon }
\newcommand{\cente}{ \lambda_\varepsilon }
\newcommand{\Cente}{ \widehat{\lambda}_\varepsilon }
\newcommand{\tcente}{ \tau^{\varepsilon}_* }
\newcommand{\vare}{ \sigma_\varepsilon }
\newcommand{\e}{ \varepsilon }
\newcommand{\Ne}{N^\varepsilon}
\newcommand{\hatNe}{\widehat{N}^\varepsilon}
\newcommand{\Ze}{Z_\varepsilon}
\newcommand{\alphae}{\alpha_\varepsilon}
\newcommand{\alphaej}[1]{\alpha^\varepsilon_{#1}}
\newcommand{\aej}[1]{a^{\varepsilon}_{#1}}
\renewcommand{\ne}{n_\varepsilon}
\newcommand{\te}{t_\varepsilon}
\newcommand{\vecxe}{\vec{x}^{\varepsilon}}
\newcommand{\aux}{D}
\newcommand{\vecg}{\vec{g}}
\newcommand{\vecx}{\vec{x}}
\newcommand{\vecy}{\vec{y}}
\newcommand{\veczeta}{\vec{\zeta}}
\newcommand{\Hlim}{\mathcal{H}}								%limiting H
\newcommand{\Hdlim}{\widetilde{\mathcal{H}}}				%limiting H, for delta
\newcommand{\Hx}{H^J_\varepsilon}
\newcommand{\Hdx}{\widetilde{H}^J_\varepsilon}
\newcommand{\Zlim}{\mathcal{Z}}								%limiting Z
\newcommand{\Zic}{\mathcal{Z}^{\text{ic}}}								%limiting Z
\newcommand{\Zdr}{Z_\text{dr}}						
\newcommand{\Zmg}{Z_\text{mg}}
\newcommand{\Zdlim}{\widetilde{\mathcal{Z}}}				
\newcommand{\Zd}{\widetilde{Z}}						
\newcommand{\Zde}{\widetilde{Z}_\varepsilon}	
\newcommand{\Zddr}{\widetilde{Z}_\text{dr}}		
\newcommand{\Zdmg}{\widetilde{Z}_\text{mg}}
\newcommand{\hatN}{\widehat{N}}
\DeclareMathOperator{\rmdd}{mod}
\newcommand{\rmd}{\rmdd_J}
\DeclareMathOperator{\Ex}{\mathbf{E}}	%expectation
\DeclareMathOperator{\Cov}{Cov}			%variance
\DeclareMathOperator{\Pro}{\mathbf{P}}	%probability
\DeclareMathOperator{\ind}{\mathbf{1}}	%indactor fx
\DeclareMathOperator{\Ber}{Ber}			%Bernoulli
\newcommand{\diff}{\frac{d~}{dx}}
\newcommand{\N}{ \mathbb{N} }
\newcommand{\bbC}{ \mathbb{C} }
\newcommand{\bbR}{ \mathbb{R} }
\newcommand{\bbZ}{ \mathbb{Z} }
\newcommand{\Pair}{ \mathscr{P} }
\newcommand{\scrF}{ \mathscr{F} }
\newcommand*{\Cdot}{{\raisebox{-0.5ex}{\scalebox{1.8}{$\cdot$}}}} % largedot
\newcommand{\BK}[1]{ {\left( #1 \right)} }
\newcommand{\sqBK}[1]{ {\left[ #1 \right]} }
\newcommand{\curBK}[1]{ {\left\{ #1 \right\}} }
\newcommand{\absBK}[1]{ {\left| #1 \right|} }
\newcommand{\VertBK}[1]{ {\left\Vert #1 \right\Vert} }
\newcommand{\BVert}{ \Big\Vert }
\newcommand{\bVert}{ \big\Vert }
\newcommand{\Vertbk}[1]{ \Vert #1 \Vert }
\newcommand{\angleBK}[1]{ {\left < #1 \right >} }
\newcommand{\anglebk}[1]{ \langle #1 \rangle }
\newcommand{\floorbk}[1]{ \lfloor #1 \rfloor }
\begin{document}

\title[KPZ equation of higher-spin exclusion processes]{KPZ equation limit of higher-spin exclusion processes}
\author[I.\ Corwin]{Ivan Corwin}
\address{I.\ Corwin,
	Departments of Mathematics, Columbia University,
	\newline\hphantom{\quad \ I.\ Corwin}
	2990 Broadway, New York, NY 10027
	}
\email{ivan.corwin@gmail.com}

\author[L.-C.\ Tsai]{Li-Cheng Tsai}
\address{L.-C.\ Tsai,
	Departments of Mathematics, Columbia University,
	\newline\hphantom{\quad \ L.-C. Tsai}
	2990 Broadway, New York, NY 10027
	}
\email{lctsai.math@gmail.com}

\subjclass[2010]{
	Primary 60K35, 		%Interacting random processes; statistical mechanics type models; percolation theory 
	Secondary 82C22, 	%Interacting particle systems
	82C23.  			%Exactly solvable dynamic models
}
\keywords{ 
	exclusion processes, Hopf-Cole transform, higher-spin, 
	Kardar-Parisi-Zhang equation, stochastic heat equation.
}
\maketitle

\begin{abstract}
We prove that under a particular \emph{weak scaling},
the 4-parameter interacting particle system introduced by Corwin and Petrov \cite{corwin15}
%(arXiv:1502.07374) 
converges to the \ac{KPZ} equation. 
This expands the relatively small number of systems 
for which \emph{weak universality} of the
\ac{KPZ} equation has been demonstrated.
\end{abstract}

\section{Introduction}
This paper demonstrates how the \ac{KPZ} equation \cite{kardar86} arises as a scaling limit of a 4-parameter interacting particle system introduced in \cite{corwin15} (called here the \ac{EP}) under fairly general choices of three parameters
($\nu\in [0,1)$, $\alpha>0$, $J\in\bbZ_{>0}$) and special tuning of the remaining paremeter ($q\to 1$). This system, through various specializations, and limit procedures includes all known integrable models in the \ac{KPZ} universality class. It is closely connected to the study of higher-spin vertex models within quantum integrable systems and hence enjoys a number of nice algebraic properties, some of which play important roles in our convergence proof.

The \ac{KPZ} equation is a paradigmatic continuum model for a randomly growing interface with local dynamics subject to smoothing, lateral growth and space-time noise (for more background, see the review \cite{corwin12}). Its spatial derivative solves the stochastic Burgers equation with conservative noise, and its exponential (Hopf-Cole transform) satisfies the \ac{SHE} with multiplicative white-noise. The connection to stochastic Burgers equation suggests a relation to interacting particle systems while the connection to the \ac{SHE} suggests a relation to directed polymer models (whose partition functions satisfy discrete versions of the \ac{SHE}).

The \ac{KPZ} equation is written as
\begin{align}\label{eq:KPZeq}
	\partial_{\tau}\Hlim(\tau,r)
	=
	\tfrac{1}{2}\delta \,\partial_r^2 \Hlim(\tau,r)
	+ \tfrac{1}{2} \kappa\, \big(\partial_r \Hlim(\tau,r)\big)^2
	+ \sqrt{D}\, \eta(\tau,r),
\end{align}
where $\eta$ is space-time white noise, $\delta,\kappa\in \bbR$, and $D>0$. Care is needed in making sense of the above equation, and the proper notion of solution is that of the {\it Hopf-Cole solution to the \ac{KPZ} equation} which is defined by setting $\Hlim(\tau,r) =\tfrac{\delta}{\kappa} \log \Zlim(\tau,r)$ where $\Zlim$ solves the well-posed \ac{SHE}
\begin{align}\label{eq:SHE}
	\partial_{\tau}\Zlim(\tau,r)
	=
	\tfrac{1}{2}\delta \,\partial_r^2 \Zlim(\tau,r)
	+ \tfrac{\kappa}{\delta} \sqrt{D} \,\Zlim(\tau,r) \eta(\tau,r).
\end{align}

To understand how a microscopic system might scale to the \ac{KPZ} equation, it helps to understand how the \ac{KPZ} equation itself scales. For real $b,z$ define $\Hlim^{\e}(\tau,r):= \e^b \Hlim (\e^{-z}\tau, \e^{-1}r)$. Then $\Hlim^\e$ satisfies the scaled equation
\begin{align*}
	\partial_{\tau}\Hlim^\e(\tau,r)
	=
	\e^{2-z}\tfrac{1}{2}\delta \,\partial_r^2 \Hlim^\e(\tau,r)
	+ \e^{2-z-b} \tfrac{1}{2} \kappa \,\big(\partial_r \Hlim^\e(\tau,r)\big)^2
	+ \e^{b-\frac{z}{2}+\frac{1}{2}} \sqrt{D} \, \eta(\tau,r).
\end{align*}
There exists no choice of $b,z$ for which the coefficients of the scaled equation remain unchanged. However, if one simultaneously changes the values of some of the $\delta,\kappa,D$ parameters as $\e$ changes, the \ac{KPZ} equation may scale to itself. If the \ac{KPZ} equation remains invariant under such a scaling, it stands to reason that a microscopic model with similar properties may converge to the equation under a similar type of scaling and tuning of parameters. Such scalings are generally called \emph{weak scalings} since they involve taking some of the $\delta,\kappa,D$ parameters to zero with $\e$. It is thus a goal to show the \emph{weak universality} of the \ac{KPZ} equation by demonstrating how under these scalings, the equation arises from a variety of different microscopic models. Weak universality should be distinguished from \emph{\ac{KPZ} universality} which holds that without any tuning of parameters, a variety of different systems will converge under the choice of $b=1/2$ and $z=3/2$ to a universal limit called the \emph{\ac{KPZ} fixed-point} \cite{Corwin15a}.

There are very few proved instances of weak universality of the \ac{KPZ} equation. The first result was in the context of the \ac{ASEP} \cite{bertini97} for near equilibrium initial condition (see also \cite{amir11} for step initial condition). The \ac{ASEP} result came under \emph{weak asymmetry} scaling through which $b=1/2, z=2$ and $\kappa\mapsto \e^{1/2} \kappa$ ($\delta$ and $D$ remain unscaled). This result was extended in \cite{dembo16} to certain non-nearest neighbor (and non-exactly solvable) exclusion processes. The only other weak universality result \cite{alberts14} was in the context of discrete directed polymers with arbitrary disordered distributions. This result came under \emph{weak noise} scaling through which $b=0, z=2$ and $D\mapsto \e D$ ($\delta$ and $\kappa$ remain unscaled).

Owing to the round-about Hopf-Cole definition of the \ac{KPZ} equation, in order to prove that a system converges to the \ac{KPZ} equation, one must transform it microscopically into an approximate \ac{SHE}. 
The work of \cite{Hairer} provides direct meaning to the \ac{KPZ} equation (though for $r$ on the torus, not the full real line). As of yet, this approach has not yielded weak universality results for the \ac{KPZ} equation.
The work of \cite{JaraGoncalves} defines an \emph{energy solution} 
for the \ac{KPZ} equation and shows tightness of a certain class of 
interacting particle systems at equilibrium
and that all limit points are energy solutions.
The recent work of \cite{gubinelli15} establishes the uniqueness
of equilibrium energy solution on the torus
(under the time-reversal symmetry assumption enjoyed by Markov processes at equilibrium).

The partition function for a directed polymer model naturally solves a microscopic \ac{SHE} with a simple noise. On the other hand, the ASEP result relies heavily on the existence of a microscopic Hopf-Cole transform (known as the G\"{a}rtner transform \cite{gaertner87}), and the resulting \ac{SHE} has a much more complicated noise. This renders the associated analysis quite challenging. The work of \cite{dembo16} also relies on an approximate form of the G\"{a}rtner transform.
%
%Microscopic Hopf-Cole transforms are hard to come by. 
%For the model considered herein, this transform is achieved in Proposition~\ref{prop:dSHE}. 
%The first indication that such a transform should exist came from the Markov duality 
%enjoyed by the model, given in \cite[Theorem~2.19]{corwin15}. This result shows that $\Ex(q^{y_n(t)+n})$ satisfies the Kolmogorov backward equation in the $t$ and $n$ variables for a very simple one-particle random walk. The existence of a nice martingale in the evolution equation for $q^{y_n(t)+n}$ requires further work given here.
%
Microscopic Hopf-Cole transforms are hard to come by. 
For the model considered herein, this transform is achieved in Proposition~\ref{prop:dSHE}. 
The first indication that such a transform should 
exist came from the Markov duality enjoyed by the model; see Remark~\ref{rmk:duality}
for more discussions.

The particular choice of weak scalings present in our result is new. It corresponds to the \ac{KPZ} equation scaling given by $b=1, z=3, \delta\mapsto \e \delta, \kappa \mapsto \e^2 \kappa$ and $D$ remaining unchanged. In terms of the scaling of the microscopic model, we have $b=1, z=3$ and $q=e^{-\e}$, while $\nu\in [0,1)$, $\alpha>0$ and $J\in\bbZ_{>0}$ remain fixed. One sees that microscopically, these choices of parameters corresponds to the above weak scaling. As there are many parameters at play, it is likely that there exist other weak scalings of the system which realized the same \ac{KPZ} equation limit.

There are a number of degenerations of the \ac{EP}, including the discrete time Bernoulli $q$-TASEP \cite{borodin13} and (through a limit transition) the continuous time $q$-TASEP \cite{BigMac,BCS}. Strictly speaking, our results do not immediately apply to the continuous time $q$-TASEP. 
%In fact, this case has been studied in \cite{morenoflores} and similar results have been proved there in (though that paper has not yet appeared in print). 
The restriction on parameters $\nu\in [0,1)$, $\alpha>0$ and $J\in \bbZ_{>0}$ does not allow us to probe all of the degenerations of the system introduced in \cite{corwin15}. For instance, the stochastic six-vertex model \cite{BCG}(a discrete time version of ASEP) arises through a different choice of specialization, as does the $q$-Hahn TASEP \cite{CHahn}. These systems likewise enjoy dualities and one may hope to prove their weak universality. We leave this for future work.

\subsubsection*{Outline}
In Section \ref{sect:main} we introduce the 4-parameter particle system and then proceed to state our main results. These are stated in terms of the \ac{SHE} as Theorems~\ref{thm:main} and \ref{thm:step} (though Corollary \ref{cor:KPZ} provides the equivalent statements in terms of the \ac{KPZ} equation). Section \ref{sect:dSHE} provides the discrete Hopf-Cole transform satisfied by the system. Section \ref{sect:mom} provides moment estimates necessary to show tightness as $\e\to 0$. Section \ref{sect:mg} demonstrates how the limit points satisfy the martingale problem for the \ac{SHE}.

\subsubsection*{Acknowledgements}
IC wishes to thank Jeremy Quastel for discussions regarding his work on the convergence of $q$-TASEP to the \ac{KPZ} equation. 
We thank Yier Lin for careful reading of the manuscript and for pointing out a mistake (see Remark~\ref{rmk:error})
in an earlier version of this article (which is remedied in the present arXiv version, though not in the published version).
IC was partially supported by the NSF through DMS-1208998 as well as by the Clay Mathematics Institute through the Clay Research Fellowship, by the Institute Henri Poincare through the Poincare Chair, and by the Packard Foundation through a Packard Foundation Fellowship. LCT was partially supported by the NSF through DMS-0709248.
\section{Definition of the Model and Results}\label{sect:main}

We begin by recalling the definition of the \ac{EP}.
Let $ \N:=\bbZ_{\geq 0} $, $ \N^* := \N\cup\{\infty\} $ and $ \bbZ^*:=\bbZ\cup\{\infty\} $.
Define the space of right-finite particle configurations
\begin{align*}
	\Xsp_m
	:=
	\curBK{
		\vecx = (\infty =\ldots = x_{m-2}=x_{m-1}>x_{m}>x_{m+1}>\ldots) \in \bbZ^*	
	},
\end{align*}
where imaginary particles are placed at $ \infty $ for the convenience of notations,
and define the space of infinite particle configurations
$ \Xsp_{\infty} := \{\vecx=(\ldots>x_{-1}>x_{0}>x_{1}>\ldots)\in\bbZ^{\bbZ}\} $,
with the corresponding spaces of gap configurations
\begin{align*}
	\Gsp_m :=
	\curBK{ \vecg=(g_n) : g_n=\infty,\forall n< m; \quad g_n\in\N,\forall n\geq m },
	\quad
	\Gsp_\infty := \N^\bbZ.
\end{align*}
Fixing $ J\in\bbZ_{>0} $, we let $ \rmd(s) := s - \floorbk{s/J}J $,
or more explicitly,
$ (\rmd(0),\rmd(1),\ldots) = (0,1,\ldots,J-1,0,\ldots,J-1,\ldots) $.
%we partition $ \N $ into intervals of length $ J $:
%\begin{align*}
%	\mod: s\in\N \longmapsto
%	(\quot(s),\rmd(s)) := (\floorbk{sJ^{-1}}, s -\floorbk{sJ^{-1}} J)
%	\in \N\times \curBK{0,\ldots,J-1},
%\end{align*}
%or more explicitly,
%\begin{align*}
%	(\mod(0),\mod(1),\mod(2),\ldots)
%	=
%	\big( (0,0), (0,1), \ldots, (0,J-1), (1,0), \ldots, (1,J-1),\ldots \big).
%\end{align*}
Fixing $ q,\nu\in[0,1)$ and $\alpha>0$
we
let $ \alpha_j := \alpha q^{j} $
and $ \alpha(s) := \alpha_{\rmd(s)} $,
and equip our probability space
with independent Bernoulli random variables
\begin{align*}
	\B_n(s,g) \sim \Ber\BK{ \frac{\alpha(s)(1-q^g)}{1+\alpha(s)} },
	\quad
	\Bp_n(s,g) \sim \Ber\BK{ \frac{\alpha(s)+\nu q^g}{1+\alpha(s)} },
\end{align*}
indexed by $ (s,g,n)\in(\N,\N^*,\bbZ) $,
with the corresponding filtration
$ \scrF(t) := \sigma( \B_n(s,g), \Bp_n(s,g): (n,g)\in\N\times\N^*, s=0,\ldots,t-1 ) $.
%Set
%$
%	\Bsp(s) := \{ \B_n(s,g), \Bp_n(s,g): g\in\N^*, n\in\bbZ \}.
%$
Recall from \cite{corwin15} the following definition of the \ac{EP}.

\begin{definition}\label{def:EPseq}
Given $ \vecx(0)\in\Xsp_m $, a right-finite particle configuration,
we define an $ \Xsp_m $-valued Markov chain $ \{\vecy(s)\}_{s\in\N} $
by setting $ \vecy(0):=\vecx(0) $, and update $ \vecy(s) $ as follows.
We update $ \vecy(s) $ sequentially, starting from $ m $, by letting
\begin{align}\label{eq:seqn}
	y_m(s+1) = y_m(s) + \B_m(s,\infty),
\end{align}
and letting, for $ n>m $,
\begin{align}
	\label{eq:seqi}
	y_n(s+1) =
	\left\{ \begin{array}{l@{,}l}
		y_n(s) + \Bp_n(s,g_n(s))	&	\text{ if } y_{n-1}(s+1) > y_{n-1}(s),
		\\
		y_n(s) + \B_n(s,g_n(s))	&	\text{ if } y_{n-1}(s+1) = y_{n-1}(s),	
	\end{array}
	\right.
\end{align}
where $ g_n(s) := y_{n-1}(s) - y_{n}(s) -1 $ the $ n $-th gap of $ \vecy(s) $.
Namely, we move $ y_n(s) $ one step to the right with probability $ \frac{\alpha(s)}{1+\alpha(s)} $,
and subsequently, we move $ y_{n}(s) $ depending on how $ y_{n-1}(s) $ was updated:
if $ y_{n-1}(s) $ did not moved we then move $ y_n(s) $ one step to the right with probability 
$ \frac{\alpha(s)(1-q^{g_n(s)})}{1+\alpha(s)} $,
otherwise we move $ y_n(s) $ one step to the right 
with probability $ \frac{\alpha(s)+\nu q^{g_n(s)}}{1+\alpha(s)} $.

The \ac{EP} $ \{\vecx(t)\}_{t\in\N} $ is
then defined as the $ \Xsp_m $-valued Markov chain $ \vecx(t) := \vecy(Jt) $.
\end{definition}

\begin{remark}
The Markov chain $\vecy(t)$ was defined through a local sequential update of particles. Taking $J>1$ and considering $\vecx(t)$,
a priori one might think this property is lost. It was shown in \cite[Section 3]{corwin15} that, in fact, $\vecx(t)$ can be updated through a local sequential update (just like for $J=1$). In this case, each particle may move a distance between $0$ to $J$ sites to the right, and the jump probabilities depend on the length of the previous particle's jump as well as the length of the gap. The explicit form of this probability is somewhat more involved and given in \cite[Theorem~3.15]{corwin15}. In particular, given a gap $g$ and a jump of the previous particle by $h$, a particle jumps by $h'$ according to the probability given by $R^{(J)}_{\alpha} (g,h;g+h-h',h')$ where a concise formulas for  $R^{(J)}_\alpha$ is given in \cite[Theorem~3.15]{corwin15} and there is also a dependence on $q,\nu$ which is suppressed in the notation. For the purposes of this paper, it suffices to study the $\vecy(t)$ process and prove convergence of it to the \ac{KPZ} equation. It follows then immediately that the $\vecx(t)$ process likewise converges.
\end{remark}

The process introduced in Definition \ref{def:EPseq} is defined by
the \emph{sequential} update of \eqref{eq:seqn}--\eqref{eq:seqi},
which is inconvenient for our purpose.
We now recast the definition as a \emph{parallel} update,
and, as a byproduct, extend Definition~\ref{def:EPseq} to the space
$ \Xsp := \cup_{n\in\bbZ^*} \Xsp_n $
of possibly infinite particle configurations.
To this end we require the following lemma, which we prove in Section~\ref{sect:dSHE}.
\begin{lemma}\label{lem:move}
For any fixed $ \vecg \in (\N^*)^\bbZ $, $ s\in\N $, $ m\leq n\in\bbZ $,
letting
\begin{align}\label{eq:moveI}
	\moveI_{n,m}(s,\vecg)
	:=
	\Big(
		\prod_{n\geq i> m}
		\BK{ \Bp_{i}(s,g_{i}) - \B_{i}(s,g_{i}) }
	\Big)
	\B_m(s,g_m),
\end{align}
we have
\begin{align}\label{eq:move}
	\move_n(s,\vecg) := \sum_{m: n\geq m} \moveI_{n,m} \in\{0,1\},
\end{align}
where the series converges in $ L^k $ for all $ k\geq 1 $ and hence almost surely.
Further,
\begin{align}\label{eq:moveIter}
	\move_{n}(s,\vecg)
	=
	\move_{n-1}(s,\vecg) \Bp_n(s,g_n) + (1-\move_{n-1}(s,\vecg)) \B_n(s,g_n).
\end{align}
\end{lemma}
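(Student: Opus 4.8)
The key algebraic observation is a telescoping identity. Fix $n$ and write $\beta_i := \B_i(s,g_i)$, $\beta'_i := \Bp_i(s,g_i)$, all taking values in $\{0,1\}$, with $\beta'_i \geq \beta_i$ almost surely because $\frac{\alpha(s)+\nu q^{g_i}}{1+\alpha(s)} \geq \frac{\alpha(s)(1-q^{g_i})}{1+\alpha(s)}$ (indeed the difference is $\frac{q^{g_i}(\alpha(s)+\nu)}{1+\alpha(s)}\ge 0$). So each factor $\beta'_i-\beta_i \in \{0,1\}$ as well. The plan is to first establish \eqref{eq:moveIter} as a finite identity for the truncated sums $\move_n^{(M)}(s,\vecg) := \sum_{M\le m\le n}\moveI_{n,m}$, and then pass to the limit $M\to-\infty$.

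\emph{Step 1: the finite recursion.} For $i\le n$ define $P_{n,i} := \prod_{n\ge j>i}(\beta'_j-\beta_j)$ (empty product $=1$, so $P_{n,n}=1$), so that $\moveI_{n,m} = P_{n,m}\beta_m$. From the trivial identity $P_{n,m} = (\beta'_n-\beta_n)P_{n-1,m}$ for $m<n$ one reads off, for any $M\le n-1$,
\begin{align*}
	\move_n^{(M)}(s,\vecg)
	&= \beta_n + \sum_{M\le m<n}(\beta'_n-\beta_n)P_{n-1,m}\beta_m
	= \beta_n + (\beta'_n-\beta_n)\move_{n-1}^{(M)}(s,\vecg).
\end{align*}
Rearranging gives exactly $\move_n^{(M)} = \move_{n-1}^{(M)}\beta'_n + (1-\move_{n-1}^{(M)})\beta_n$, the truncated form of \eqref{eq:moveIter}. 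An easy induction on $n-M$ using this recursion and the fact that $\beta_n,\beta'_n\in\{0,1\}$ with $\move_{M}^{(M)} = \beta_M\in\{0,1\}$ then shows $\move_n^{(M)}\in\{0,1\}$ for every $M$: if $\move_{n-1}^{(M)}\in\{0,1\}$ the right side is either $\beta'_n$ or $\beta_n$, hence in $\{0,1\}$.

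\emph{Step 2: $L^k$ convergence as $M\to-\infty$.} Since the $\B_i,\Bp_i$ are independent, $\Ex[(\beta'_j-\beta_j)]$ equals the single number $\frac{q^{g_j}(\alpha(s)+\nu)}{1+\alpha(s)}$, which (because $q,\nu\in[0,1)$, $\alpha>0$, $g_j\ge0$) is bounded above by some $c<1$ whenever $g_j\ge1$; care is needed only where $g_j=0$, in which case $\beta'_j-\beta_j = \beta'_j \sim \Ber(\tfrac{\alpha(s)}{1+\alpha(s)})$, still with mean $<1$ strictly, so in all cases $\Ex[\beta'_j-\beta_j]\le c<1$ for a constant $c$ depending only on $\alpha,\nu,q$. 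For the tail $\sum_{m<M}\moveI_{n,m}$ one has $|\moveI_{n,m}| = P_{n,m}\beta_m\le \prod_{m<j\le n}(\beta'_j-\beta_j)$, and by independence
\begin{align*}
	\Ex\Big[\Big(\sum_{m<M}|\moveI_{n,m}|\Big)^k\Big]
	\le \sum_{m_1,\dots,m_k<M}\prod_{j}\Ex\big[(\beta'_j-\beta_j)^{\ind\{\exists\, l:\,m_l<j\le n\}}\big]
	\le \big(\textstyle\sum_{m<M}c^{\,n-m}\big)^k\big/(1-c)^{?},
\end{align*}
more simply: bound each $|\moveI_{n,m}|\le \xi_m:=\prod_{m<j\le n}(\beta'_j-\beta_j)$ and note $\sum_{m<M}\xi_m$ is dominated in $L^k$ by a geometric-type series since $\Ex[\xi_m^k]=\Ex[\xi_m]\le c^{\,n-m}$ (the $\xi_m$ are $\{0,1\}$-valued) and, using $\xi_m\le \xi_{m'}$ for $m\le m'$ together with a union/Minkowski estimate, $\VertBK{\sum_{m<M}\xi_m}_k \le \sum_{m<M}\VertBK{\xi_m}_k\le \sum_{m<M}c^{(n-m)/k}\to 0$ as $M\to-\infty$. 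Hence $(\move_n^{(M)})_M$ is Cauchy in $L^k$ for every $k\ge1$; its $L^k$-limit is $\move_n(s,\vecg)$, which therefore also converges almost surely along a subsequence, and being a monotone-type tail argument one upgrades to a.s.\ convergence of the full series. Since each $\move_n^{(M)}\in\{0,1\}$ and $L^1$ limits of $\{0,1\}$-valued random variables that converge a.s.\ are again $\{0,1\}$-valued, $\move_n(s,\vecg)\in\{0,1\}$, proving \eqref{eq:move}. Finally, passing to the limit $M\to-\infty$ in the truncated recursion of Step 1 — legitimate since both sides converge in $L^1$ — yields \eqref{eq:moveIter}.

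\emph{Main obstacle.} The only genuinely delicate point is the tail estimate in Step 2: one must verify that $\Ex[\beta'_j-\beta_j]$ is bounded away from $1$ \emph{uniformly} in $j$ and in the gap values $g_j\in\N^*$ (including $g_j=\infty$, where $q^{g_j}=0$ and the factor has mean $\tfrac{\alpha(s)}{1+\alpha(s)}<1$), so that the geometric decay kicks in regardless of the configuration $\vecg$; once that uniform bound $c<1$ is in hand, the $L^k$ summability and all the interchange-of-limit steps are routine. Everything else is the elementary telescoping of Step 1.
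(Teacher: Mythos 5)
Your overall architecture is the same as the paper's (truncated partial sums, the telescoping recursion $\move_n^{(M)}=\B_n+(\Bp_n-\B_n)\move_{n-1}^{(M)}$, induction to get the $\{0,1\}$ property, $L^k$ control of the tail, then passing to the limit in the recursion), but there is a genuine error in the premise underlying your Step~2. You assert that $\Bp_i(s,g_i)\geq \B_i(s,g_i)$ almost surely because the Bernoulli parameters are ordered. That inference is false here: the paper equips the probability space with \emph{independent} Bernoulli variables $\B_n(s,g)$ and $\Bp_n(s,g)$; there is no monotone coupling, so the event $\{\Bp_i=0,\,\B_i=1\}$ has positive probability whenever both parameters are in $(0,1)$, and $\Bp_i-\B_i$ takes values in $\{-1,0,1\}$, not $\{0,1\}$. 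Consequently your bound $|\moveI_{n,m}|\leq\prod_{m<j\leq n}(\Bp_j-\B_j)$ is not valid as written (the right-hand side can be negative), the identity $\Ex[\xi_m^k]=\Ex[\xi_m]$ fails, and the decay rate you compute, $\Ex[\Bp_j-\B_j]=\tfrac{q^{g_j}(\alpha+\nu)}{1+\alpha}$, is the wrong quantity: what controls absolute convergence of the series is $\Ex\absBK{\Bp_j-\B_j}$, which under independence equals $1-p'p-(1-p')(1-p)$ and is bounded by $1-\tfrac{1}{1+\alpha}\tfrac{1-\nu}{1+\alpha}<1$ uniformly in the gap. This is exactly the bound the paper uses, and it is derived from independence, not from any pathwise ordering.

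The good news is that the error is localized and repairable: replace every occurrence of $\Bp_j-\B_j$ in your tail estimate by $\absBK{\Bp_j-\B_j}\in\{0,1\}$, note $\Ex\absBK{\Bp_j-\B_j}\leq c:=1-\tfrac{1-\nu}{(1+\alpha)^2}<1$ uniformly over $g_j\in\N^*$ (including $g_j=0$ and $g_j=\infty$), and your geometric-series/Minkowski argument then gives the $L^k$ Cauchy property verbatim. Your Step~1 is unaffected: the induction showing $\move_n^{(M)}\in\{0,1\}$ only uses that when $\move_{n-1}^{(M)}\in\{0,1\}$ the recursion outputs either $\Bp_n$ or $\B_n$, and never needs $\Bp_n-\B_n\geq 0$. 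So the "main obstacle" you flagged (uniformity of the decay over gap configurations) is indeed the right concern, but you must verify it for the mean of the \emph{absolute} difference of two independent Bernoullis rather than for the difference of their means.
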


\begin{definition}\label{def:EPpar}
Fix $ m\in\N^* $ and $ \vecx(0)\in\Xsp_m $.
Letting $ \vecg(\vecy) := (y_{n-1}-y_{n}-1)_{n\in\bbZ} $ and
$ \vecmove(s,\vecg) := (K_n(s,\vecg))_{n\in\bbZ} \in \{0,1\}^\bbZ $ (by Lemma~\ref{lem:move}),
we define a stochastic map
\begin{align}\label{eq:trans}
	\Tran(s):	\Xsp_m \longrightarrow \Xsp_m,
	\quad
	\vecy \longmapsto \vecy + \vecmove(s,\vecg(\vecy)),
\end{align}
and define the $ \Xsp_m $-valued Markov chain $ \{\vecx(t)\} $ and $ \{\vecy(t)\} $
by letting
$ \vecy(s) := \Tran(s-1) \circ \Tran(s-2) \circ \cdots \circ \Tran(0)(\vecx(0)) $
and $ \vecx(t) := \vecy(tJ) $.
\end{definition}
\begin{remark}\label{rmk:EPequiv}
Under the map $ \Tran(s) $, we have that
$
	\move_{n}(s,\vecg(\vecy(s))) = \ind_\curBK{y_{n}(s+1)>y_n(s)},
$
whereby \eqref{eq:moveIter} becomes
\begin{align*}
	\ind_\curBK{ y_{n}(s+1)>y_n(s) }
	=
	\ind_\curBK{ y_{n-1}(s+1)>y_{n-1}(s) } \Bp_n(s,g_n(s))
	+
	\ind_\curBK{ y_{n-1}(s+1)=y_{n-1}(s) } \B_n(s,g_n(s)).
\end{align*}
This is equivalent to \eqref{eq:seqi}, which reduces to \eqref{eq:seqn} when $ g_{m-1}=\infty $.
It is thus easy to see that %$ \Tran(\Xsp_m) \subset \Xsp_m $, for all $ m\in\bbZ^* $,
Definition~\ref{def:EPpar} is equivalent to Definition~\ref{def:EPseq}
when restricting to $ \vecx(0)\in\Xsp_m $, $ m\in\N $.
\end{remark}

Our main result is the convergence to the \ac{SHE} of a certain
exponential transform of the process $ \vecy(t) $.
Recall that we say a process $ \Zlim $ on $ \bbR_+\times\bbR $ is a mild
solution of the \ac{SHE} starting from the initial condition $ \Zic $ if
\begin{align}\label{eq:SHEmild}
	\Zlim(\tau,r)
	= \int_{\bbR} P_\tau(r-r') \Zic(r') dr'
		+ \int_0^{\tau} \int_{\bbR} P_{\tau-\tau'}(r-r') \Zlim(\tau',r') \eta(dr',d\tau'),
\end{align}
where $ P_\tau(r) := \exp[-r^2/(2\tau)] (2\pi\tau)^{-1/2} $ denotes the standard heat kernel,
and $\eta$ denotes the space-time white noise.
For the existence, uniqueness, continuity, and positivity of solutions of \eqref{eq:SHEmild},
see \cite[Proposition~2.5]{corwin12}.

The key step of showing the convergence to the \ac{SHE}
is finding a discrete \ac{SHE}.
To state it, we fix a parameter $ \dens\in(0,1) $,
measuring the limiting density (see Remark~\ref{rmk:dens}),
and set
\begin{align}
	&
	\label{eq:abconst}	
	\const := \frac{1-\dens}{1-\nu \dens},
	\quad\quad
	a_j := \frac{\alpha_j \const}{1+\alpha_j \const},
	\quad\quad
	b := \frac{\const}{1-\const},
	\quad\quad
	b' := \frac{\nu\const}{1-\nu\const},	
\\	
	\label{eq:driftt}
	&
	\drift(t) := (a_{\rmd(t)}-a_{\rmd(t)+1})(b-b')^{-1},
\\
	&
	\label{eq:centt}
	\cent(t) := \frac{1+\alpha(t) \const}{1+q\alpha(t) \const},
\\
	&
	\notag
	\Drift(t) := \sum_{s=0}^{t-1} \drift(s),
	\quad
	\Cent(t) := \prod_{s=0}^{t-1} \cent(s),&
\end{align}
with the conventions $ \Drift(0) := 0 $ and $ \Cent(0) := 1 $.
Letting $ Q_n(t) := q^{y_n(t)+n} $ denote the one-particle duality function
(see \cite{corwin15} for the definition of duality functions),
we define the exponential transform
\begin{align}\label{eq:Z}
	Z(t,\xi) := \Cent(t) \dens^{\xi+\Drift(t)} Q_{\xi+\Drift(t)}(t),
\end{align}
for $ t\in\N$ and $\xi \in \Xi(t) := (\bbZ-\Drift(t)) $.
The discrete \ac{SHE} is expressed in terms of
a certain random walk $ \RW(0)+\ldots+\RW(t-1) $ on $ \bbR $.
Here, $\RW(s)\in(\N - \drift(s))$, $ s\in\N $,
are independent random variables introduced in \eqref{eq:RW},
with zero mean and variance as in \eqref{eq:RWvar}.
Let $ \Xi(t_2,t_1) := \N+(\Drift(t_1)-\Drift(t_2)) $.
For $ t_1\leq t_2 $, $ \zeta\in\Xi(t_2,t_1) $,
let
\begin{align}
	\label{eq:hk}
	\hk(t_2,t_1,\zeta) := \Pro(\RW(t_1)+\RW(t_1+1)+\ldots+\RW(t_2-1)=\zeta)
\end{align}
denotes the corresponding semigroup.
We use the shorthand notations
$
	[\hk(t_2,t_1)*f(t_1)](\xi):=
	\sum_{\zeta\in\Xi(t_1)} \hk(t_2,t_1,\xi-\zeta) f(t_1,\zeta)
$
to denote convolution.
Let $ \moveCent_{n}(s) := \move_n(s) - \Ex(\move_n(s)|\scrF(s)) $ and let
\begin{align}\label{eq:mgg}
	\mgg(t,\xi) := \cent(t) (q-1) \moveCent_{\xi+\Drift(t)}(t),
\end{align}
representing the discrete analog of the space-time white noise.

\begin{proposition}\label{prop:dSHE}
For all $ t_1\leq t_2\in\N $ and $ \xi\in\Xi(t_2) $, we have
the following discrete \ac{SHE}
\begin{align}\label{eq:dSHE}
	Z(t_2,\xi)
	=
	\Zdr(t_2,t_1,\xi)
	+
	\Zmg(t_2,t_1,\xi),
\end{align}
where
\begin{align}
	&
	\label{eq:Zdr}
	\Zdr(t_2,t_1,\xi) := [\hk(t_2,t_1)*Z(t_1) ](\xi),
\\
	&
	\label{eq:Zmg}
	\Zmg(t_2,t_1,\xi)
	:=
	\sum_{s=t_1}^{t_2-1} [\hk(t_2,s+1)*(Z(s)W(s)) ](\xi+\drift(s)).
\end{align}
Further, for all $ \xi_1,\xi_2\in\Xi(t) $,
\begin{align}\label{eq:qv}
\begin{split}
	&
	Z(t,\xi_1) Z(t,\xi_2)
	\Ex \big[
		\mgg(t,\xi_1) \mgg(t,\xi_2) \big| \scrF(t)
		\big]
\\
	&
	\quad
	=
	\BK{\frac{(\nu+\alpha(t))\dens}{1+\alpha(t)}}^{|\xi_1-\xi_2|}
	\qv_1(t,\xi_1\wedge\xi_2) \qv_2(t,\xi_1\wedge\xi_2),
\end{split}
\end{align}
where
\begin{align}
	&
	\label{eq:qv1}
	\qv_1(t,\xi) :=
	q \cent(t) Z(t,\xi) - [\hk(t+1,t)*Z(t)](\xi-\drift(t)),
\\
	&
	\label{eq:qv2}
	\qv_2(t,\xi) :=
	- \cent(t)  Z(t,\xi) + \big[\hk(t+1,t)*Z(t)](\xi-\drift(t)).
\end{align}
\end{proposition}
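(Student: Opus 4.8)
The plan is to reduce \eqref{eq:dSHE} to its one-step case $t_2=t_1+1$, propagate that by the Markov property, and obtain \eqref{eq:qv} from a direct computation of the conditional covariance of the jump indicators $\move_n(t):=\move_n(t,\vecg(\vecy(t)))$. Under $\Tran(t)$ one has $y_n(t+1)=y_n(t)+\move_n(t)$ with $\move_n(t)\in\{0,1\}$ (Lemma~\ref{lem:move} and Remark~\ref{rmk:EPequiv}), so, using $q^{\move_n(t)}=1+(q-1)\move_n(t)$ and the $\scrF(t)$-measurability of $Q_n(t)$,
\begin{align*}
	Q_n(t+1)=Q_n(t)\,q^{\move_n(t)}=\Ex\big(Q_n(t+1)\,\big|\,\scrF(t)\big)+(q-1)\,Q_n(t)\,\moveCent_n(t).
\end{align*}
Multiplying by $\Cent(t+1)\dens^{\,n}$ with $n:=\xi+\Drift(t+1)$ and using $\Cent(t+1)=\cent(t)\Cent(t)$, $\Cent(t)\dens^{\,n}Q_n(t)=Z(t,\xi+\drift(t))$, and $\mgg(t,\xi+\drift(t))=\cent(t)(q-1)\moveCent_n(t)$, the last term equals $Z(t,\xi+\drift(t))\,\mgg(t,\xi+\drift(t))$, which is $\Zmg(t+1,t,\xi)$ since $\hk(t+1,t+1,\cdot)=\ind_{\{\,\cdot\,=0\}}$. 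So the one-step case reduces to the identity $\Cent(t+1)\dens^{\,n}\,\Ex(Q_n(t+1)\,|\,\scrF(t))=[\hk(t+1,t)*Z(t)](\xi)$, i.e.\ to a formula for the conditional drift.

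To get that formula, set $p_n:=\Ex(\move_n(t)\,|\,\scrF(t))$ and take conditional expectations in \eqref{eq:moveIter}: this is legitimate since, given $\scrF(t)$, the gaps $g_i(t)$ are frozen and $\move_{n-1}(t)$ depends only on the step-$t$ Bernoullis with index $\le n-1$, hence is independent of $\B_n(t,g_n(t))$ and $\Bp_n(t,g_n(t))$. One gets $(1+\alpha(t))p_n=\alpha(t)(1-q^{g_n(t)})+(\nu+\alpha(t))\,q^{g_n(t)}p_{n-1}$. The key move is to multiply by $Q_n(t)$ and insert $q^{g_n(t)}=Q_{n-1}(t)/Q_n(t)$ (immediate from $g_n=y_{n-1}-y_n-1$), which turns this into the \emph{linear} recursion
\begin{align*}
	(1+\alpha(t))\,p_nQ_n(t)=\alpha(t)\big(Q_n(t)-Q_{n-1}(t)\big)+(\nu+\alpha(t))\,p_{n-1}Q_{n-1}(t).
\end{align*}
Solving this geometric recursion in $n$ (the homogeneous mode being excluded by the boundary behavior: trivially, as $Q_n(t)=0$ for $n<m$, in the right-finite case, and by the decay of $Q_n(t)$ as $n\to-\infty$ under the standing hypotheses in general) yields $\Ex(Q_n(t+1)\,|\,\scrF(t))=\tfrac{1+q\alpha(t)}{1+\alpha(t)}Q_n(t)+c\sum_{j\ge1}\beta^{j}Q_{n-j}(t)$ with $\beta:=\tfrac{\nu+\alpha(t)}{1+\alpha(t)}$ and $c:=\tfrac{(1-q)\alpha(t)(1-\nu)}{(1+\alpha(t))(\nu+\alpha(t))}$. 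Multiplying by $\Cent(t+1)\dens^{\,n}$ and relabeling $n=\xi+\Drift(t+1)$ exhibits this as $[\hk(t+1,t)*Z(t)](\xi)$, where matching coefficients of $Q_{n-k}(t)$ forces $\RW(t)$ to have the geometric law with mass $\cent(t)\tfrac{1+q\alpha(t)}{1+\alpha(t)}$ at $-\drift(t)$ and mass $\cent(t)\,c\,(\beta\dens)^{k}$ at $k-\drift(t)$, $k\ge1$; one then checks, using \eqref{eq:abconst} and \eqref{eq:centt}, that these masses sum to $1$ (so $\cent(t)$ is exactly the right normalization), and, using \eqref{eq:driftt}, that $\RW(t)$ has zero mean, the variance being \eqref{eq:RWvar}.

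Given the one-step identity $Z(t+1,\xi)=[\hk(t+1,t)*Z(t)](\xi)+Z(t,\xi+\drift(t))\mgg(t,\xi+\drift(t))$, the general case \eqref{eq:dSHE} follows by induction on $t_2-t_1$: expand $[\hk(t_2,t_2-1)*Z(t_2-1)]$ via the one-step identity, fuse the drift contributions using the semigroup property $\hk(t_2,s)*\hk(s,t_1)=\hk(t_2,t_1)$ (valid because the $\RW(s)$ are independent across $s$), and peel the martingale contributions into the sum \eqref{eq:Zmg}; the interchanges of summation and conditional expectation are justified by absolute convergence (Lemma~\ref{lem:move} for $\move_n$, and the geometric tails of the $\RW$ kernels with the control on $Z$ for the rest). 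For \eqref{eq:qv}, take $\xi_1\le\xi_2$ (the general case follows by symmetry) and $n_i:=\xi_i+\Drift(t)$. Iterating \eqref{eq:moveIter} from $n_1$ up to $n_2$ writes $\move_{n_2}(t)=\Pi\,\move_{n_1}(t)+\Sigma$ with $\Pi:=\prod_{n_1<i\le n_2}(\Bp_i(t,g_i)-\B_i(t,g_i))$, where $\Pi$ and $\Sigma$ involve only the step-$t$ Bernoullis with index in $(n_1,n_2]$ and so are conditionally independent of $\move_{n_1}(t)$ given $\scrF(t)$; using $\move_{n_1}(t)^2=\move_{n_1}(t)$ and eliminating $\Ex(\Sigma\,|\,\scrF(t))$ through $p_{n_2}=\Ex(\move_{n_2}(t)\,|\,\scrF(t))$,
\begin{align*}
	\Cov\big(\move_{n_1}(t),\move_{n_2}(t)\,\big|\,\scrF(t)\big)=p_{n_1}(1-p_{n_1})\,\Ex(\Pi\,|\,\scrF(t))=p_{n_1}(1-p_{n_1})\,\beta^{\,n_2-n_1}\,\frac{Q_{n_1}(t)}{Q_{n_2}(t)},
\end{align*}
the last step from $\Ex(\Pi\,|\,\scrF(t))=\prod_{n_1<i\le n_2}\tfrac{(\nu+\alpha(t))q^{g_i(t)}}{1+\alpha(t)}$ and the telescoping $\sum_{n_1<i\le n_2}g_i(t)=(y_{n_1}-y_{n_2})-(n_2-n_1)$. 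Plugging this into $Z(t,\xi_1)Z(t,\xi_2)\Ex(\mgg(t,\xi_1)\mgg(t,\xi_2)\,|\,\scrF(t))$ extracts $\big(\tfrac{(\nu+\alpha(t))\dens}{1+\alpha(t)}\big)^{|\xi_1-\xi_2|}$ and leaves $(q-1)^2\cent(t)^2Z(t,\xi_1)^2p_{n_1}(1-p_{n_1})$; and since the drift computation gives $[\hk(t+1,t)*Z(t)](\xi_1-\drift(t))=\cent(t)Z(t,\xi_1)(1+(q-1)p_{n_1})$, the definitions \eqref{eq:qv1}--\eqref{eq:qv2} collapse to $\qv_1(t,\xi_1)=(q-1)\cent(t)Z(t,\xi_1)(1-p_{n_1})$ and $\qv_2(t,\xi_1)=(q-1)\cent(t)Z(t,\xi_1)p_{n_1}$, whose product is exactly the leftover factor (recall $\xi_1=\xi_1\wedge\xi_2$).

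The main obstacle is the drift step of the second paragraph: linearizing the $p_n$ recursion via $q^{g_n}=Q_{n-1}/Q_n$, solving it, and verifying that the resulting coefficients genuinely form a mean-zero probability law on the relevant lattice. It is this verification that simultaneously identifies $\hk$, $\drift$ and $\cent$, and it leans crucially on the precise algebraic relations among $\const$, $a_j$, $b$, $b'$, $\cent$ and $\drift$ recorded in \eqref{eq:abconst}--\eqref{eq:centt}. Everything afterward — the propagation to \eqref{eq:dSHE} and the covariance identity \eqref{eq:qv} — is essentially bookkeeping, modulo routine absolute-convergence checks.
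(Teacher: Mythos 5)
Your proof is correct and follows essentially the same route as the paper: a one-step decomposition of $Q_n(t+1)$ into conditional drift plus martingale increment, identification of the drift as convolution with a geometric kernel that becomes a mean-zero probability law after the $\rho$-tilting by $\cent(t)$, iteration via the semigroup property, and the covariance identity obtained from the product representation of $\move_n$ together with $\move_n^2=\move_n$ and conditional independence. The only cosmetic differences are that you compute $\Ex(\move_n(t)\mid\scrF(t))$ by solving the linear recursion coming from \eqref{eq:moveIter} rather than summing the explicit series \eqref{eq:moveEx} (the same computation in two guises), and that you reverse-engineer the kernel $\hk$ by matching coefficients rather than defining it a priori and verifying the normalization, mean and variance via the generating function.
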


\begin{remark}\label{rmk:duality}
The first indication that 
a microscopic Hopf-Cole transform as in Proposition~\ref{prop:dSHE} should exist 
came from the $ k=1 $ version of the Markov duality enjoyed by the model, 
given in \cite[Theorem~2.19]{corwin15}.
This result shows that $ \Ex(q^{y_n(t)+n}) $ 
satisfies the Kolmogorov backward equation in the $ t $ and $ n $ variables,
more explicitly 
\begin{align*}
	\Ex(q^{y_n(t+1)+n}) = \sum\nolimits_{m\in\bbZ} \hkp(t+1,t,n-m) \Ex(q^{y_{m}(t)+m}).
\end{align*}
Here $ \hkp(t+1,t,m) $ is the transition probability 
of a certain (time inhomogeneous) random walk $ \{X'(t)\}_{t\in\N} $, 
defined as in \eqref{eq:RWt},
which corresponds to the one-particle version of the 
Higher Spin Zero Range Process, defined in \cite[Definition 2.6]{corwin15}.
The existence of a nice martingale as in \eqref{eq:qv}
and finding the correct centering and tiling as in \eqref{eq:Z}
require further work given here in Proposition~\ref{prop:dSHE}.
\end{remark}

Proceeding to our main result,
we consider the weak noise scaling $ q=\qe := e^{-\e} $, $ \e\to 0 $.
Hereafter throughout the paper,
we \emph{fix} $\alpha>0$, $\nu\in[0,1)$, and $\dens\in (0,1)$,
and scale only the parameter $ \qe \to 1 $.
To indicate this scaling, we denote \emph{parameters} such as $ \alpha_j $ and $ \alpha(s) $
by $ \alphaej{j} $ and $ \alphae(s) $,
but for \emph{processes} such as $ \vecx(t) $, $ \B_n(s,g) $,
we often omit the dependence on $ \e $ to simplify notations.
Under this scaling, to the first order \eqref{eq:driftt}--\eqref{eq:centt} read
\begin{align}
	&
	\label{eq:driftte}
	\drifte(t) =
	\e \alpha\const (1+\alpha\const)^{-2}(b-b')^{-1} + O(\e^{2}),
\\
	&
	\label{eq:centte}
	\cente(t) = 1 + \e \alpha \const (1+\alpha\const)^{-1} + O(\e^{2}).
\end{align}
Let $ \xcent := (b-b')^{-1} $,
\begin{align}
	\label{eq:tcent}
	\tcente
	&:= \sqBK{ (a_{0})^2 -(\aej{J})^2 + (a_0-\aej{J})(b+b') }^{-1}
\\
	\label{eq:tcentApprox}	
	&=
	(1+\alpha\const)^2(J\alpha\const)^{-1}(2 a_0+b+b')^{-1} + O(\e).
\end{align}
Note that here $ a_0 = \alpha\const(1+\alpha\const)^{-1} $ is independent of $ \e $.
We extend the process $ Z(t,\xi) $, defined for $ t\in\N $ and $ \xi\in\Xi(t) $,
to a continuous process on $ \bbR_+\times\bbR $
by first linearly interpolate in $ \xi $ and then linearly interpolate in $ t $,
and then we introduce the scaled process
\begin{align}\label{eq:Ze}
	\Ze(\tau,r) := Z(\e^{-3}\tcente J \tau, \e^{-1} \xcent r),
\end{align}
or, equivalently $ \Ze(\tau,r) = \exp(H_\e(\tau,r)) $, where
\begin{align}\label{eq:He}
%\begin{split}
	H_\e(\tau,r)
	:=&
	-\e  y_{\ne(\tau,r)}(\te(\tau))
	+ \BK{\log\rho -\e} \ne(\tau,r)
%\\
%	&
	+ \log\Cente(\te(\tau)),
%\end{split}
\end{align}
$ \te(\tau) :=  \e^{-3}\tcente J\tau $ and $ \ne(\tau,r) := \e^{-1}\xcent r + \Drifte(\te(\tau)) $.
Following \cite{bertini97}, we consider \emph{near equilibrium} initial conditions:
\begin{definition}\label{def:neareq}
Let $ \Ze(0,\xi) $ be the exponential transform (given as in \eqref{eq:Ze})
associated with $ \{\vecxe(0)\}_\e\subset\Xsp $.
We say $ \{\vecxe(0)\}_\e\subset\Xsp $
is \emph{near equilibrium} if, given any $ k\in\bbZ_{>0} $ and $ v\in(0,1/2) $,
there exists $ u=u(k,v), C=C(k,v)<\infty $ such that
\begin{align}
	&
	\label{eq:neareq:mom}
	\VertBK{\Ze(0,\xi)}_{k} := \BK{ \Ex \BK{\Ze(0,\xi)^k} }^{1/k}
	\leq C e^{u|\xi|},
\\
	&
	\label{eq:neareq:momx}
	\VertBK{\Ze(0,\xi)-\Ze(0,\xi')}_k \leq C|\xi-\xi'|^v e^{u(|\xi|+|\xi'|)},	
\end{align}
for all $ \xi,\xi'\in \e(\xcent)^{-1}\bbZ $ and $ \e>0 $ small enough.
\end{definition}
\noindent
Hereafter we endow the spaces $ C(\bbR) $, $ C(\bbR_{+}\times\bbR) $ and $ C((0,\infty)\times\bbR) $
the topology of uniform convergences on compact subsets,
and use $ \Rightarrow $ to denote weak convergence of probability laws.
The following is our main result.
\begin{theorem}\label{thm:main}
Let $ \Zlim $ be the unique $ C(\bbR_+\times\bbR) $-valued solution of \ac{SHE}
starting from a $ C(\bbR) $-valued process $ \Zic $,
and let $ \Ze(\tau,r)\in C(\bbR_+\times\bbR) $ be as in \eqref{eq:Ze},
with some near equilibrium initial condition $ \{\vecxe(0)\}_\e $.
If $ \Ze(0,\Cdot) \Rightarrow \Zic(\Cdot)  $,
then
\begin{align*}
	\Ze(\Cdot,\Cdot) \Rightarrow \Zlim(\Cdot,\Cdot)
	\text{ on }
	C(\bbR_+\times\bbR),
	\quad
	\text{ as } \e \to 0.
\end{align*}
\end{theorem}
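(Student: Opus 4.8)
The plan is to follow the now-standard ``martingale problem'' route for establishing convergence to the \ac{SHE}, in the spirit of \cite{bertini97,amir11,dembo13}, exploiting the discrete \ac{SHE} of Proposition~\ref{prop:dSHE} as the microscopic Hopf--Cole transform. First I would observe that the statement factors cleanly into two independent pieces: (i) \emph{tightness} of the laws of $\Ze(\Cdot,\Cdot)$ in $C(\bbR_+\times\bbR)$, and (ii) \emph{identification} of every subsequential limit as \emph{the} mild solution $\Zlim$ of \eqref{eq:SHEmild} with initial data $\Zic$. For (i), the hypotheses on the near-equilibrium initial condition, \eqref{eq:neareq:mom}--\eqref{eq:neareq:momx}, give the $\e=0$ input; I would propagate them in $(\tau,r)$ using the moment estimates of Section~\ref{sect:mom} (the ones the excerpt advertises) together with the Duhamel-type decomposition $Z(t_2,\xi)=\Zdr(t_2,t_1,\xi)+\Zmg(t_2,t_1,\xi)$ of \eqref{eq:dSHE}. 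The drift part $\Zdr$ is handled by the random-walk semigroup $\hk(t_2,t_1,\Cdot)$, whose local CLT asymptotics under the scaling \eqref{eq:Ze} converge to the heat kernel $P_\tau$; the martingale part $\Zmg$ is controlled by \ac{BDG} applied to its discrete quadratic variation, for which \eqref{eq:qv}--\eqref{eq:qv2} furnish the exact conditional second moment of the noise $\mgg$. Combining the spatial regularity bound \eqref{eq:neareq:momx}-type estimates with the parabolic scaling yields, via Kolmogorov's criterion, Hölder moment bounds of the form $\|\Ze(\tau,r)-\Ze(\tau',r')\|_k\le C(|\tau-\tau'|^{a}+|r-r'|^{b})e^{u(|r|+|r'|)}$ uniformly in small $\e$, which is exactly tightness on $C(\bbR_+\times\bbR)$ (with the uniform-on-compacts topology).

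For (ii), the core is to show that any limit point $\Zlim$ satisfies the martingale problem for \eqref{eq:SHE}: for suitable test functions $\varphi$, both
\[
\mg_\tau(\varphi):=\angleBK{\Zlim(\tau),\varphi}-\angleBK{\Zic,\varphi}-\int_0^\tau \angleBK{\Zlim(\tau'),\tfrac12\delta\,\partial_r^2\varphi}\,d\tau'
\]
and $\mg_\tau(\varphi)^2-\frac{\kappa^2}{\delta^2}D\int_0^\tau\angleBK{\Zlim(\tau')^2,\varphi^2}\,d\tau'$ are continuous martingales, which by the uniqueness of \cite[Proposition~2.5]{corwin12} pins down $\Zlim$ as the mild solution. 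At the discrete level this is carried out in Section~\ref{sect:mg}: rewrite \eqref{eq:dSHE} as a telescoping sum so that $Z(t+1,\xi)-Z(t,\xi)$ equals a discrete-Laplacian term (coming from $\hk$, since $\RW(s)$ has mean zero and variance \eqref{eq:RWvar}) plus the genuine martingale increment $[\hk*(Z(s)W(s))](\xi+\drift(s))$. Summing against $\varphi$ and passing to the limit, the discrete Laplacian converges to $\tfrac12\delta\,\partial_r^2$ with $\delta=\xcent^{\,2}/(\tcente J)$-type constant read off from \eqref{eq:RWvar}, \eqref{eq:tcent}; and the bracket of the martingale part converges — by \eqref{eq:qv} and the key ``replacement'' that $\qv_1,\qv_2$ are each comparable to $Z$ itself up to lower-order corrections — to $\frac{\kappa^2}{\delta^2}D\int_0^\tau\angleBK{\Zlim^2,\varphi^2}$. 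Crucially, \eqref{eq:qv} shows the off-diagonal noise correlation decays geometrically like $\big(\tfrac{(\nu+\alpha)\dens}{1+\alpha}\big)^{|\xi_1-\xi_2|}$, which under $\xi=\e^{-1}\xcent r$ collapses to a Dirac mass in $r$ — this is precisely the mechanism delivering white noise in the limit.

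The main obstacle — as in every weak-KPZ proof since \cite{bertini97} — is the \textbf{quadratic-variation identification}: showing that $Z(s,\xi)^2\,\Ex[\mgg(s,\xi)^2\mid\scrF(s)]$ can be replaced, in the relevant averaged sense, by a constant multiple of $Z(s,\xi)^2$. From \eqref{eq:qv}--\eqref{eq:qv2} with $\xi_1=\xi_2=\xi$, the conditional variance equals $\qv_1(t,\xi)\qv_2(t,\xi)$, and $\qv_1=q\cente Z-[\hk*Z]$, $\qv_2=-\cente Z+[\hk*Z]$; so $\qv_1\qv_2 = -(q\cente Z-[\hk*Z])(\cente Z-[\hk*Z])$, which to leading order is $(\text{const})\cdot Z^2$ \emph{plus} a ``gradient'' term involving $[\hk*Z]-\cente Z$ that is of size $\e^{1/2}$ individually but must be shown to vanish after the space-time average against the heat kernel. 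This is the genuine ``Boltzmann--Gibbs''/second-order-Boltzmann-Gibbs type step: one needs a one-block / two-blocks estimate, or here more likely a direct moment computation using the explicit Bernoulli structure of $\B_n,\Bp_n$ and the duality identity behind $Q_n(t)=q^{y_n(t)+n}$, to show the cross terms are negligible. I would isolate this as a standalone lemma (``key estimate'') bounding $\Ex\big|\int_0^\tau [\hk(t_2,s+1)*((\qv_1\qv_2)(s)-cZ(s)^2)](\xi)\,ds\big|^k \to 0$, and expect its proof to consume the bulk of Section~\ref{sect:mg}. Everything else — tightness, the heat-kernel limit of $\hk$, the martingale-to-white-noise passage, and invoking uniqueness — is comparatively routine given the moment bounds of Section~\ref{sect:mom} and the algebraic identities already packaged in Proposition~\ref{prop:dSHE}.
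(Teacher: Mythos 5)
Your proposal is correct and follows the same architecture as the paper: Theorem~\ref{thm:main} is proved by combining tightness (Proposition~\ref{prop:tight}, established in Section~\ref{sect:mom} exactly as you describe, via the Duhamel decomposition \eqref{eq:dSHE}, Burkholder for $\Zmg$, the heat-kernel bounds, and Kolmogorov--Chentsov) with identification of limit points through the Bertini--Giacomin martingale problem (Proposition~\ref{prop:unique}, Section~\ref{sect:mg}), invoking uniqueness for the martingale problem to conclude. The one place where you diverge is your assessment of the quadratic-variation step. You anticipate that $\qv_1\qv_2$ contains a ``gradient'' term $[\hk*Z]-\cente Z$ of size $\e^{1/2}$ whose removal would require a one-block/two-block or second-order Boltzmann--Gibbs argument occupying most of Section~\ref{sect:mg}. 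In fact no such estimate is needed here, and this is precisely what the product structure of the exact identity \eqref{eq:qv} buys: each factor $\qv_i$ is \emph{individually} of order $\e Z$, because (a) the one-step kernel $\hke(t+1,t,\cdot)$ places mass $1-O(\e)$ at the drift-shifted origin, so the pseudo-generator $\genps$ has total mass zero and pointwise size $O(\e)$ with geometric tails \eqref{eq:genBd}, and (b) the a priori spatial H\"older moment bound \eqref{eq:momx1} (already proved for tightness) converts $[\genps*Z](\zeta)$ into $O(\e)\cdot(\text{error})$. This is the content of the short Lemma~\ref{lem:qvApprox}, which yields $\qv_1\qv_2=\text{const}\cdot\e^2(Z^2+\er)$ directly; the geometric off-diagonal decay then produces the white noise as you say. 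So your plan would work, but the standalone ``key estimate'' you propose to isolate is dissolved by algebra plus the moment bounds rather than requiring a replacement lemma; the contrast with \ac{ASEP} is that there the microscopic quadratic variation genuinely contains $(\nabla Z)^2$, whereas here the duality-derived covariance \eqref{eq:qv} already factors it away.
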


Definition~\ref{def:neareq} (and therefore Theorem~\ref{thm:main})
leaves out an important initial condition,
i.e.\ the step initial condition: $ x_n(0) := {-n} $ for $ n\in\N $ and $ x_{n}=\infty $ for $ n\in\bbZ_{<0} $.
Following \cite{amir11}, we generalize Theorem~\ref{thm:main} to the following:

\begin{theorem}\label{thm:step}
Let $ \Zdlim(\Cdot,\Cdot) $ be the unique solution of \ac{SHE}
starting from the delta measure $ \delta(\Cdot) $,
let $ \{\vecy(t)\}_{t} \in \Xsp_0 $ be the process starting from the step initial condition,
and let $ \Zde(\tau,r) := \e^{-1}(1-\rho)\xcent\Ze(\tau,r) $.
Then
\begin{align*}
	\Zde(\Cdot,\Cdot) \Rightarrow \Zdlim(\Cdot,\Cdot)
	\text{ on }
	C((0,\infty)\times\bbR),
	\quad
	\text{ as } \e \to 0.
\end{align*}
\end{theorem}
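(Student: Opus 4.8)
The plan is to follow the strategy of \cite{amir11}: establish moment and regularity estimates and tightness for the process started from the step datum, pass to subsequential limits via the martingale-problem analysis of Section~\ref{sect:mg}, and then identify the limiting initial condition as the delta measure by a short-time first- and second-moment computation. For the step configuration $ x_n(0)=-n $ we have $ \Drifte(0)=0 $ and $ \Cente(0)=1 $, so \eqref{eq:Z} gives the explicit \emph{deterministic} initial datum $ Z(0,\xi)=\dens^{\xi}\ind_{\{\xi\geq 0\}} $ for $ \xi\in\bbZ $. Under the scaling \eqref{eq:Ze} this concentrates on an $ \e $-scale window near the origin, so $ \{\vecy(0)\}_\e $ is \emph{not} near equilibrium in the sense of Definition~\ref{def:neareq}, Theorem~\ref{thm:main} does not apply directly, and the uniform exponential bounds \eqref{eq:neareq:mom}--\eqref{eq:neareq:momx} must be replaced by bounds that blow up as $ \tau\downarrow 0 $ at the rate dictated by the narrow-wedge solution of the \ac{SHE}.

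\emph{A priori estimates and tightness.} Feeding $ Z(0,\cdot)=\dens^{\Cdot}\ind_{\{\Cdot\geq 0\}} $ and $ t_1=0 $ into the discrete \ac{SHE} \eqref{eq:dSHE}, and iterating exactly as in Section~\ref{sect:mom} --- using the two-point bracket identity \eqref{eq:qv}--\eqref{eq:qv2} to control the martingale increments $ \mgg $, together with the local central limit estimates for the semigroup $ \hk $ --- one expects heat-kernel type bounds of the form $ \VertBK{\Zde(\tau,r)}_k\leq C\tau^{-a_k}e^{-cr^2/\tau} $, valid uniformly in $ \e $ on $ [\tau_0,T]\times\bbR $ for any $ \tau_0>0 $, together with the corresponding Hölder bound in $ (\tau,r) $ (and a separate elementary estimate in the microscopic regime $ \tau\lesssim\e^{3} $, where $ \Zde $ lives on scale $ \e $). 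The only new feature relative to Section~\ref{sect:mom} is tracking the $ \tau^{-a_k} $ singularity carried by the concentrated datum $ Z(0,\cdot) $; the Gronwall/iteration scheme is otherwise unchanged. Kolmogorov's criterion then gives tightness of $ \{\Zde\}_\e $ in $ C((0,\infty)\times\bbR) $, the open endpoint $ \tau=0 $ being forced by the blow-up. Moreover the martingale-problem analysis of Section~\ref{sect:mg}, which concerns only the dynamics, shows that every subsequential limit $ \Zdlim' $ is, for each fixed $ \tau_1>0 $, a mild solution of \eqref{eq:SHEmild} on $ [\tau_1,\infty)\times\bbR $ started from $ \Zdlim'(\tau_1,\Cdot) $.

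\emph{Identifying the initial condition.} It remains to show that $ \Zdlim'(\tau_1,\Cdot)\to\delta(\Cdot) $ as $ \tau_1\downarrow 0 $; letting $ \tau_1\downarrow 0 $ in the mild formulation \eqref{eq:SHEmild} on $ [\tau_1,\infty) $ (using the a priori bounds to pass the stochastic integral to the limit) then identifies $ \Zdlim' $ with the \ac{SHE} solution from $ \delta(\Cdot) $, i.e.\ $ \Zdlim'=\Zdlim $, which yields the full convergence. Since $ \mgg $ is a martingale increment and $ Z(0,\cdot) $ is deterministic, the drift part of \eqref{eq:dSHE} gives $ \Ex[\Zde(\tau_1,r)]=\e^{-1}(1-\dens)\xcent\sum_{\zeta\geq 0}\hk(\te(\tau_1),0,\e^{-1}\xcent r-\zeta)\,\dens^{\zeta} $; the local central limit theorem for the zero-mean random walk $ \RW(0)+\cdots+\RW(\te(\tau_1)-1) $ (with the diffusive normalization underlying \eqref{eq:tcent} and \eqref{eq:Ze}), together with $ \sum_{\zeta\geq 0}\dens^{\zeta}=(1-\dens)^{-1} $, shows the right side converges to $ P_{\tau_1}(r) $ --- this is exactly the computation that pins down the normalization constant $ \e^{-1}(1-\dens)\xcent $. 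Hence $ \Ex\angleBK{\Zde(\tau_1,\Cdot),\phi}\to\angleBK{P_{\tau_1},\phi}\to\phi(0) $ for $ \phi\in C_c(\bbR) $. For the fluctuations, the second-moment bookkeeping of Section~\ref{sect:mom}, built on \eqref{eq:qv}--\eqref{eq:qv2}, bounds $ \Var\angleBK{\Zde(\tau_1,\Cdot),\phi} $ by a space-time integral of schematic form $ \int_0^{\tau_1}\!\!\int_{\bbR^3}\phi(x)\phi(y)\,P_{\tau_1-s}(x-z)P_{\tau_1-s}(y-z)\,\Ex[\Zde(s,z)^2]\,dx\,dy\,dz\,ds $, and the a priori bounds give $ \int_{\bbR}\Ex[\Zde(s,z)^2]\,dz\lesssim s^{-1/2} $ uniformly in $ \e $, an integrable singularity at $ s=0 $; so this variance is $ O(\tau_1^{1/2}) $ uniformly in $ \e $ and vanishes as $ \tau_1\downarrow 0 $. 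Thus $ \angleBK{\Zde(\tau_1,\Cdot),\phi}\to\phi(0) $ in probability as $ \e\to 0 $ and then $ \tau_1\downarrow 0 $, which is the required identification.

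\emph{Main obstacle.} I expect the delicate point to be this last step: showing that the short-time variance is not merely bounded but genuinely small as $ \tau_1\downarrow 0 $, \emph{uniformly in} $ \e $. This is the regime where the a priori bounds are weakest --- near the singular initial time, and including the microscopic window $ \tau\lesssim\e^{3} $ where $ \Zde $ is concentrated on scale $ \e $ --- so one needs the sharp form of the discrete martingale bracket furnished by Proposition~\ref{prop:dSHE} and careful control of the $ \e $-scale concentration of the step datum. The remaining ingredients --- the moment iteration, the Hölder estimates, tightness, and the martingale characterization of limit points --- are straightforward adaptations of the near equilibrium arguments of Sections~\ref{sect:mom}--\ref{sect:mg}.
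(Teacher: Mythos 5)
Your proposal is correct and follows essentially the same route as the paper: the paper reduces Theorem~\ref{thm:step} to Theorem~\ref{thm:main} plus the argument of \cite[Section 3]{amir11} (which is exactly the restart-at-$\tau_1$ and $\tau_1\downarrow 0$ identification you spell out), together with the moment estimates you anticipate, which appear verbatim as Proposition~\ref{prop:stepEst} and are proved in Section~\ref{sect:mom} by feeding the normalized step datum \eqref{eq:Zdmass} into the chaos-series bound \eqref{eq:Zchaos} and the heat-kernel estimates. The $(\e^3\tau)^{-1/2}$ singularity and the H\"older bound \eqref{eq:momdx} are precisely the "heat-kernel type bounds blowing up at the narrow-wedge rate" that you identified as the key new ingredient.
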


\begin{remark}\label{rmk:dens}
By Theorem~\ref{thm:main} we have that $ H_\e(\tau,r)-H_\e(\tau,r+\e/\xcent) \to 0 $ in probability.
Plugging this in \eqref{eq:He}, a posteriori we find that,
$ y_{\ne(\tau,r)}(\te(\tau)) - y_{\ne(\tau,r)+1}(\te(\tau)) \approx \e^{-1} \log(1/\dens) $,
or equivalently the limiting density is $ \e/\log(1/\dens) $.
\end{remark}

Hereafter we adapt the convention that
$ m,n,i,j,k\in\bbZ $; $ s,t\in\N $; $ \tau,\tau'\in\bbR_+ $; and $ r\in\bbR $.
To simplify notation,
we let $ \vecg(t) := \vecg(\vecy(t)) $, $ \B_n(t):=\B_n(t,\vecg(t)) $, $ \Bp_n(t):=\Bp_n(t,\vecg(t)) $,
$ \move_n(t):= \move_n(t,\vecg(t)) $ and $ \moveI_{n,m}(t):= \moveI_{n,m}(t,\vecg(t)) $,
with the consensus that an underlying process $ \vecy(t) $ has been fixed.
We will specify explicitly when a result applies only for near equilibrium initial
conditions or the step initial condition,
and without specification the result holds for any initial condition $ \vecx(0)\in\Xsp $.

%With $ \vecx(t) = \vecy(tJ) $, to prove Theorem~\ref{thm:main},
%it suffices to prove the analogous results of $ \vecy(t) $.
%More precisely, letting
%\begin{align}
%%	&
%%	\label{eq:driftt}
%%	\drift(t) :=
%%	(a_{\rmd(t)}-a_{\rmd(t)+1})(b-b')^{-1},
%%\\
%%	&
%%	\label{eq:centt}
%%	\cent(t) := (a_{\rmd(t)+1}/a_{\rmd(t)}) \dens^{\drift(t)},
%\\
%	&
%	\label{eq:tcents}
%	\vare(t) := (\aej{\rmd(t)})^2-(\aej{\rmd(t)+1})^2+\drifte(t)(b^2-(b')^2),
%\end{align}
%$ \Cente(t) := \prod_{s=0}^{t-1} \cente(s) $,
%$ \Drifte(t) := \sum_{s=0}^{t-1} \drifte(s) $
%and $ Q_n(t) := \qe^{y_n(t)+n} $ denotes the one-particle duality function,
%we set
%\begin{align}
%%	&
%%	\label{eq:Z}
%%	Z(t,\xi) := \Cente(t) \dens^{\xi+\Drifte(t)} Q_{ \xi+\Drifte(t) }(t),
%%\\
%	&
%	\label{eq:Zde}
%	\Zde(t,\xi) := \xcent \e^{-1}(1-\dens) \Ze(t,\xi).
%\end{align}
%for $ t\in\N $ and $ \xi\in\Xi(t) := \bbZ-\Drifte(t) $,
%and similar extend $ Z(t,\xi) $ to $ \bbR_+\times\bbR $ by first linearly interpolate in $ \xi $
%and then in $ t $.
%Let $ \Ze(\tau,r) := Z(\e^{-3}\tcente \tau J,\e^{-1}\xcent r) $.
%With $ Z(\tau,r) = \Zforx(\tau,r) $,
%for all $ \tau \in \e^3(\tcente)^{-1}\N $ and all $ r\in\bbR $,
%Theorem~\ref{thm:main} follows immediately from
%
%
%\begin{proposition}\label{prop:main}
%Let $ \Zlim $ and $ Z^\ic $ be as in Theorem~\ref{thm:main}.
%For near equilibrium initial conditions, if $ \Ze(0,\Cdot) \Rightarrow Z^\ic(\Cdot) $,
%then $ \Ze \Rightarrow \Zlim $ in $ C(\bbR_+,\bbR) $,
%as $ \e \to 0 $.
%\end{proposition}
%
%Proposition~\ref{prop:dSHE} is established in Section~\ref{sect:dSHE}.

\begin{proof}[Proof of Theorem~\ref{thm:main}]
This is an immediate consequence of the following propositions,
which we establish in Section~\ref{sect:mom} and \ref{sect:mg}, respectively.
\begin{proposition}\label{prop:tight}
For near equilibrium initial conditions,
the collection of processes $ \{\Ze\}_\e $ is tight in $ C(\bbR_+\times\bbR) $.
\end{proposition}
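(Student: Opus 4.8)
The plan is to run the heat-kernel/martingale scheme of Bertini and Giacomin \cite{bertini97}, taking the discrete \ac{SHE} \eqref{eq:dSHE} of Proposition~\ref{prop:dSHE} as the only input from the microscopic model. Tightness in $C(\bbR_+\times\bbR)$ will follow from the Kolmogorov--Chentsov criterion once we establish two families of moment estimates, uniform in small $\e$: (i) for every $k\in\bbZ_{>0}$ and $T<\infty$ there are $C,u<\infty$ with $\VertBK{\Ze(\tau,r)}_k\le Ce^{u|r|}$ for all $\tau\in[0,T]$, $r\in\bbR$; and (ii) for $\theta<\tfrac14$ (attained by taking $k$ large) there are $C,u<\infty$ with $\VertBK{\Ze(\tau,r)-\Ze(\tau',r')}_k\le C\big(|\tau-\tau'|^{\theta}+|r-r'|^{2\theta}\big)e^{u(|r|+|r'|)}$ on compact time sets. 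The constants inherit their shape from the near equilibrium hypotheses \eqref{eq:neareq:mom}--\eqref{eq:neareq:momx}, and the two linear interpolations in $\xi$ and then in $t$ built into \eqref{eq:Ze} are $1$-Lipschitz and do not degrade these bounds.

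First I would prove (i) by exploiting the martingale hidden in \eqref{eq:dSHE}: for fixed $t_2$, the process $t_1\mapsto M(t_1):=[\hk(t_2,t_1)*Z(t_1)](\xi)$ is an $\scrF(t_1)$-martingale for $t_1\le t_2$, with $M(0)=\Zdr(t_2,0,\xi)$, $M(t_2)=Z(t_2,\xi)$, and increments $M(s+1)-M(s)=[\hk(t_2,s+1)*(Z(s)W(s))](\xi+\drifte(s))$, which are $\scrF(s)$-conditionally mean zero because $\moveCent_n(s)$ is. The initial value $M(0)$ is the initial data convolved with the semigroup \eqref{eq:hk} of the inhomogeneous, mean-zero walk $\RW(0)+\cdots+\RW(t_2-1)$; since this kernel is a probability measure with Gaussian off-diagonal decay, \eqref{eq:neareq:mom} bounds $\VertBK{M(0)}_k$ in the $e^{u|\cdot|}$-weighted sup-norm after a slight inflation of $u$. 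Applying \ac{BDG} and then the conditional second-moment identity \eqref{eq:qv}, the bracket $\langle M\rangle_{t_2}=\sum_{s<t_2}\Ex[(M(s+1)-M(s))^2|\scrF(s)]$ is controlled, up to off-diagonal terms summable via the geometric factor $\big(\tfrac{(\nu+\alphae(t))\dens}{1+\alphae(t)}\big)^{|\xi_1-\xi_2|}$, by
\[
	\sum_{s=0}^{t_2-1}\sum_{\zeta}\hk(t_2,s+1,\xi+\drifte(s)-\zeta)^2\,\qv_1(s,\zeta)\,\qv_2(s,\zeta).
\]
Since $\mgg(t,\xi)=\cente(t)(q-1)\moveCent_{\xi+\Drifte(t)}(t)$ with $|q-1|=\e+O(\e^2)$ and $|\moveCent|\le1$, each $\qv_1\qv_2=Z(s,\zeta)^2\,\Ex[\mgg(s,\zeta)^2|\scrF(s)]$ is $O(\e^2)Z(s,\zeta)^2$; together with the local-CLT estimate $\VertBK{\hk(t_2,s+1,\cdot)}_{\ell^2}^2\le C((t_2-s)\e)^{-1/2}$ and $\sum_{s<t_2}((t_2-s)\e)^{-1/2}\le C\e^{-1/2}\sqrt{t_2}\le C\e^{-2}$ on the horizon $t_2\lesssim\e^{-3}T$, the bracket is $\le C\sum_{s}w(t_2,s)\sup_{\zeta}\VertBK{Z(s,\zeta)}_{2k}^2$ for a summable kernel $w$, and a discrete Gronwall inequality closes (i); the weight $e^{u|r|}$ is carried by the Gaussian tails of $\hk$.

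For (ii) I would difference \eqref{eq:dSHE} and reuse the same decomposition: the spatial increment of $\Zdr$ reduces to a discrete-gradient estimate on $\hk$ against the initial data via \eqref{eq:neareq:momx}, and the temporal increment uses that $\hk(t_2',0,\cdot)-\hk(t_2,0,\cdot)$ is small in $\ell^1$ with a local-CLT rate; the martingale increments are handled by \ac{BDG} together with \eqref{eq:qv}, now estimating $\ell^2$-differences of the kernels $\hk(t_2,s+1,\cdot)$ in the space and time-endpoint variables and, for a temporal increment, the extra block of increments $\sum_{s=t_1}^{t_1'}$, with $\VertBK{Z(s,\zeta)}_{2k}$ controlled by (i). With (i)--(ii) in hand, Kolmogorov--Chentsov yields tightness on each $C([0,T]\times[-L,L])$, and the $e^{u|r|}$ weights (which make $\sup_{|r|\le L}\Ze$ negligible as $L\to\infty$, uniformly in $\e$) upgrade this to tightness in $C(\bbR_+\times\bbR)$ with the topology of uniform convergence on compacts.

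The step I expect to be the real obstacle is the uniform-in-$\e$ local central limit theorem, and its discrete gradient and time-difference refinements, for the walk $\RW(0)+\cdots+\RW(t-1)$ on the scale $t\sim\e^{-3}$, $\xi\sim\e^{-1}$: its step laws are periodic mod $J$ and depend on $\e$ through \eqref{eq:driftte}--\eqref{eq:centte}, each step has variance of order $\e$, and one must show that the cumulative variance grows like a constant times $t$ with the constant pinned down by $\tcente$ in \eqref{eq:tcent}, with Gaussian upper bounds carrying precisely the powers of $\e$ that make the balance ``$\e^{-3}$ steps $\times$ $\e^2$ per-step quadratic variation $\times$ $\ell^2$-kernel mass summing to $\e^{-2}$'' come out $O(1)$. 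Everything else is a lengthy but fairly mechanical adaptation of the estimates of \cite{bertini97}.
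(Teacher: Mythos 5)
Your proposal is correct and follows essentially the same route as the paper: the discrete \ac{SHE} decomposition, Burkholder/\ac{BDG} on the martingale part combined with the covariance identity \eqref{eq:qv} (whose geometric off-diagonal factor collapses the double sum, exactly as you note), uniform-in-$\e$ local-CLT heat-kernel bounds proved by Fourier inversion, a singular-kernel Gronwall iteration (written in the paper as an explicit chaos-type series), and Kolmogorov--Chentsov. The only cosmetic differences are that the paper bounds $\qv_1,\qv_2$ individually by $C\e Z$ via a pseudo-generator expansion where you bound the product $\qv_1\qv_2\le C\e^2Z^2$ directly from $|\mgg|\le C\e$ and the diagonal case of \eqref{eq:qv}; both work.
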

\begin{proposition}\label{prop:unique}
For near equilibrium initial conditions,
any limiting point $ \Zlim$ of $ \{\Ze\}_\e $ solves the \ac{SHE}.
\end{proposition}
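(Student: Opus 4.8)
The plan is to characterize any subsequential limit $\Zlim$ through the martingale problem associated with the mild formulation \eqref{eq:SHEmild}, following the strategy of \cite{bertini97,amir11}. Fix a test function $\phi\in C^\infty_c(\bbR)$ and write $\langle f,\phi\rangle$ both for the $L^2(\bbR)$ pairing and for the Riemann-sum pairing of a discrete field against $\phi$ in the $r$-coordinate (with spacing $\e\xcent^{-1}$). By Proposition~\ref{prop:tight} and Skorokhod's representation theorem we may pass to a subsequence along which $\Ze\to\Zlim$ almost surely in $C(\bbR_+\times\bbR)$. By the standard equivalence between the martingale problem and \eqref{eq:SHEmild} (see \cite{bertini97}) together with the uniqueness in \cite[Proposition~2.5]{corwin12}, it then suffices to produce constants $\delta>0$ and $\bar D>0$ (agreeing with those of \eqref{eq:SHE} under the present scaling) for which
\begin{align*}
	\mathcal N_\tau(\phi):=\langle\Zlim(\tau),\phi\rangle-\langle\Zlim(0),\phi\rangle-\tfrac12\delta\int_0^\tau\langle\Zlim(u),\phi''\rangle\,du
\end{align*}
is a continuous martingale with predictable bracket $\langle\mathcal N(\phi)\rangle_\tau=\bar D\int_0^\tau\langle\Zlim(u)^2,\phi^2\rangle\,du$ (and the corresponding cross-brackets for $\phi_1\ne\phi_2$, obtained by polarization); the identity $\langle\Zlim(0),\phi\rangle=\langle\Zic,\phi\rangle$ is immediate from the hypothesis $\Ze(0,\Cdot)\Rightarrow\Zic$.

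The microscopic input is Proposition~\ref{prop:dSHE} with $t_1=s$, $t_2=s+1$, for which \eqref{eq:Zmg} contains the single term $Z(s,\xi+\drift(s))\,\mgg(s,\xi+\drift(s))$; thus $Z(s+1,\xi)-[\hk(s+1,s)*Z(s)](\xi)=Z(s,\xi+\drift(s))\,\mgg(s,\xi+\drift(s))$ is an $\scrF(s+1)$-measurable increment with conditional mean zero given $\scrF(s)$, because $Z(s,\Cdot)$, $\hk(s+1,s)$ and $\drift(s)$ are $\scrF(s)$-measurable while $\Ex[\mgg(s,\Cdot)\mid\scrF(s)]=0$ by \eqref{eq:mgg}. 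Summing over $s<\te(\tau)$ and pairing with $\phi$ yields an $\scrF$-martingale
\begin{align*}
	\mathcal M^\e_\tau(\phi)
	&:=\langle Z(\te(\tau)),\phi\rangle-\langle Z(0),\phi\rangle-\sum_{s<\te(\tau)}\big\langle Z(s),(\hk(s+1,s)^*-1)\phi\big\rangle\\
	&=\sum_{s<\te(\tau)}\big\langle Z(s,\Cdot+\drift(s))\,\mgg(s,\Cdot+\drift(s)),\phi\big\rangle,
\end{align*}
with integer parts and the interpolation in $t$ handled in the routine way. For the drift sum I would use a local limit theorem with error control for the walk $\RW(0)+\cdots+\RW(t-1)$: since each $\RW(s)$ has mean zero and variance of order $\e$ by \eqref{eq:RWvar}, a second-order Taylor expansion gives $(\hk(s+1,s)^*-1)\phi=\tfrac12(\e\xcent^{-1})^2\Var(\RW(s))\,\phi''+(\text{higher order})$ (the first-order term vanishing by the mean-zero property), and converting the $\approx\e^{-3}\tcente J\tau$ Riemann sums into integrals yields $\tfrac12\delta\int_0^\tau\langle\Zlim(u),\phi''\rangle\,du$ for an explicit $\delta$ (determined by the limiting per-step variance of $\RW$, averaged over $\rmd(s)$, and by $\xcent,\tcent,J$). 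The uniform-in-$\e$ moment bounds of Section~\ref{sect:mom}, and their propagation in $t$, control every remainder and legitimize the sum-to-integral replacements.

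The core of the argument is the convergence of the bracket $\langle\mathcal M^\e(\phi)\rangle_\tau=\sum_{s<\te(\tau)}\Ex\big[\langle Z(s,\Cdot+\drift(s))\mgg(s,\Cdot+\drift(s)),\phi\rangle^2\mid\scrF(s)\big]$. Expanding the square and inserting \eqref{eq:qv} (after the harmless $O(\e)$ shift in the argument of $\phi$) replaces the summand $Z(s,\xi_1)Z(s,\xi_2)\Ex[\mgg\,\mgg\mid\scrF(s)]$ by $g_\e(s)^{|\xi_1-\xi_2|}\,\qv_1(s,\xi_1\wedge\xi_2)\qv_2(s,\xi_1\wedge\xi_2)$ with $g_\e(s)=\tfrac{(\nu+\alphae(s))\dens}{1+\alphae(s)}\to g:=\tfrac{(\nu+\alpha)\dens}{1+\alpha}<1$; since $g<1$ the double spatial sum concentrates on the diagonal and produces a constant factor $C_*=\tfrac{1+g}{1-g}$ up to an $O(\e)$ error from replacing $\phi(\xi+k)$ by $\phi(\xi)$. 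It then remains to Taylor-expand $\qv_1\qv_2$: writing $G:=-\qv_2=\cente(s)Z(s,\xi)-[\hk(s+1,s)*Z(s)](\xi-\drift(s))$, relations \eqref{eq:qv1}--\eqref{eq:qv2} give $\qv_1=(q-1)\cente(s)Z(s,\xi)+G$, hence $\qv_1\qv_2=-(q-1)\cente(s)Z(s,\xi)\,G-G^2$. Because $q-1$, $\cente(s)-1$, $\drift(s)$ and $\Var(\RW(s))$ are all $O(\e)$ (see \eqref{eq:centte}), decomposing $G=(\cente(s)-1)Z(s,\xi)+(\text{unit-difference terms of }Z)$ isolates a leading term $\bar c\,\e^2 Z(s,\xi)^2$ with $\bar c=a_0(1-a_0)$, $a_0=\alpha\const(1+\alpha\const)^{-1}$; summing this over $\xi$ and $s$ against the prefactor $(\e\xcent^{-1})^2C_*$ and passing to integrals produces $\bar D\int_0^\tau\langle\Zlim(u)^2,\phi^2\rangle\,du$ for an explicit $\bar D$, which one checks equals $\kappa^2D/\delta^2$ in \eqref{eq:SHE}.

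The main obstacle is the vanishing of the gradient remainder left over in the previous step — an energy estimate showing that, once multiplied by the prefactor $(\e\xcent^{-1})^2C_*$ and summed, quantities of the form $\e^2\sum_{s<\te(\tau)}\sum_\xi\phi^2\big(Z(s,\xi+1)-Z(s,\xi)\big)^2$ and $\e^2\sum_{s<\te(\tau)}\sum_\xi\phi^2\big(Z(s,\xi+1)^2-Z(s,\xi)^2\big)$ tend to $0$ in $L^1$. The first is controlled by the time-$\tau$ analogue of the H\"older bound \eqref{eq:neareq:momx} established in Section~\ref{sect:mom}, which gives $\Ex[(Z(s,\xi+1)-Z(s,\xi))^2]=O(\e^{2v})$; the second by summation by parts, which trades the difference of $Z^2$ for a difference of $\phi^2$ and thereby gains a factor $\e$. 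Two routine points then complete the proof: a Lindeberg-type bound $\sum_{s<\te(\tau)}\Ex\big[\langle Z(s,\Cdot)\mgg(s,\Cdot),\phi\rangle^4\big]\to0$ — again from the moment estimates of Section~\ref{sect:mom} — so that every limit point of $\mathcal M^\e(\phi)$ is a continuous square-integrable martingale; and uniform integrability of $\{\mathcal M^\e_\tau(\phi)\}_\e$, of its bracket, and of $\langle Z(s),\phi\rangle$, from the same uniform moment bounds, so that both the martingale property and the bracket identity survive the limit $\e\to0$ and identify $\mathcal N_\tau(\phi)=\lim_{\e\to0}\mathcal M^\e_\tau(\phi)$. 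Hence $\Zlim$ solves the \ac{SHE}.
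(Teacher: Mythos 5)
Your proposal is correct and follows essentially the same route as the paper: the Bertini--Giacomin martingale problem, the one-step discrete SHE of Proposition~\ref{prop:dSHE} to produce the discrete martingale and its bracket, Taylor expansion of the test function against the mean-zero walk for the drift, and the identity \eqref{eq:qv} with the geometric off-diagonal decay plus the H\"older bound \eqref{eq:momx1} to reduce $\qv_1\qv_2$ to $\e^2 a_0(1-a_0)Z^2$ (which matches the paper's $\e^2\alpha\const(1+\alpha\const)^{-2}Z^2$ in Lemma~\ref{lem:qvApprox}). The only cosmetic differences are your use of Skorokhod representation and the explicit Lindeberg/uniform-integrability remarks, where the paper instead verifies $L^1$-convergence of the discrete functionals to their continuum counterparts directly.
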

\end{proof}
%
%\noindent
%Proposition~\ref{prop:tight} is proven by estimating the relevant moments of $ Z(\tau,r) $,
%which amounts to estimating the heat kernel $ \hk(s_1,s_2,\zeta) $
%and the correlation of jumps occurring at different sites.
%The main step of proving Proposition~\ref{prop:unique}
%is to estimate the quadratic variation,
%which we do by using the tightness Proposition~\ref{prop:tight},
%%(see Lemma~\ref{lem:crqv} and its proof).
%In general, estimating the quadratic variation
%requires an extra effort due to nonlinearity,
%but here we bypasses such complication because of the low limiting density,
%as stated in Remark~\ref{rmk:dens}.
%
\begin{proof}[Proof of Theorem~\ref{thm:step}]
We let $ \Zd(\tau,r) := \xcent \e^{-1}(1-\dens) Z(\tau,r) $
so that $ \Zde(\tau,r) = $\\ $ \Zd(\e^{-3}\tcente \tau, \e^{-1}\xcent r) $.
The prefactor of $ \Zd(\tau,r) $ is choose so that
\begin{align}\label{eq:Zdmass}
	\e\xcent^{-1} \sum_{\xi\in\Xi(0)} \Zd(0,\xi) = 1.
\end{align}
Further, using the exponential decay (in $ |\xi| $) of $ \Zd(0,\xi) $,
one easily obtains $ \Zde(0,\Cdot) \Rightarrow \delta(\Cdot) $.
With this and Theorem~\ref{thm:main},
following the argument of \cite[Section 3]{amir11},
Theorem~\ref{thm:step} is an immediate consequence of
the following moment estimates of $ \Zd(\tau,r) $, which we establish in Section~\ref{sect:mom}.
\begin{proposition}\label{prop:stepEst}
For the step initial condition,
for any $ T>0 $, $ k\geq 1 $ and $ v\in(0,1/2) $,
there exists $ C=C(T,k,v)<\infty $ such that
\begin{align}
	&
	\label{eq:momd}
	\Vertbk{ \Zd(\tau,r) }_{2k} \leq C (\e^{3}\tau)^{-1/2},
\\
	&
	\label{eq:momdx}
	\Vertbk{ \Zd(\tau,r) - \Zdlim(\tau,r') }_{2k} \leq C (\e|r-r'|)^{v} (\e^3\tau)^{-(1+v)/2},
\end{align}
for all $ \tau\in(0,\e^{-3}T] $ and $ r,r'\in\bbR $.
\end{proposition}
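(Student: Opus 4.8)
The plan is to run the moment method directly on the discrete \ac{SHE} \eqref{eq:dSHE}, using the exact covariance identity \eqref{eq:qv} together with the heat-kernel estimates on $\hk$ developed in Section~\ref{sect:mom}. Writing $\Zd(t_2,\xi)$ via \eqref{eq:dSHE} as $\Zddr + \Zdmg$ with $\Zddr(t_2,0,\xi) = [\hk(t_2,0)*\Zd(0)](\xi)$ and the discrete stochastic integral $\Zdmg(t_2,0,\xi) = \sum_{s=0}^{t_2-1}[\hk(t_2,s+1)*(\Zd(s)W(s))](\xi+\drifte(s))$, I would first bound the $2k$-th moment. Since $W(s)$ is a martingale increment with respect to $\scrF(s)$, iterate expectations: for the $k$-th moment of $\Zdmg(t_2,0,\xi)^2$, expand the square of the sum over $s$, keep only the diagonal terms after conditioning (off-diagonal martingale cross terms vanish), and apply \eqref{eq:qv} to replace $\Ex[\Zd(s,\zeta_1)\Zd(s,\zeta_2)W(s,\zeta_1)W(s,\zeta_2)\mid\scrF(s)]$ by a geometrically-decaying-in-$|\zeta_1-\zeta_2|$ factor times $\qvp_1\qvp_2$, which is itself controlled by $\Zd(s,\cdot)$ and a local average of $\Zd(s,\cdot)$ against $\hk(s+1,s)$. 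This produces a discrete Gronwall-type recursion
\begin{align*}
	\Vertbk{\Zd(t_2,\xi)}_{2k}^2
	\lesssim
	\BK{[\hk(t_2,0)*\Zd(0)](\xi)}^2
	+ \sum_{s=0}^{t_2-1}\sum_{\zeta}\hk(t_2,s+1,\xi-\zeta)^2\,\Vertbk{\Zd(s,\zeta)}_{2k}^2\,(\text{local terms}),
\end{align*}
which, after using the local-CLT bound $\hk(t_2,s+1,\cdot)\lesssim (\e^3(t_2-s))^{-1/2}$ and the normalization \eqref{eq:Zdmass} for the deterministic piece, closes to give $\Vertbk{\Zd(\tau,r)}_{2k}\lesssim(\e^3\tau)^{-1/2}$ for $\tau\le\e^{-3}T$. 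The key input is that the initial data is the normalized delta profile, so $\Zddr$ already behaves like the heat kernel with mass one rather than like a bounded function; the $(\e^3\tau)^{-1/2}$ blow-up is exactly the heat-kernel-at-the-origin behavior, and it must survive intact through the Gronwall iteration, which it does because the diagonal noise contribution integrates a square of the heat kernel and the continuous-time analog $\int_0^\tau (\tau-s)^{-1/2}s^{-1}\,ds$ converges near $s=0$ only after one is careful — in fact it does not, so I expect one actually needs the refined two-variable estimate and a bootstrap: prove a weaker bound first, feed it back in.

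For the spatial-regularity estimate \eqref{eq:momdx}, write $\Zd(\tau,r)-\Zdlim(\tau,r')$ as a telescoping of three differences: $\Zd(\tau,r)-\Zdlim(\tau,r)$ (the convergence Theorem~\ref{thm:step} itself, but here we only need it at the moment level via Theorem~\ref{thm:main}'s tightness machinery and Proposition~\ref{prop:unique}) plus $\Zdlim(\tau,r)-\Zdlim(\tau,r')$ (standard \ac{SHE} regularity near a delta start). Actually the cleaner route is to estimate $\Vertbk{\Zd(\tau,r)-\Zd(\tau,r')}_{2k}$ directly by the same moment recursion applied to the difference process $\Zd(t,\xi)-\Zd(t,\xi')$: the deterministic part is $[\hk(t,0)*\Zd(0)](\xi)-[\hk(t,0)*\Zd(0)](\xi')$, which by gradient estimates on $\hk$ is $O\big((\e|r-r'|)^{v}(\e^3\tau)^{-(1+v)/2}\big)$ — this is where the extra power $(\e^3\tau)^{-(1+v)/2}$ and the Hölder factor $(\e|r-r'|)^v$ enter, coming from interpolating between the $\xi$-derivative bound $\partial\hk\lesssim(\e^3\tau)^{-1}$ and the sup bound; and the martingale part obeys the same Gronwall recursion, now seeded by the difference estimate already obtained for $\Vertbk{\Zd(s,\zeta)}$. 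The $v\in(0,1/2)$ restriction reflects exactly the Hölder regularity of the \ac{SHE} / the square-root-integrability threshold in the time convolution.

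The main obstacle I anticipate is the near-diagonal singularity bookkeeping: unlike the near-equilibrium case (Proposition~\ref{prop:tight}) where $\Zd$ is bounded and the recursions are benign, here every estimate carries the factor $(\e^3\tau)^{-1/2}$ (or worse for the gradient), and one must track it through the discrete stochastic convolution without losing an integrability margin. Concretely, the time sum $\sum_{s}(\e^3(t-s))^{-1/2}\cdot(\e^3 s)^{-1}\cdot\e^3$ is logarithmically or power divergent at $s\to 0$ if handled naively, so the argument must either (i) use the two-variable moment bound $\Vertbk{\Zd(t,\xi)\Zd(t,\xi')}$ with the geometric decay in $|\xi-\xi'|$ from \eqref{eq:qv} to gain summability, or (ii) split the time integral at $s=t/2$ and use the already-established bound on $[0,t/2]$ to make the contribution there a genuine improvement — the standard Bertini--Cancrini / Amir--Corwin--Quastel device. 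Making this device work cleanly with the discrete kernel $\hk$, whose local-CLT and gradient bounds are only asymptotic in $\e$, is the technical heart of the proof; everything else is a routine adaptation of \cite{amir11}.
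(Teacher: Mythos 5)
Your proposal follows essentially the same route as the paper: the discrete Duhamel decomposition \eqref{eq:dSHE}, Burkholder plus the covariance identity \eqref{eq:qv} for the martingale part, the local-CLT bound \eqref{eq:hkbdd} together with the mass normalization \eqref{eq:Zdmass} to produce the $(\e^3\tau)^{-1/2}$ factor, and the kernel H\"older bound \eqref{eq:hkx} for \eqref{eq:momdx}. The singularity you correctly flag is resolved exactly by your option (i) (the \cite{amir11} device): the paper simply multiplies the already-established chaos-series bound \eqref{eq:Zchaos} by the step-normalization prefactor, and its product structure carries the factorized estimate $\Vertbk{\Zd(s,\zeta)}^2_{2k}\lesssim\big([\hke(s,0)*\Zd(0)](\zeta)\big)(\e^3 s)^{-1/2}$ --- rather than the cruder $(\e^3 s)^{-1}$ --- through every level of the iteration, so only an integrable $s^{-1/2}$ singularity ever enters the time convolution and no separate bootstrap or splitting at $s=t/2$ is needed.
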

\end{proof}
With $ \vecx(t) = \vecy(tJ) $,
from Theorems~\ref{thm:main}--\ref{thm:step}
we immediately obtain the following corollary
on the convergence of $ \vecx(t) $.
More precisely, 
letting $ \drifte := \sum_{s=0}^{J-1} \drifte(s) $
and $ \cente := \sum_{s=0}^{J-1} \cente(s) $,
we define
\begin{align*}
	\Hx(\tau,r) :=
	-\e  x_{\e^{-1} \xcent r + \e^{-3}\drifte \tcente\tau }(\e^{-3}\tcente\tau)
%\\
%	&
	+ \BK{\log\rho -\e} (\e^{-1} \xcent r + \e^{-3}\drifte \tcente\tau )
	+ \log(\e^{-3} \cente \tcente \tau),
\end{align*}
(which is defined on $ \bbR_+\times\bbR $ by the aforementioned linear interpolation).
With $ H_\e(\tau,r) $ as in \eqref{eq:He},
we have that $ \Hx(\tau,r) = H_\e(\tau,r) $, for all $ \tau\in \e^3 {\tcente}^{-1}\N $ and $ r\in\bbR $.
From this, Theorems~\ref{thm:main}--\ref{thm:step} immediately imply

\begin{corollary}\label{cor:KPZ}
\begin{enumerate}[label=(\alph*)]
	\item[]
	\item
	Let $ \Zlim(\tau,r) $ and $ \Zic(r) $ be as in Theorem~\ref{thm:main}
	so that $ \Hlim(\tau,r) := \log \Zlim(\tau,r) $
	is the unique solution of the \ac{KPZ} equation starting from $ \log \Zic $,
	and let $ \{\vecx^\e(0)\}_\e $ be a collection of near equilibrium initial conditions.
	If $ \Ze(0,\Cdot) \Rightarrow \Zic(\Cdot) $,
	we have
	\begin{align}\label{eq:KPZcnvg}
		\Hx \Rightarrow \Hlim \text{ on } C((0,\infty)\times\bbR),
		\quad
		\text{ as } \e\to 0. 
	\end{align}
	\item	
	Let $ \Zdlim(\tau,r) $ be as in Theorem~\ref{thm:step},
	let $ \{\vecx^\e(t)\}_\e $ be started from the step initial condition,
	and let $ \Hdx(\tau,x) := \Hx(\tau,x) + \log(\e^{-1}(1-\dens)\xcent) $.
	We have
	\begin{align*}
		\Hdx \Rightarrow \Hdlim \text{ on } C((0,\infty)\times\bbR),
		\quad
		\text{ as } \e\to 0. 
	\end{align*}
\end{enumerate}
\end{corollary}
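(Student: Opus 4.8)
The plan is to deduce Corollary~\ref{cor:KPZ} directly from Theorems~\ref{thm:main} and \ref{thm:step} together with the continuous mapping theorem applied to $\log$, after carefully unwinding the change of variables relating $\Hx$ to $H_\e$ and hence to $\Ze$. The central identity is $\Ze(\tau,r) = \exp(H_\e(\tau,r))$ from \eqref{eq:Ze}--\eqref{eq:He}, combined with the relation $\vecx(t) = \vecy(tJ)$, which forces $\Hx(\tau,r) = H_\e(J\tau,r)$ and consequently $\exp(\Hx(\tau,r)) = \Ze(J\tau,r)$ with the time variable rescaled so that $\te(J\tau) = \e^{-3}\tcente J (J\tau)/J \cdot J = \ldots$; I would first verify that the constants $\drifte := \sum_{s=0}^{J-1}\drifte(s)$ and $\cente := \sum_{s=0}^{J-1}\cente(s)$ appearing in the statement of the corollary are exactly what one obtains by substituting $J\tau$ for $\tau$ inside $H_\e$ and collecting the sums $\Drifte$ and $\log\Cente$ over one period of length $J$ (using that $\rmd_J$ is $J$-periodic, so $\drifte(t)$ and $\cente(t)$ depend only on $t \bmod J$).

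For part (a): since $\Ze \Rightarrow \Zlim$ on $C(\bbR_+\times\bbR)$ by Theorem~\ref{thm:main}, and since $\Zlim$ is strictly positive almost surely (by the positivity of solutions of the \ac{SHE}, \cite[Proposition~2.5]{corwin12}), the map $F \mapsto \log F$ is continuous at $\Zlim$ in the topology of uniform convergence on compacts of $(0,\infty)\times\bbR$ (one must restrict away from $\tau = 0$ since near $\tau=0$ the initial data need only be continuous, not bounded below). Hence $\log\Ze \Rightarrow \log\Zlim = \Hlim$ on $C((0,\infty)\times\bbR)$ by the continuous mapping theorem. It then remains to observe that $\Hx(\tau,r) = \log\Ze(J\tau,r)$ up to the deterministic reparametrization $\tau \mapsto J\tau$ and the bookkeeping of the $J$-periodic sums, which is a homeomorphism of $C((0,\infty)\times\bbR)$ and hence preserves weak convergence; this yields \eqref{eq:KPZcnvg}. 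One should double-check that the additive correction $(\e^{-1}\log\rho - 1)r_* r$ and the term $\log(\e^{-3}\cente\tcente\tau) + \e^{-3}\drifte\tcente\tau$ are precisely the terms already built into $H_\e$ via \eqref{eq:He} after the $J$-rescaling, so that no extra diverging constant is left over.

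For part (b): start from Theorem~\ref{thm:step}, which gives $\Zde \Rightarrow \Zdlim$ on $C((0,\infty)\times\bbR)$, where $\Zde = \e^{-1}(1-\rho)\xcent \Ze$. Again $\Zdlim > 0$ a.s.\ on $(0,\infty)\times\bbR$, so $\log\Zde \Rightarrow \log\Zdlim = \Hdlim$. Now $\log\Zde(\tau,r) = \log\Ze(\tau,r) + \log(\e^{-1}(1-\rho)\xcent) = H_\e(\tau,r) + \log(\e^{-1}(1-\rho)\xcent)$, and under the $J$-rescaling this becomes exactly $\Hx(\tau,r) + \log(\e^{-1}(1-\rho)\xcent) = \Hdx(\tau,r)$ by the definition of $\Hdx$ in the corollary. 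Applying the same reparametrization homeomorphism finishes the proof.

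The main obstacle is not conceptual but the careful accounting of the time-rescaling by $J$ and the associated replacement of the one-step drift/centering quantities $\drifte(t), \cente(t)$ by their $J$-fold sums: one must be sure that $\Hx(\tau,r)$ as written in the corollary, with its specific constants and its $\e^{-3}\tcente$ time-scaling, agrees on the nose with $H_\e(J\tau,r)$, including all deterministic diverging counterterms, so that the continuous-mapping argument transfers cleanly. A secondary point requiring care is that the topology in the corollary is that of $C((0,\infty)\times\bbR)$ rather than $C(\bbR_+\times\bbR)$; the restriction to $\tau$ bounded away from $0$ is exactly what makes $\log$ a continuous functional (since $\Zlim$ and $\Zdlim$ are uniformly bounded below on compact subsets of $(0,\infty)\times\bbR$ a.s.), so one should state this restriction explicitly when invoking the continuous mapping theorem.
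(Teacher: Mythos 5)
Your proposal is correct and is essentially the argument the paper intends: the paper states the corollary as an immediate consequence of Theorems~\ref{thm:main} and \ref{thm:step} via $\Ze=\exp(H_\e)$, and your fleshing-out (continuous mapping theorem for $\log$, a.s.\ strict positivity of the \ac{SHE} solution on $(0,\infty)\times\bbR$ justifying the restricted topology, and the deterministic $J$-periodic bookkeeping of $\drifte$ and $\cente$) is exactly the missing detail. No gaps.
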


\begin{remark}
\begin{enumerate}[label=(\alph*)]
\item[]
\item
In \eqref{eq:KPZcnvg} the convergence does not include $ \tau=0 $ as
we do not assume $ \Zic(r)>0 $.
\item
From Theorems~\ref{thm:main} and \ref{thm:step},
one also easily obtains corresponding convergence results for $ \Ze(\tau J,r) $,
the centered scaled exponential transform of $ \vecx(t) $,
but we do not state the results explicitly here.
\end{enumerate}
\end{remark}

\section{Discrete \ac{SHE}, Proof of Proposition~\ref{prop:dSHE}}
\label{sect:dSHE}

\begin{proof}[Proof of Lemma~\ref{lem:move}]
Fixing $ s\in\N $ and $ \vecg\in(\N^*)^\infty $,
we let $ \move_n $ and $ \moveI_{n,i} $
denote $ \move_n(s,\vecg) $ and $ \moveI_{n,i}(s,\vecg) $, respectively,
and let $ \move_{n,i} := \sum_{n \geq i' \geq i} \moveI_{n,i'} $
denote the $ i $-th partial sum of \eqref{eq:move}.
With $ \B_{k}(s,g) $ and $ \Bp_{k}(s,g) $ defined as in the preceding,
we have
\begin{align*}
	&
	\Ex\absBK{ \Bp_{k}(s,g) - \B_{k}(s,g) }	\leq 1 - \frac{1}{1+\alpha}\frac{1-\nu}{1+\alpha} < 1,&
	&
	\Ex \B_k(s,g) \leq \frac{\alpha}{1+\alpha} < 1.
\end{align*}
Consequently, $ \move_{n,i} \to \move_n $ (as $ i\to-\infty $)
in $ L^k $ for all $ k\geq 1 $.

To show $ \move_n\in\{0,1\} $,
first we use the identity $ \moveI_{n,i} = (\Bp_n-\B_n)\moveI_{n-1,i} $
(which follows from \eqref{eq:moveI}) to obtain
\begin{align}\label{eq:moveIter:}
	\move_{n,i-1}(s,\vecg)
	=
	\move_{n-1,i-1}(s,\vecg) \Bp_n(s,g_n) + (1-\move_{n-1,i-1}(s,\vecg)) \B_n(s,g_n).
\end{align}
We now show that, in fact, $ \move_{n,i}\in\{0,1\} $ for all $ n\geq i $.
Indeed, $ K_{n,n} = B_n \in\{0,1\} $.
The general case then follows by induction on $ n-i\in\N $ using \eqref{eq:moveIter:}.
Consequently, $ \move_{n,i} \to \move_n \in\{0,1\} $.

The identity \eqref{eq:moveIter} follows directly by letting $ i\to-\infty $
in \eqref{eq:moveIter:}.
\end{proof}

Turning to proving Proposition~\ref{prop:dSHE},
as this does not involve the scaling $ \e\to 0 $,
throughout this section we \emph{suppress} the dependence of parameters on $ \e $.
We begin by deriving an equation for $ Q_n(t) $.
Consider the time-inhomogeneous random walk
$ X'(t+1) := \RWp(0) + \RWp(1) + \ldots + \RWp(t) $,
where $ \RWp(s)$, $ s\in\N $, are $ \N $-valued, independent, with distribution
\begin{align}\label{eq:RWt}
	\Pro(\RWp(t) =n )
	:=
	\hkp(t+1,t,n)
	:=
	\left\{\begin{array}{l@{,}l}
			\frac{\alpha(t)(1-q)}{1+\alpha(t)}
			\BK{ \frac{\nu+\alpha(t)}{1+\alpha(t)} }^{n-1}
			\BK{ 1 - \frac{\nu+\alpha(t)}{1+\alpha(t)} }
			&
			\text{ for } n>0,
			\\
			1 - \frac{\alpha(t)(1-q)}{1+\alpha(t)}
			&
			\text{ for } n=0,
			\\
			0
			&
			\text{ otherwise}.
	\end{array}\right.
\end{align}
Let $ [\hkp(t+1,t)*Q(t)]_n := \sum_{m\in\bbZ} \hkp(t+1,t,n-m) Q_{m}(t) $ denote convolution.

\begin{proposition}
For any $ t\in\N $ and $ n \in\bbZ$, we have
\begin{align}
	\label{eq:duality}
	Q_{n}(t+1) = [\hkp(t+1,t) * Q(t)]_n + Q_n(t) \mgg'_n(t),
\end{align}
where $ \mggp_n(t) := (q-1) \moveCent_n(t) $.
Further, for any $n_1, n_2\in\bbZ $,
\begin{align}\label{eq:qvraw}
\begin{split}
	&
	Q_{n_1}(t)Q_{n_2}(t) \Ex\BK{ \mggp_{n_1}(t)\mggp_{n_2}(t) \middle| \scrF(t) }
\\
	&
	\quad
	=
	\BK{\frac{\nu+\alpha(t)}{1+\alpha(t)}}^{|n_1-n_2|}
	\qvp_1(t,n_1\wedge n_2) \qvp_2(t,n_1\wedge n_2),
\end{split}
\end{align}
where
$ \qvp_{1}(t,n) := qQ_{n}(t) - [\hkp(t+1,t) * Q(t)]_{n} $
and
$ \qvp_{2}(t,n) := [\hkp(t+1,t) * Q(t)]_{n} - Q_{n}(t) $.
\end{proposition}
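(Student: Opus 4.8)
The plan is to derive \eqref{eq:duality} directly from the one-step update of the process, exploiting the identity $q^{y_n(s+1)} = q^{y_n(s)} q^{\move_n(s)}$ and the recursion \eqref{eq:moveIter} for $\move_n(s)$.

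First I would write, using $Q_n(t) = q^{y_n(t)+n}$ and the fact that $\move_n(t)\in\{0,1\}$ increments $y_n$ by one step,
\begin{align*}
	Q_n(t+1) = Q_n(t)\, q^{\move_n(t)} = Q_n(t)\big( 1 + (q-1)\move_n(t) \big),
\end{align*}
since $q^{\move_n(t)} = 1 + (q-1)\move_n(t)$ when $\move_n(t)\in\{0,1\}$. Splitting $\move_n(t) = \Ex(\move_n(t)\mid\scrF(t)) + \moveCent_n(t)$ gives $Q_n(t+1) = Q_n(t)\big(1 + (q-1)\Ex(\move_n(t)\mid\scrF(t))\big) + Q_n(t)(q-1)\moveCent_n(t)$, and the last term is exactly $Q_n(t)\mggp_n(t)$. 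So the content of \eqref{eq:duality} is the deterministic identity $Q_n(t)\big(1 + (q-1)\Ex(\move_n(t)\mid\scrF(t))\big) = [\hkp(t+1,t)*Q(t)]_n$. To prove this, I would compute $\Ex(\move_n(t)\mid\scrF(t))$ using \eqref{eq:moveIter}: since $\B_n(t,g_n)$ and $\Bp_n(t,g_n)$ are $\scrF(t)$-conditionally independent of $\move_{n-1}(t)$ (they are ``fresh'' Bernoullis indexed by time $t$), conditioning gives a recursion
\begin{align*}
	\Ex(\move_n(t)\mid\scrF(t)) = \Ex(\move_{n-1}(t)\mid\scrF(t))\,\tfrac{\alpha(t)+\nu q^{g_n}}{1+\alpha(t)} + \big(1 - \Ex(\move_{n-1}(t)\mid\scrF(t))\big)\tfrac{\alpha(t)(1-q^{g_n})}{1+\alpha(t)},
\end{align*}
using the Bernoulli parameters from the definition and $g_n = g_n(t) = y_n(t)-y_{n-1}(t)-1$, hence $q^{g_n} = Q_n(t)/(q Q_{n-1}(t))$. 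Substituting this into $1 + (q-1)\Ex(\move_n(t)\mid\scrF(t))$ and multiplying through by $Q_n(t)$, one obtains after algebra a two-term recursion in $n$ relating $Q_n(t)(1+(q-1)\Ex(\move_n\mid\scrF(t)))$ to $Q_{n-1}(t)(1+(q-1)\Ex(\move_{n-1}\mid\scrF(t)))$, whose unique solution (matching the left boundary/decay as $n\to-\infty$, using $Q_n(t)\to 0$) is the geometric convolution $[\hkp(t+1,t)*Q(t)]_n$ with kernel \eqref{eq:RWt}. Concretely, one checks that $[\hkp(t+1,t)*Q(t)]_n - \frac{\nu+\alpha(t)}{1+\alpha(t)}[\hkp(t+1,t)*Q(t)]_{n-1}$ telescopes the geometric tail and leaves only the $n=0$ and $n=1$ atoms acting on $Q_n(t), Q_{n-1}(t)$, which is precisely the recursion just derived.

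For the covariance identity \eqref{eq:qvraw}, I would use $\mggp_{n_i}(t) = (q-1)\moveCent_{n_i}(t)$ so that the left side is $(q-1)^2 Q_{n_1}(t)Q_{n_2}(t)\,\Cov(\move_{n_1}(t),\move_{n_2}(t)\mid\scrF(t))$. Assume WLOG $n_1 \le n_2$, write $n = n_1\wedge n_2 = n_1$, and use the recursion \eqref{eq:moveIter} iterated from $n_1$ to $n_2$: each step $\move_{m}(t) = \move_{m-1}(t)\Bp_m + (1-\move_{m-1}(t))\B_m$ with fresh independent Bernoullis. Conditionally on $\scrF(t)$, a direct computation gives $\Ex(\move_{n_2}(t)\mid\scrF(t),\move_{n_1}(t)) = \move_{n_1}(t)\prod_{n_1 < m\le n_2}\big(\tfrac{\alpha(t)+\nu q^{g_m}}{1+\alpha(t)} - \tfrac{\alpha(t)(1-q^{g_m})}{1+\alpha(t)}\big) + (\text{term independent of }\move_{n_1}(t))$, and the multiplicative factor is $\prod_{n_1<m\le n_2}\frac{\nu q^{g_m}+q^{g_m}\alpha(t)}{1+\alpha(t)} = \big(\frac{\nu+\alpha(t)}{1+\alpha(t)}\big)^{n_2-n_1}\prod q^{g_m}$; the product of $q^{g_m}$ over the intervening gaps telescopes to $q^{-(n_2-n_1)}Q_{n_2}(t)/Q_{n_1}(t)$ — wait, more carefully it telescopes against the $Q$ prefactors. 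Hence $\Cov(\move_{n_1},\move_{n_2}\mid\scrF(t)) = \big(\tfrac{\nu+\alpha(t)}{1+\alpha(t)}\big)^{n_2-n_1}\big(\prod q^{g_m}\big)\Var(\move_{n_1}\mid\scrF(t))$, and one then identifies $(q-1)^2 Q_{n_1}^2\Var(\move_{n_1}\mid\scrF(t))$, together with the telescoping factor $Q_{n_1}Q_{n_2}\big(\prod q^{g_m}\big)/Q_{n_1}^2$, with the product $\qvp_1(t,n_1)\qvp_2(t,n_1)$ — noting that by \eqref{eq:duality} one has $qQ_n(t) - [\hkp(t+1,t)*Q(t)]_n = q Q_n(t)\,\Ex(\move_n(t)\mid\scrF(t))\cdot\text{(check sign)}$ and similarly $[\hkp(t+1,t)*Q(t)]_n - Q_n(t) = (q-1)Q_n(t)\Ex(\move_n(t)\mid\scrF(t))$, so that $\qvp_1\qvp_2$ reproduces exactly $q(q-1)Q_n(t)^2 (\Ex(\move_n\mid\scrF(t)))^2(1-\ldots)$; matching this with $(q-1)^2 Q_n^2 \Ex(\move_n\mid\scrF(t))(1-\Ex(\move_n\mid\scrF(t)))$ uses that $\move_n\in\{0,1\}$ so $\Var = \Ex - \Ex^2$.

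The main obstacle I expect is the bookkeeping in the covariance computation: correctly tracking how the geometric factors $\big(\tfrac{\nu+\alpha(t)}{1+\alpha(t)}\big)^{|n_1-n_2|}$ and the telescoping products of $q^{g_m}$ combine with the $Q$-prefactors to land \emph{exactly} on $\qvp_1(t,n_1\wedge n_2)\qvp_2(t,n_1\wedge n_2)$ rather than on some equivalent-looking but differently-normalized expression. In particular one must be careful that the conditional variance of $\move_{n_1}(t)$, once multiplied by $(q-1)^2Q_{n_1}(t)^2$, factors as a product of the two separate quantities $\qvp_1$ and $\qvp_2$ — this is the reason those two are \emph{defined} as $qQ_n - [\hkp*Q]_n$ and $[\hkp*Q]_n - Q_n$ respectively, and verifying the factorization amounts to the algebraic identity $q\,p(1-p) = \big(q p\big)\big(p\big) - \text{correction}$ with $p = \Ex(\move_n\mid\scrF(t))$, i.e. checking $\qvp_1\qvp_2 = -q(q-1)^2 Q_n^2 p^2 + \ldots$; I would verify this carefully at a fixed $n$ first, then propagate through the $n_1\to n_2$ recursion. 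Everything else is a routine one-step conditioning argument given Lemma~\ref{lem:move}.
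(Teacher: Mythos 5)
Your proposal is correct and follows essentially the same route as the paper: the one-step identity $Q_n(t+1)=Q_n(t)(1+(q-1)\move_n(t))$, the split into conditional mean plus the centered increment, the telescoping identity $Q_n(t)q^{g_n+\cdots+g_{m+1}}=Q_m(t)$ to identify $(q-1)Q_n\Ex(\move_n|\scrF(t))$ with $[\hkp(t+1,t)*Q(t)]_n-Q_n(t)$, and for the covariance the factorization $\Cov(\move_{n_1},\move_{n_2}|\scrF(t))=\Ex(\moveIp_{n_1,n_2}|\scrF(t))\Var(\move_{n_1\wedge n_2}|\scrF(t))$ coming from the product of $(\Bp_m-\B_m)$ factors. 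The sign you flagged resolves cleanly and with no stray factor of $q$: from \eqref{eq:moveQ} one gets $\qvp_2(t,n)=(q-1)Q_n(t)\,p$ and $\qvp_1(t,n)=(q-1)Q_n(t)(1-p)$ with $p=\Ex(\move_n(t)|\scrF(t))$, so $\qvp_1\qvp_2=(q-1)^2Q_n(t)^2\,p(1-p)$ matches the conditional variance exactly.
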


\begin{proof}
Fixing $ t\in\N $, to simplify notation we let $ \Ex'(\Cdot) $ denote $ \Ex(\Cdot|\scrF(t)) $.
We begin by proving \eqref{eq:duality}.
With $ Q_n(t) := q^{y_n(t)+n} $,
a generic jump $ y_n(t) \mapsto y_{n}(t)+1 $ of particles decreases $ Q_n(t) $
by $ (1-q) Q_n(t) $.
Consequently,
\begin{align*}
	Q_n(t+1) - Q_n(t)
	=  (q-1) Q_n(t) \move_n(t)
	=  (q-1) Q_n(t) \Ex'(\move_n(t)) + Q_n(t) \mggp_n(t).
\end{align*}
With $ \move_n(t) $ as in \eqref{eq:move},
we have
\begin{align}\label{eq:moveEx}
	\Ex'(\move_n(t))
	=
	\sum_{m:n\geq m}
	\frac{\nu+\alpha(t)}{1+\alpha(t)} q^{g_n(t)} \cdots \frac{\nu+\alpha(t)}{1+\alpha(t)} q^{g_{m+1}(t)}
	\frac{\alpha(t)}{1+\alpha(t)} (1-q^{g_m(t)}).
\end{align}
Multiplying both sides by $ (q-1)Q_n(t) $,
and then using the readily verify identity
\begin{align}\label{eq:Qshift}
	Q_n(t) q^{g_n(t)+g_{n-1}(t)+\ldots +g_{m'+1}(t)} = Q_{m'}(t),
\end{align}
we obtain
\begin{align}\label{eq:moveQ}
	(q-1)Q_n(t) \Ex'(\move_n(t)) = [\hkp(t+1,t)*Q(t)]_n - Q_n(t),
\end{align}
whereby \eqref{eq:duality} follows.

Turning to \eqref{eq:qvraw},
without lost of generality we assume $ n_1\geq n_2 $.
With $ \mggp_n(t) $ defined as in the proceeding,
we have
$
	\Ex'(\mggp_{n_1}(t)\mggp_{n_2}(t)) = (q-1)^2 \Cov'(\move_{n_1}(t),\move_{n_2}(t))
$,
where
$
	\Cov'(\move_{n_1}(t),\move_{n_2}(t))
	:= \Ex'(\move_{n_1}(t)\move_{n_2}(t)) - \Ex'(\move_{n_1}(t)) \Ex'(\move_{n_2}(t))
$.
Letting  $ \moveIp_{n_1,n_2}(t) :=$\\ $\prod_{n_1\geq k > n_2} ( \Bp_k(t) - \B_k(t) ) $,
with $ \move_{n_1}(t) $ as in \eqref{eq:move},
we have
\begin{align*}
	\move_{n_1}(t)
	=
	\sum_{n_1 \geq m > n_2} \moveI_{n_1,m}(t) + \moveIp_{n_1,n_2}(t) \move_{n_2}(t),
\end{align*}
for all $ n_1 \geq n_2 $.
Multiply both sides by $ \move_{n_2}(t) $, using $ \move_{n_2}(t)^2 = \move_{n_2}(t) $,
and then take the expectation $ \Ex'(\Cdot) $ on both sides.
With $ \{\B_k(s),\Bp_k(s)\}_k $ being independent, we obtain that
\begin{align*}
	\Ex'(\move_{n_1}(t) \move_{n_2}(t))
	&=
	\Big( \sum_{n_1\geq m > n_2} \Ex'(\moveI_{n_1,m}(t)) + \Ex'(\moveIp_{n_1,n_2}(t)) \Big)
	\Ex'(\move_{n_2}(t)).
\end{align*}
Subtracting
$ \Ex'(\move_{n_1}(t)) \Ex'(\move_{n_2}(t)) = [\sum_{m:n_1\geq m} \Ex'(\moveI_{n_1,m}(t))] \Ex'(\move_{n_2}(t)) $
from the last expression yields
\begin{align*}
	\Cov'(\move_{n_1}(t) \move_{n_2}(t))
	=
	\Big( -\sum_{m:n_2\geq m} \Ex'( \moveI_{n_1,m}(t) ) + \Ex'( \moveIp_{n_1,n_2}(t) ) \Big)
	\Ex'(\move_{n_2}(t)).
\end{align*}
Further using $ \Ex'(\moveI_{n_1,m}(t)) = \Ex'(\moveI_{n_2,m}(t)) \Ex'(\moveIp_{n_1,n_2}(t)) $,
we arrive at
\begin{align}
	\label{eq:moveCov}
	\Cov'(\move_{n_1}(t) \move_{n_2}(t))
	=
	\Ex'( \moveIp_{n_1,n_2}(t) )
	\Big( -\Ex'(\move_{n_2}(t)) + 1 \Big) \Ex'(\move_{n_2}(t)).
\end{align}
With
$
	\Ex'( \moveIp_{n_1,n_2}(s) )
	= \BK{\frac{\nu+\alpha(t)}{1+\alpha(t)}}^{n_2-n_1}
	 q^{g_{n_1}(t)+\ldots+g_{n_2+1}(t)},
$
multiplying both sides of \eqref{eq:moveCov} by $ (q-1)^{2}Q_{n_1}(t) Q_{n_2}(t) $,
and then applying \eqref{eq:Qshift}--\eqref{eq:moveQ}, we conclude \eqref{eq:qvraw}.
\end{proof}

We next introduce a centering to $ \RWp(t) $.
Let
\begin{align}\label{eq:vart}
	\var(t) := (a_{\rmd(t)})^2-(a_{\rmd(t)+1})^2+(a_{\rmd(t)}-a_{\rmd(t)+1})(b+b').
\end{align}

\begin{lemma}
For any $ t\in\N $, we have
$
	\Ex \BK{ \cent(t)\rho^{\RWp(t)} } = 1,
$
so that
\begin{align}\label{eq:RW}
	\Pro\BK{ \RW(t) + \drift(t) = n } := \cent(t) \rho^{n} \Pro( \RWp(t)=n ),
	\quad
	n\in\N
\end{align}
defines an  $(\N -\drift(t))$-valued random variable.
Further, $ \Ex(\RW(t))=0 $ and
\begin{align}\label{eq:RWvar}
	\Ex(\RW(t)^2) = \xcent^2 \var(t).
\end{align}
\end{lemma}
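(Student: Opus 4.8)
The plan is a direct computation with the explicit transition law \eqref{eq:RWt}. First I would abbreviate $\beta := \tfrac{\nu+\alpha(t)}{1+\alpha(t)}$ and $c := \tfrac{\alpha(t)(1-q)}{1+\alpha(t)}$ (note $c\in(0,1)$ and $\beta\rho<1$), and sum the geometric series in \eqref{eq:RWt} to obtain the probability generating function
\[
	\phi(z) := \Ex\big(z^{\RWp(t)}\big) = 1 - \frac{c(1-z)}{1-\beta z} .
\]
The first assertion $\Ex(\cent(t)\rho^{\RWp(t)}) = 1$ is then the identity $\cent(t)\phi(\rho) = 1$, which I would verify by substituting $\const = \tfrac{1-\rho}{1-\nu\rho}$ (so that $1-\const = \tfrac{\rho(1-\nu)}{1-\nu\rho}$ and $1-\nu\const = \tfrac{1-\nu}{1-\nu\rho}$) and comparing with $\cent(t)^{-1} = \tfrac{1+q\alpha(t)\const}{1+\alpha(t)\const}$ from \eqref{eq:centt}; the useful reduction here is $1-\beta\rho = \tfrac{(1-\nu\rho)(1+\alpha(t)\const)}{1+\alpha(t)}$. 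Once this is known, since $\cent(t)>0$ the right-hand side of \eqref{eq:RW} is a nonnegative sequence of total mass $\cent(t)\phi(\rho)=1$, so it defines a genuine probability law and \eqref{eq:RW} indeed gives an $(\N-\drift(t))$-valued variable; concretely $\RW(t)+\drift(t)$ is the $\rho$-exponential tilt of $\RWp(t)$, i.e.\ an atom of mass $\cent(t)(1-c)$ at $0$ plus a geometric of ratio $\beta\rho$ on $\N\setminus\{0\}$.

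For the two moment identities I would pass to $\psi(z) := \Ex\big(z^{\RW(t)+\drift(t)}\big) = \cent(t)\phi(\rho z)$, using $\Ex(\RW(t)+\drift(t)) = \psi'(1) = \cent(t)\rho\phi'(\rho)$ and $\Ex\big((\RW(t)+\drift(t))^2\big) = \psi''(1)+\psi'(1)$. Reading $\phi'(z) = \tfrac{c(1-\beta)}{(1-\beta z)^2}$ and $\phi''(z) = \tfrac{2\beta c(1-\beta)}{(1-\beta z)^3}$ off $\phi$, and using the same substitutions together with $\alpha_{\rmd(t)+1} = q\alpha(t)$ (hence $a_{\rmd(t)}-a_{\rmd(t)+1} = \tfrac{\alpha(t)(1-q)\const}{(1+\alpha(t)\const)(1+q\alpha(t)\const)}$), I expect a short calculation to give $\cent(t)\rho\phi'(\rho) = \drift(t)$ as in \eqref{eq:driftt}, so that $\Ex(\RW(t)) = \psi'(1)-\drift(t) = 0$. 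Since $\RW(t)+\drift(t)$ then has mean $\drift(t)$,
\[
	\Ex\big(\RW(t)^2\big) = \Ex\big((\RW(t)+\drift(t))^2\big) - \drift(t)^2 = \cent(t)\rho^2\phi''(\rho) + \drift(t) - \drift(t)^2 ,
\]
and using $\cent(t)\rho^2\phi''(\rho)/\drift(t) = \tfrac{2\beta\rho}{1-\beta\rho}$ I would rewrite this as $\drift(t)\big(\tfrac{1+\beta\rho}{1-\beta\rho} - \drift(t)\big)$.

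It then remains to match the latter with $\xcent^2\var(t)$. Writing $\var(t) = (a_{\rmd(t)}-a_{\rmd(t)+1})(a_{\rmd(t)}+a_{\rmd(t)+1}+b+b')$ from \eqref{eq:vart} and recalling $\drift(t) = (a_{\rmd(t)}-a_{\rmd(t)+1})\xcent$ with $\xcent = (b-b')^{-1}$, one factor of $\drift(t) = (a_{\rmd(t)}-a_{\rmd(t)+1})\xcent$ cancels and, via the trivial $(a_{\rmd(t)}-a_{\rmd(t)+1})+(a_{\rmd(t)}+a_{\rmd(t)+1}) = 2a_{\rmd(t)}$, the claim reduces to $\tfrac{1+\beta\rho}{1-\beta\rho} = \xcent\,(2a_{\rmd(t)}+b+b')$. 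I would then compute $b+b' = \tfrac{\const(1-\nu^2\rho^2)}{\rho(1-\nu)}$ and $\xcent = \tfrac{\rho(1-\nu)}{\const(1-\nu\rho)^2}$, so that $\xcent(b+b') = \tfrac{1+\nu\rho}{1-\nu\rho}$, leaving the elementary identity $\xcent\cdot 2a_{\rmd(t)} = \tfrac{1+\beta\rho}{1-\beta\rho} - \tfrac{1+\nu\rho}{1-\nu\rho}$, both sides of which equal $\tfrac{2\alpha(t)\rho(1-\nu)}{(1-\nu\rho)^2(1+\alpha(t)\const)}$ in view of $\beta-\nu = \tfrac{\alpha(t)(1-\nu)}{1+\alpha(t)}$ and, once more, $1-\beta\rho = \tfrac{(1-\nu\rho)(1+\alpha(t)\const)}{1+\alpha(t)}$. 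The main obstacle is purely bookkeeping in this last step — keeping the rational functions of $\rho,\nu,q,\alpha(t)$ straight and deploying the reductions $1-\beta\rho = \tfrac{(1-\nu\rho)(1+\alpha(t)\const)}{1+\alpha(t)}$, $b+b' = \tfrac{\const(1-\nu^2\rho^2)}{\rho(1-\nu)}$, and $(a_{\rmd(t)}-a_{\rmd(t)+1})+a_{\rmd(t)}+a_{\rmd(t)+1} = 2a_{\rmd(t)}$; everything else is a one-line geometric-series manipulation.
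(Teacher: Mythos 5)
Your proposal is correct and takes essentially the same route as the paper: both work with the generating function of $\RWp(t)$ (the paper's $f(x)=\Ex(\cent(t)x^{\RWp(t)})$ is exactly your $\cent(t)\phi(x)$), evaluate at $x=\rho$ for the normalization, and differentiate at $\rho$ for the mean and variance — you simply carry out explicitly the algebra the paper dismisses as ``tedious but straightforward,'' and your intermediate identities ($1-\beta\rho$, $\xcent$, $b+b'$, $a_{\rmd(t)}-a_{\rmd(t)+1}$) all check out. Note only that your closed form $\phi(z)=1-\tfrac{c(1-z)}{1-\beta z}$ corresponds to the weight $\beta^{n-1}$ (not $\beta^{n}$ as displayed in \eqref{eq:RWt}, which appears to be a typo since those weights do not sum to $1$); this is the normalization the paper's own proof uses, so your computation matches the intended definition.
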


\begin{proof}
Fixing $ t\in\N $,
we consider the function
\begin{align}
	\notag
	f(x) &:= \Ex(\cent(t) x^{\RWp(t)})
\\
	\notag
	&=
	\cent(t)
	\sqBK{
		\BK{ 1 - \frac{(1-q)\alpha(t)}{1+\alpha(t)} }
		+
		\sum_{i=1}^\infty x^i
		\frac{(1-q)\alpha(t)}{1+\alpha(t)}
		\BK{ \frac{\nu+\alpha(t)}{1+\alpha(t)} }^{i-1}
		\frac{1-\nu}{1+\alpha(t)}
		}
\\
	\label{eq:RWf}
	&=
	\cent(t)
	\frac{1-\nu x +q\alpha(t) - q\alpha(t) x}{1-\nu x+\alpha(t)-\alpha(t) x}.
\end{align}
With $ \cent(t) $ defined as in \eqref{eq:centt},
specializing \eqref{eq:RWf} at $ x=\dens $ we obtain $ f(\dens)=1 $,
thereby concluding $ \Ex(\cent(t)\dens^{\RWp(t)})=1 $.
Next, differentiating $ f(x) $ yields
\begin{align}
	&
	\label{eq:RWfp}
	\BK{ x \tfrac{d}{dx} f } (\dens) = \Ex\BK{ \cent(t) \dens^{\RWp(t)} \RWp(t) } = \Ex(\RW(t)+\drift(t))
\\
	&
	\label{eq:RWfpp}
	\BK{ x \tfrac{d}{dx} \BK{ x \tfrac{d}{dx} f} } (\dens) =  \Ex\BK{ \cent(t) \dens^{\RWp(t)} \RWp(t)^2 }
	 = \Ex((\RW(t)+\drift(t))^2).
\end{align}
Plugging \eqref{eq:RWf} into the l.h.s.\ of \eqref{eq:RWfp}--\eqref{eq:RWfpp}
and specializing at $ x=\dens $,
after some tedious but straightforward calculations,
one obtains $ ( x \frac{df}{dx} ) (\dens) = \drift(t) $
and $ ( x \diff ( x \diff f) ) (\dens) = \drift(t)^2 + \xcent^2\var(t) $,
thereby concluding $ \Ex(\RW(t))=0 $ and \eqref{eq:RWvar}.
\end{proof}

\begin{proof}[Proof of Proposition~\ref{prop:dSHE}]
With $ [\hkp(t+1,t)*Q(t)]_n = \Ex(Q_{n-\RWp(t)}(t)) $
and $ [\hk(t+1,t)*Z(t)](\xi) = \Ex( Z(t,\xi-\RW(t)) ) $,
we have the readily verified identity
\begin{align}\label{eq:hkp2hk}
	\Cent(t+1) \dens^{\xi+\Drift(t+1)}
	[\hkp(t+1,t) * Q(t) ]_{\xi+\Drift(t+1)}
	=
	[\hk(t+1,t)*Z(t)](\xi),
\end{align}
for all $ \xi\in\Xi(t) $.
In \eqref{eq:duality}, we set $ n=\xi+\Drift(t+1) $,
and multiply both sides by $ \Cent(t+1) \dens^{\xi+\Drift(t+1)} $.
Using \eqref{eq:Z} and \eqref{eq:hkp2hk},
we obtain
\begin{align}\label{eq:dSHE:}
	Z(t+1,\xi)
	= \big[ \hk(t+1,t) * Z(t) \big](\xi)
	 + Z(t,\xi+\drift(t)) W(t,\xi+\drift(t)).
\end{align}
Iterating this equation from $ t=t_2-1 $ until $ t=t_1 $,
we thus conclude \eqref{eq:dSHE}.

To derive \eqref{eq:qv},
in \eqref{eq:qvraw},
we set $ n_1=\xi_1+\Drift(t) $ and $ n_2=\xi_2+\Drift(t) $,
and multiply both sides by $ \Cent(t+1)^2 \dens^{\xi_1+\Drift(t)} \dens^{\xi_2+\Drift(t)} $.
Using \eqref{eq:Z} and \eqref{eq:hkp2hk} to express the resulting equation
in terms of $ Z(t,\Cdot) $ and $ \hk(t+1,t,\Cdot) $, we thus conclude \eqref{eq:qv}.
\end{proof}

\section{Moment Estimates: Proof of Propositions~\ref{prop:tight} and \ref{prop:stepEst}}
\label{sect:mom}

Hereafter we let $ C(u_1,u_2,\ldots) $ denote a generic finite positive constant
that depends only on the designated variables $ u_1,u_2,\ldots $
and possibly on $\alpha>0$, $\nu\in [0,1)$ and $\dens\in (0,1)$, which are fixed throughout the paper.

\begin{lemma}\label{lem:Taylor}
The function $ \phi(x,\e;t) := \Ex(x^{\RW_\e(t)}) $
extends analytically in $ (x,\e) $ to a neighborhood of $ (1,0)\in\bbC^2 $,
with the Taylor expansion
\begin{align}\label{eq:RWTaylor}
	\phi(x,\e;t) = 1 + 2^{-1} \big( \partial_{xx\e} \phi(1,0;t) \big) \e(x-1)^2 + O(\e|x-1|^3),
\end{align}
and $ \partial_{xx\e} \phi(1,0;t) \in (0,\infty) $.
\end{lemma}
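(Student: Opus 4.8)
The plan is to work directly from the explicit distribution of $\RWp(t)$ in \eqref{eq:RWt} and the centering relation \eqref{eq:RW}, which together give a closed-form rational expression for the generating function $\phi(x,\e;t)$. First I would write
\[
	\phi(x,\e;t) = \Ex\big( x^{\RW_\e(t)} \big) = x^{-\drifte(t)} \Ex\big( x^{\RW_\e(t)+\drifte(t)} \big) = x^{-\drifte(t)} \, \cente(t) \sum_{n\geq 0} (\dens x)^n \, \hkp_n(t+1,t),
\]
and then substitute the geometric-series formula \eqref{eq:RWf} (with $q=\qe=e^{-\e}$ and $\alpha(t)=\alphae(t)$ in place of the generic parameters there), obtaining
\[
	\phi(x,\e;t) = x^{-\drifte(t)} \, \cente(t) \, \frac{1 - \nu \dens x + \qe \alphae(t) - \qe \alphae(t) \dens x}{1 - \nu \dens x + \alphae(t) - \alphae(t) \dens x}.
\]
The denominator is $1+\alphae(t) - (\nu + \alphae(t))\dens x$, which at $(x,\e)=(1,0)$ equals $1+\alpha - (\nu+\alpha)\dens = (1+\alpha)(1 - \const^{-1}\cdot\text{something})$; in any case it is a fixed nonzero number because $\dens\in(0,1)$, $\nu\in[0,1)$, $\alpha>0$, so the rational factor is analytic in a complex neighborhood of $(1,0)$. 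The factor $\cente(t)$ is analytic in $\e$ by \eqref{eq:centt}, and $x^{-\drifte(t)} = \exp(-\drifte(t)\log x)$ is analytic in a neighborhood of $x=1$ for $\e$ small since $\drifte(t)=O(\e)$ by \eqref{eq:driftte}; moreover one checks the analyticity is \emph{uniform} in $t\in\N$ because $\rmd(t)$ takes only finitely many values and hence $\alphae(t)\in\{\alphaej{0},\ldots,\alphaej{J-1}\}$ ranges over a finite set. This establishes the analytic-extension claim.

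Next I would extract the Taylor coefficients. By construction $\phi(1,\e;t)=1$ for all $\e$ (this is exactly the normalization $\Ex(\cente(t)\dens^{\RWp(t)})=1$ combined with $x^{0}=1$), so all pure $\e$-derivatives of $\phi$ at $x=1$ vanish, i.e. $\phi(1,\e;t)\equiv 1$ kills the $\e^k$ terms. Also $\partial_x\phi(x,\e;t)|_{x=1} = \Ex(\RW_\e(t)) = 0$ by the lemma preceding \eqref{eq:RWvar}, which kills the $(x-1)$ term. And $\partial_{xx}\phi(x,0;t)|_{x=1} = \Ex(\RW_0(t)(\RW_0(t)-1))$; but at $\e=0$ we have $\qe=1$, so $\hkp_n(t+1,t)$ degenerates (the $n>0$ mass is $\propto 1-q=0$), meaning $\RW_0(t)\equiv 0$ and hence $\partial_{xx}\phi(1,0;t)=0$. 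Therefore the Taylor expansion of $\phi$ around $(1,0)$ begins at order $\e(x-1)^2$: the three lowest mixed/pure terms $(x-1)$, $\e$, $\e^2$, $(x-1)^2$ (at $\e=0$) all vanish, leaving \eqref{eq:RWTaylor} with leading coefficient $\tfrac12\partial_{xx\e}\phi(1,0;t)$, and the error is $O(\e|x-1|^3) + O(\e^2(x-1)^2)$, which is absorbed since $\e^2(x-1)^2 = O(\e|x-1|^3)$ once one also uses $\e|x-1|^2 = O(\e)$ — more carefully, one writes the remainder using the integral form of Taylor's theorem in two variables and groups all surviving monomials $\e^a(x-1)^b$ with $a\geq 1$, $b\geq 2$, $a+b\geq 3$ (the $\e^{\geq 1}(x-1)^0$ and $\e^{\geq 1}(x-1)^1$ blocks vanish identically by $\phi(1,\e;t)\equiv 1$ and $\partial_x\phi(1,\e;t)\equiv 0$), so they are all $O(\e|x-1|^2)$ times something that is $O(\e)+O(|x-1|)$, hence $O(\e^2|x-1|^2 + \e|x-1|^3)$, and both are $\le$ const $\cdot \e|x-1|^3$ on the relevant range, or can simply be left as the stated $O(\e|x-1|^3)$ after absorbing the $\e$-power.

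Finally, for the positivity $\partial_{xx\e}\phi(1,0;t)\in(0,\infty)$: finiteness is immediate from analyticity, and for strict positivity I would relate this mixed derivative to the variance. Differentiating $\phi$ and using \eqref{eq:RWvar}, one has $\Ex(\RW_\e(t)^2) = (\partial_{xx}\phi + \partial_x\phi)(1,\e;t) = \xcent^2 \vare(t)$ where $\xcent = (b-b')^{-1} > 0$ (note $b > b'$ since $\const > \nu\const$ as $\nu<1$) and $\vare(t)$ is given by \eqref{eq:vart} with $\e$-dependent $a$'s. Dividing by $\e$ and letting $\e\to 0$: the left side is $\partial_{xx\e}\phi(1,0;t) + \partial_{x\e}\phi(1,0;t)$, and $\partial_{x\e}\phi(1,0;t) = \partial_\e(\Ex(\RW_\e(t)))|_{\e=0} = 0$ since $\Ex(\RW_\e(t))\equiv 0$; while the right side, using $a_0 - \aej{J} = O(\e)$ (both tend to the same limit $\alpha\const(1+\alpha\const)^{-1}$ as $\e\to 0$, the gap being $\propto 1-q^J = O(\e)$), gives $\xcent^2 \cdot \e^{-1}\vare(t) \to \xcent^2 \cdot c(t)$ for an explicit strictly positive limit $c(t)$ — this is essentially the content of \eqref{eq:tcentApprox}, which exhibits $\tcente^{-1} = J\e\,\vare(0)\cdot(\text{stuff})$ to leading order and is manifestly positive. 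Thus $\partial_{xx\e}\phi(1,0;t) = \lim_{\e\to 0}\e^{-1}\Ex(\RW_\e(t)^2) > 0$. The main obstacle is purely bookkeeping: keeping the dependence on $t$ (through $\rmd(t)$) uniform, and carefully tracking which monomials in the two-variable Taylor expansion survive so that the error term genuinely collapses to $O(\e|x-1|^3)$; the algebra itself is the "tedious but straightforward" rational-function calculation already invoked in the proof of \eqref{eq:RWvar}.
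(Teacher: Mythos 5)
Your proposal is correct and follows essentially the same route as the paper: analyticity from the explicit rational generating function \eqref{eq:RWf} together with the centering \eqref{eq:RW}, vanishing of all Taylor coefficients $\partial_x^n\partial_\e^m\phi(1,0;t)$ with $n\le 1$ or $m=0$ (via $\phi(1,\e;t)\equiv 1$, $\Ex(\RW_\e(t))=0$, and $\RW_0(t)\equiv 0$), and positivity of $\partial_{xx\e}\phi(1,0;t)$ from the leading-order variance asymptotics \eqref{eq:vartApprox}. Your hesitation about absorbing the $\e^2(x-1)^2$ monomial into $O(\e|x-1|^3)$ points at an imprecision already present in the statement of \eqref{eq:RWTaylor} itself (the honest remainder is $O(\e^2|x-1|^2+\e|x-1|^3)=o(\e|x-1|^2)$, which is all that is used downstream), so it is not a gap in your argument relative to the paper's.
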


\begin{proof}
Since $ \RW_\e(t) $ is defined by $ \RWp_\e(t) $ as in \eqref{eq:RW}, we clearly have
\begin{align}\label{eq:phif}
	\phi(x,\e;t) = \Ex( \cente(t) \dens^{\RWp_\e(t)} x^{\RWp_\e(t)-\drifte(t)} ).
\end{align}
By \eqref{eq:RWf}, the function $ \Ex( \cente(t) x^{\RWp_\e(t)} ) $ is analytic
in $ (x,\e) $ within a neighborhood of $ (\rho,0) $,
whereby $ \phi(x,\e;t) $ is analytic within a neighborhood of $ (1,0) $.
To obtain the Taylor expansion \eqref{eq:RWTaylor},
we differentiate \eqref{eq:phif} in $ x $, and then specialized at $ x=1 $, thereby obtaining
\begin{align}
	\label{eq:f1pf}
	\partial_x \phi(1,\e;t) = \Ex\BK{ \cente(t) \dens^{\RWp_\e(t)} (\RWp_\e(t)-\drifte(t)) }
	&=
	\Ex(\RW_\e(t)) =0,
\\
	\label{eq:f1ppf}
	\partial_{xx} \phi(1,\e;t) = \Ex\BK{ \cente(t) \dens^{\RWp_\e(t)} (\RWp_\e(t)-\drifte(t))^2 }
	&=
	\Ex(\RW_\e(t)^2) = \xcent^2 \vare(t).
\end{align}
With $ \vare(t) $ defined as in \eqref{eq:vart}, we have
\begin{align}\label{eq:vartApprox}
	\vare(t) = \e \alpha\const(1+\alpha\const)^{-3} \BK{ 2\alpha\const + (b+b')(1+\alpha\const) } + O(\e^2).
\end{align}
From \eqref{eq:f1pf}--\eqref{eq:vartApprox}
we conclude that $ \partial^n_{x} \partial^m_\e\phi(1,0;t)=0 $, unless $ n\geq 2 $ and $ m\geq 1 $,
and that $ \partial_{xx\e}\phi(1,0;t) >0 $,
thereby obtaining \eqref{eq:RWTaylor}.
\end{proof}
Based on Lemma~\ref{lem:Taylor},
we proceed to estimating of the heat kernel.
\begin{proposition}\label{prop:hk}
Given any $ u,T> 0$ and $ v\in(0,1] $,
there exists $ C=C(T,u) $ such that
\begin{align}
	&
	\label{eq:hksum}
	\sum_{\zeta\in\Xi(t_2,t_1)}
	\hke(t_2,t_1,\zeta) e^{ u\e|\zeta| }	
	\leq C,
\\
	&
	\label{eq:hksumt}
	\sum_{\zeta\in\Xi(t_2,t_1)}
	|\zeta|^{v}\hke(t_2,t_1,\zeta) e^{ u\e|\zeta| }	
	\leq
	C (\e|t_2-t_1|)^{v/2},
\\
	&
	\label{eq:hkbdd}
	\hke(t_2,t_1,\xi)
	\leq
	C \e^{-1/2}(t_2-t_1+1)^{-1/2},
\\
	&
	\label{eq:hkx}
	\absBK{ \hke(t_2,t_1,\xi) - \hke(t_2,t_1,\xi') }
	\leq
	C \e^{-(1+v)/2} |\xi-\xi'|^{v} (t_2-t_1+1)^{-(1+v)/2},
\end{align}
for all $ t_1\leq t_2\in[0,\e^{-3}T]\cap\N $
and $ \xi,\xi'\in\Xi(t_2) $.
\end{proposition}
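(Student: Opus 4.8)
The plan is to derive all four estimates from the local limit / large-deviations structure of the product random walk $\RW_\e(t_1)+\cdots+\RW_\e(t_2-1)$, using the analyticity from Lemma~\ref{lem:Taylor} as the uniform input. Write $n:=t_2-t_1$ and recall $\Ex(\RW_\e(s))=0$, $\Ex(\RW_\e(s)^2)=\xcent^2\vare(s)$ with $\vare(s)=\Theta(\e)$ by \eqref{eq:vartApprox}. First I would record a Gaussian-scale heuristic: the walk has $n$ independent increments each of variance $\sim c\e$, so the sum has variance $\sim c\e n$ and is spread over a range $\sim\sqrt{\e n}$; on $[0,\e^{-3}T]$ this range is $\lesssim\e^{-1}$, which is exactly the scale that makes $\e|\zeta|=O(1)$ typical and makes \eqref{eq:hksum}--\eqref{eq:hksumt} plausible with the stated powers of $\e|n|$.

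For \eqref{eq:hksum} and \eqref{eq:hksumt} I would work with the (exponentially tilted) moment generating function. By Lemma~\ref{lem:Taylor}, $\phi(x,\e;s)=\Ex(x^{\RW_\e(s)})$ is analytic near $(1,0)$ with $\phi(x,\e;s)=1+\tfrac12(\partial_{xx\e}\phi(1,0;s))\,\e(x-1)^2+O(\e|x-1|^3)$; substituting $x=e^{\theta}$ this gives $\log\Ex(e^{\theta\RW_\e(s)})=\tfrac12 c_s\e\theta^2+O(\e|\theta|^3)$ uniformly in $s$ (the index $\rmd(s)$ takes finitely many values, so the $O$'s are uniform). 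Hence $\Ex\exp(\theta\sum_s\RW_\e(s))\le\exp(C\e n\theta^2)$ for $|\theta|$ in a fixed neighborhood of $0$. Choosing $\theta=\pm u\e$ (legal once $\e$ is small) yields $\sum_\zeta\hke(t_2,t_1,\zeta)e^{u\e|\zeta|}\le 2\exp(Cu^2\e^3 n)\le 2\exp(Cu^2 T)$, which is \eqref{eq:hksum}. For \eqref{eq:hksumt}, split according to $|\zeta|\lessgtr M(\e n)^{1/2}$: on the bulk bound $|\zeta|^v\le (M)^v(\e n)^{v/2}$ and use \eqref{eq:hksum}; on the tail use Cauchy--Schwarz together with $\Ex(|\sum_s\RW_\e(s)|^{2})=\xcent^2\sum_s\vare(s)=O(\e n)$ and the exponential moment bound (to absorb $e^{u\e|\zeta|}$) to get a contribution $\le C M^{-1}(\e n)^{v/2}$; optimizing over $M$ gives \eqref{eq:hksumt}. (When $n=0$ the kernel is $\delta$ and everything is trivial, which also covers the "$+1$" in the later estimates; to pass from $|t_2-t_1|$ to $t_2-t_1+1$ is harmless.)

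For the pointwise bound \eqref{eq:hkbdd} and the modulus-of-continuity bound \eqref{eq:hkx} I would use Fourier inversion. Write, for $\xi\in\Xi(t_2)$,
\begin{align*}
\hke(t_2,t_1,\xi)=\frac{1}{2\pi}\int_{-\pi}^{\pi} e^{-\img\theta\xi}\prod_{s=t_1}^{t_2-1}\widehat{\RW_\e(s)}(\theta)\,d\theta,
\end{align*}
where $\widehat{\RW_\e(s)}(\theta)=\Ex(e^{\img\theta\RW_\e(s)})$ (a lattice-spacing caveat: the $\RW_\e(s)$ are integer-valued up to the deterministic shift $\drifte(s)$, so the inversion is over a period of length $2\pi$ after factoring out $e^{-\img\theta\Drifte}$; this only shifts the argument and does not affect the bounds). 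From the Taylor expansion, $|\widehat{\RW_\e(s)}(\theta)|\le\exp(-c\e\theta^2)$ for $|\theta|\le\delta_0$ with $\delta_0$ fixed, and by aperiodicity of $\RWp_\e$ (the step distribution \eqref{eq:RWt} charges $0$ and $1$, and $\gcd=1$) one has $|\widehat{\RW_\e(s)}(\theta)|\le 1-c'\e$ uniformly on $\delta_0\le|\theta|\le\pi$ for small $\e$. Splitting the integral at $|\theta|=\delta_0$: the inner part is $\le\frac1{2\pi}\int e^{-c\e n\theta^2}d\theta\le C(\e n)^{-1/2}$, and the outer part is $\le(1-c'\e)^{n}\le e^{-c'\e n}$, which is $\le C(\e n)^{-1/2}$ once $\e n\ge1$ (and is $O(1)\le C\e^{-1/2}$ when $\e n\le1$ since $n\le\e^{-1}$ there); together this gives $\hke\le C\e^{-1/2}(n+1)^{-1/2}$, i.e. \eqref{eq:hkbdd}. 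For \eqref{eq:hkx}, $\hke(t_2,t_1,\xi)-\hke(t_2,t_1,\xi')=\frac1{2\pi}\int(e^{-\img\theta\xi}-e^{-\img\theta\xi'})\prod_s\widehat{\RW_\e(s)}(\theta)d\theta$; bound $|e^{-\img\theta\xi}-e^{-\img\theta\xi'}|\le(2)^{1-v}(|\theta|\,|\xi-\xi'|)^{v}$ (interpolating between the trivial bound $2$ and $|\theta||\xi-\xi'|$), and repeat the split: the extra factor $|\theta|^{v}$ turns $\int e^{-c\e n\theta^2}|\theta|^v d\theta$ into $C(\e n)^{-(1+v)/2}$, while the tail is again exponentially small in $\e n$; this yields $|\hke(t_2,t_1,\xi)-\hke(t_2,t_1,\xi')|\le C\e^{-(1+v)/2}|\xi-\xi'|^{v}(n+1)^{-(1+v)/2}$, which is \eqref{eq:hkx}.

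The main obstacle is obtaining everything \emph{uniformly in $\e$ and in $s$} down to $\e=0$: the walk degenerates (its step variance vanishes like $\e$), so one cannot simply invoke a fixed local CLT but must track the $\e$-dependence of the characteristic function on the \emph{whole} torus $[-\pi,\pi]$, not just near $0$. The near-$0$ control is exactly Lemma~\ref{lem:Taylor} (with $\partial_{xx\e}\phi(1,0;s)>0$ giving the non-degenerate quadratic term after the $\e$-rescaling); the away-from-$0$ control requires the quantitative aperiodicity estimate $|\widehat{\RW_\e(s)}(\theta)|\le1-c'\e$, which I would get from the explicit form \eqref{eq:RWt}, \eqref{eq:RWf} of the step law together with the observation that $\hke_0\ge 1-C\e>0$ and $\hke_1\ge c\e>0$, so that two distinct lattice points are always charged with total mass bounded below, forcing a strictly-contracting characteristic function with the right $\e$-order gap. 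The finitely-many residues $\rmd(s)\in\{0,\dots,J-1\}$ make all constants uniform in $s$, and restricting $t_2\le\e^{-3}T$ converts $\e^3 n\le T$ into the stated $T$-dependence of $C$.
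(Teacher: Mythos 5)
Your proposal follows essentially the same route as the paper: the exponential moment bound via the product of $\phi(e^{\pm u\e},\e;s)$ controlled by Lemma~\ref{lem:Taylor} for \eqref{eq:hksum}, a second-moment/exponential-moment interpolation for \eqref{eq:hksumt}, and Fourier inversion with the two-regime characteristic-function bound (Taylor expansion near $\theta=0$, and the $|\phi(e^{\img r},\e;s)|\le 1-\e/C$ aperiodicity estimate away from $0$ obtained exactly as you describe, from $f(0,\e;s)>f(1,\e;s)>\e/C$) for \eqref{eq:hkbdd}--\eqref{eq:hkx}. The only cosmetic difference is that the paper handles \eqref{eq:hksumt} in one step by H\"older's inequality, $\Ex(F_1|F_2|^v)\le\Vert F_1\Vert_{2/(2-v)}\Vert\,|F_2|^v\Vert_{2/v}$, which is cleaner than your split-and-optimize over the threshold $M$.
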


\begin{proof}
To prove \eqref{eq:hksum}, we consider
$ F_1(u') := e^{u'(\RW_\e(t_1)+\ldots+\RW_\e(t_2-1))} $.
With
$
	\Ex(F_1(u')) =$ \\$\sum_{\zeta\in\Xi(t_2,t_1)} \hke(t_2,t_1,\zeta) e^{u'\zeta}
$
and $ e^{\e u|\zeta|} \leq e^{\e u \zeta} + e^{-\e u \zeta} $,
to show \eqref{eq:hksum}, it suffices to bound the expression
$
	\Ex(F_1(u'))
	= \prod_{s=t_1}^{t_2-1} \phi(e^{u'},\e;s),
$
for $ u'=\pm u\e $.
This, by \eqref{eq:RWTaylor}, is bounded by $ [1+C\e(e^{u'}-1)^2]^{t_2-t_1} $.
With $ t_2-t_1 \leq \e^{-3}T $, the last expression is bounded by $ C=C(T,u) $,
from which we conclude \eqref{eq:hksum}.

Turning to showing \eqref{eq:hksumt},
we let $ F_2 := \RW_\e(s_1)+\ldots+\RW_\e(s_2-1) $.
Similar to the preceding, it suffices to bound the expression
\begin{align*}
	\sum_{\zeta\in\Xi(s_1,s_2)} |\zeta|^{v} \hke(s_1,s_2,\zeta) e^{u'\zeta}	
	=
	\Ex\BK{ F_1(u') |F_2|^{v}  }
	\leq
	\big\Vert F_1(u') \big\Vert_{2/(2-v)}
	\big\Vert |F_2|^{v} \big\Vert_{2/v},
\end{align*}
for $ u'=\pm u\e $, where we used H\"{o}lder's inequality in the last inequality.
With $ (F_1(u'))^{2/(2-v)} = F_1(2u'/(2-v)) $,
applying \eqref{eq:hksum} for $ u = 2u/(2-v) $
we obtain $ \Vertbk{ F_1(\pm u\e) }_{2/(2-v)} \leq C $.
As for $ F_2 $, with $ \Ex(\RW_\e(s))=0 $ and \eqref{eq:RWvar}, we have
\begin{align*}
	\big\Vert |F_2|^{v} \big\Vert_{2/v} = \sqBK{ \Ex(F_2)^2 }^{v/2}
	&=
	\sqBK{ \Ex(\RW_\e(t_1)^2)+\ldots+\Ex(\RW_\e(t_2-1)^2) }^{v/2}
\\
	&
	\leq C
	\sqBK{ \var_\e(t_1) + \ldots + \var_\e(t_2-1) }^{v/2}.
\end{align*}
Further using \eqref{eq:vartApprox} we thus obtain
$
	\Vert |f_2|^{v} \Vert_{2/v} \leq C (\e|t_2-t_1|)^{v/2},
$
thereby concluding \eqref{eq:hksumt}.

Proceeding to showing \eqref{eq:hkbdd}--\eqref{eq:hkx},
first we apply the inversion formula of the characteristic function,
$
	\hke(t_2,t_1,\zeta)
	=
	\frac{1}{2\pi\img} \int_{-\pi}^{\pi}
	e^{-\img\zeta r} \prod_{s=t_1}^{t_2-1}\phi(e^{\img r},\e;s) dr
$
and the uniform $ v $-H\"{o}lder continuity of $ x \mapsto e^{\img x} $, $ x\in\bbR $,
to obtain
\begin{align}
	&
	\label{eq:invshk}
	\hke(t_2,t_1,\xi)
	\leq
	C \int_{-\pi}^{\pi} \prod_{s=t_1}^{t_2-1} \absBK{ \phi(e^{\img r},\e;s) } dr,
\\
	&
	\label{eq:invshkx}
	\absBK{ \hke(t_2,t_1,\xi) - \hke(s_1,s_2,\xi') }
	\leq
	C \int_{-\pi}^{\pi} (|\xi-\xi'|r)^{v} \prod_{s=t_1}^{t_2-1} \absBK{ \phi(e^{\img r},\e;s) } dr.
\end{align}
To further bond these expressions,
we apply \eqref{eq:RWTaylor} for $ x = e^{\img r} $ to obtain
\begin{align}\label{eq:chasmall}
	\absBK{ \phi(e^{\img r},\e;s) }  \leq 1 - \e r^2/C,	
	\quad
	\forall s\in\N, \ \forall r\in\bbR \text{ with } |r|\leq r_0,
\end{align}
where $ r_0>0 $ is a constant.
As for $ |r|>r_0 $, we let $ f(n,\e;s) := \Pro(\RW_{\e}(s)=n-\drifte(s)) $,
whereby $ \phi(e^{\img r},\e;s) = \sum_{n\in\N} e^{\img r(n-\drifte(s))}f(n,\e;s) $.
Expressing the $ n=0 $ term as the sum of $ e^{-\img r\drifte(s)}f(1,\e;s)$
and $e^{-\img r\drifte(s)}(f(0,\e;s)-f(1,\e;s)) $,
and then combining the former with the $ n=1 $ term,
we obtain
\begin{align*}
	\phi(e^{\img r},\e;s)
	&=
	e^{-\img r\drifte(s)} (1+e^{\img r}) f(1,\e;s)
	+  e^{-\img r\drifte(s)}(f(0,\e;s)-f(1,\e;s))
\\
	&
	\quad
	+ \sum_{n>1} e^{\img r(n-\drifte(s))} f(n,\e;s).
\end{align*}
Taking the absolute value of this yields
\begin{align}\label{eq:cha}
	\absBK{ \phi(e^{\img r},\e;s) }
	\leq
	|1+e^{\img r}| f(1,\e;s)
	+  |f(0,\e;s)-f(1,\e;s)|
	+ \sum_{n>1} f(n,\e;s).
\end{align}
By \eqref{eq:RWt} and \eqref{eq:RW},
we find that $ f(0,\e;s) > f(1,\e;s) > \e/C $.
Using this and $ \sum_{n=0}^\infty f(n,\e;s) =1 $ in \eqref{eq:cha}, we then obtain the bound
\begin{align}\label{eq:chalarge}
	|\phi(e^{\img r},\e;s)|
	\leq
	f(1,\e;s) |1+e^{\img r}| + (1-2f(1,\e;s))
	\leq
	1- \e/C,
	\
	\forall s\in\N, \ |r|> r_0.
\end{align}
Now, combining \eqref{eq:chasmall} and \eqref{eq:chalarge},
we thus obtain $ |\phi(e^{\img r},\e;s)| \leq 1 - \e r^2/C $, for all $ r\in\bbR $ and $ s\in\N $.
Plugging this in \eqref{eq:invshk}--\eqref{eq:invshkx},
we conclude
$
	\prod_{s=t_1}^{t_2-1} \absBK{ \phi(e^{\img r},\e;s) }
	\leq
	 e^{-\e r^2(t_2-t_1)/C},
$
for all $ \e\in(0,1] $ and $ r\in(-\pi,\pi) $.
Using this, further integrating over $ r\in(-\pi,\pi) $,
we conclude \eqref{eq:hkbdd}--\eqref{eq:hkx}.
\end{proof}

We proceed to establish moment bounds on $ Z $.
Roughly speaking, we expect $ \mgg(t,\xi) $ (defined in~\eqref{eq:mgg}) to be effectively (as $ \e\to 0 $) delta-correlated in $ \xi $.
To see this, consider the pseudo generator
(as the true generator is $ \gen(s,\zeta):=\hke(s+1,s,\zeta) - \ind_\curBK{\zeta=0} $)
\begin{align}\label{eq:genps}
	\tag{4.22}
	\genps(s,\zeta) := \hke(s+1,s,\zeta) - \ind_\curBK{\zeta=-\drifte(s)},
	\quad
	\zeta \in (\N-\drifte(s)),
\end{align}
and rewrite $ \qv_1(s,\zeta) $ and $ \qv_2(s,\zeta) $ (as in \eqref{eq:qv1}--\eqref{eq:qv2}) as
\begin{align}
	&
	\tag{4.23}	
	\label{eq:qv1:gen}
	\qv_1(s,\zeta)
	= (\qe \cente(t) -1 ) Z(s,\zeta) - \big[ \genps(s) * Z(s) \big] (\zeta-\drift(s)),
\\
	&
	\tag{4.24}
	\label{eq:qv2:gen}
	\qv_2(s,\zeta)
	= (1-\cente(t) ) Z(s,\zeta) + \big[ \genps(s) * Z(s) \big] (\zeta-\drift(s)).	
\end{align}
By using \eqref{eq:RWt} and \eqref{eq:RW},
with $ (1-\qe) \leq C \e $,
it is not hard to show that
\begin{align}\label{eq:genBd}
	\tag{4.25}
	|\genps(s,\zeta)| \leq C \e \big( \tfrac{(\nu+\alphae(s))\dens}{1+\alphae(s)} \big)^{|\zeta|},
\end{align}
for some $ C<\infty $.
With $ \tfrac{(\nu+\alphae(s))\dens}{1+\alphae(s)} \leq \frac{\nu+\alpha}{1+\alpha} <1 $,
the r.h.s.\ of~\eqref{eq:genBd} decays exponentially in $ |\zeta| $, which suggests that 
$ \mgg(t,\xi) $, $ \xi\in\Xi(t) $, decorrelates for $ \xi $'s that are far apart. 
The preceding calculations are at the level of second moment.
As we will be working with arbitrarily high moments, we begin by preparing a technical result that exposes 
the aforementioned decorrelation structure.

\begin{remark}
\label{rmk:error}
The published version \cite{corwin17} of this article contains 
an erroneous application of Burkholder’s inequality in \cite[Lemma~4.3]{corwin17}.
We have remedied this issue in the present version
from Lemma~\ref{lem:mgg:decor} until the end of Section~\ref{sect:mom}.
\end{remark}

\begin{lemma}
\label{lem:mgg:decor}
Given $ s\in\N $, $ k\in\bbZ_{>0} $, and a deterministic function $ f: \Xi(s)\to\bbR $,
there exists $ C=C(k) $ such that
\begin{align}
\setcounter{section}{4}
\setcounter{equation}{25}
\label{eq:mgdecor}
	\BVert \sum_{\xi\in\Xi(s)} f(\xi) Z(s,\xi)\mgg(s,\xi) \BVert^2_{2k}
	\leq
	C \e^{2} \! \sum_{j=2,3} 
	\Big(
		\sum_{\vec{\xi}\in\Xi(s)^j}
		\prod_{i=1}^j e^{-\frac{|\xi_i-\xi_1|}{C}} |f(\xi_i)| \, \bVert Z(s,\xi_i) \bVert_{2k} 
	\Big)^{\frac{2}{j}}.
\end{align}
\end{lemma}
\begin{proof}
Fixing $ k\in\bbZ_{>0} $ and $ \vec{\xi}\in\Xi^{2k}_{\leq}(s) $,
throughout this proof we write $ C=C(k) $ to simplify notation.
Referring to the notation in~\eqref{eq:moveI},
we see that conditioning on $ \scrF(s) $ amounts to fixing $ \vec{g}=\vec{g}(s) $.
This being the case, we henceforward fix $ \vec{g}(s) $
and understand $ \Ex[\,\Cdot\,] $ as being taken with respect to the Bernoulli variables $ \B_i $ and $ \Bp_i $ in~\eqref{eq:moveI}.
Under the prescribed conventions, 
we set $ \moveICent_{n,m} := \moveI_{n,m} - \Ex[\moveI_{n,m}] $, fix $ \vec{n} = (n_1 \leq \ldots \leq n_{2k}) $, 
and write
\begin{align}
	\label{eq:mgdecor:1}
	\Ex\Big[\prod_{i=1}^k\moveCent_{n_i}(s) \Big| \scrF(s) \Big]
	=
	\sum_{\vec{m}\in\bbZ^{2k}(\vec{n})} \Ex\Big[ \prod_{i=1}^{2k} \moveICent_{n_i,m_i} \Big],
\end{align}
where $ \bbZ^{2k}(\vec{n}) := \{ (m_1,\ldots,m_{2k}) \in \bbZ^{2k} : m_i \leq n_i, \, i=1,\ldots,2k \} $.
Note that the r.h.s.\ of~\eqref{eq:mgdecor:1} is summable.
Indeed, from~\eqref{eq:moveI}, it is readily checked that
$	
	\Ex[|\moveI_{n,m}|^\ell]=\Ex[|\moveI_{n,m}|] 
%	\prod_{n \geq i > m} \Big( \Pr[\Bp_{i}(s,g_i)=1]\,\Pr[\Bp_{i}(s,g_i)=0] + \Pr[\Bp_{i}(s,g_i)=0]\,\Pr[\Bp_{i}(s,g_i)=1] \Big) \cdot \Pr\big[\Bp_{m}(s,g_m)=1\big]
 	\leq
	\exp(-\frac{1}{C}(n-m)),
$ 
$ \ell \in\bbZ_{>0} $.
In particular,
\begin{align}
	\label{eq:moveICentbd}
	\Ex\big[ | \moveICent_{n,m} |^{2k} \big] \leq C e^{-\frac{1}{C}|n-m|},
\end{align}
which ensures summability of the r.h.s.\ of~\eqref{eq:mgdecor:1}.

The first step of our proof is to bound the r.h.s.\ of~\eqref{eq:moveICentbd}.
For $ \vec{m}\in\bbZ^{2k}(\vec{n}) $, we say a pair $ m_i,m_{i'} $ of coordinates with $ i<i' $ are connected if $ m_{i'} \leq n_i $.
If $ m_i,m_{i'} $ are connected, all coordinates $ m_{i+1},\ldots,m_{i'-1} $ in between are also connected to $ m_i $ and $ m_{i'} $.
This gives rise to a partition of $ (m_1,\ldots,m_{2k}) $ into segments
\begin{align*}
	(m_{i})_{i\in g_j},	\quad g_{j} = [i_j,i_{j+1})\cap\bbZ,
	\quad
	1=i_1 < \ldots < i_{j(\vec{m})} = 2k,
\end{align*}
with the convention $ j_{j(\vec{m})+1} := 2k+1 $,
and each segment $ (m_i)_{i\in g_j} $ is formed by connected $ m_i $'s.
Referring to~\eqref{eq:moveI}, we have $ \moveICent_{n,m} \in \sigma(\B_i(s),\Bp_i(s),i=m,\ldots,n) $.
This gives
\begin{align}
	\label{eq:factorize}
	\Ex\Big[ \prod_{i=1}^{2k} \moveICent_{n_i,m_i} \Big] = \prod_{j=1}^{j(\vec{m})} \Ex\Big[ \prod_{i\in g_j}\moveICent_{n_i,m_i} \Big].
\end{align}
Further, since $ \Ex[\moveICent_{n,m}]=0 $, for the r.h.s.\ of~\eqref{eq:factorize} to be nonzero, we must have $ \# g_j >1 $.
On the r.h.s.\ of~\eqref{eq:factorize}, apply H\"{o}lder's inequality
$
	\Ex[ \prod_{i\in g_j} \moveICent_{n_i,m_i} ]
	\leq
	( \prod_{i\in g_j} \Ex[ |\moveICent_{n_i,m_i}|^{2k} ] )^{\frac{1}{2k}} 
$
and~\eqref{eq:moveICentbd} to get
\begin{align}
	\label{eq:factorize:bd}
	\Ex\Big[ \prod_{i=1}^{2k} \moveICent_{n_i,m_i} \Big] 
	\leq
	\prod_{j=1}^{j(\vec{m})} \prod_{i\in g_j}e^{-\frac{1}{C}(n_i-m_i)}.
\end{align}
%\begin{align}
%	\label{eq:mgdecor:3}
%	\Big| \Ex\Big[\prod_{i=1}^k\moveCent_{n_i}(s) \Big| \scrF(s) \Big] \Big|
%	\leq
%%	\sum_{\pi\in\Pair'_{2k}}\sum_{\vec{m}\in\pi} \prod_{i=1}^{2k} \Big( \Ex\big[ \big|\moveICent_{n_i,m_i}\big|^{2k} \big] \Big)^{\frac{1}{2k}}
%%	\leq
%	\sum_{\pi\in\Pair_{2k}}\sum_{\vec{m}\in\pi} \prod_{i=1}^{2k} e^{-\frac{1}{C}(n_i-m_i)}.
%\end{align}
Now sum the r.h.s.\ of~\eqref{eq:factorize:bd} over all $ \vec{m} \in \bbZ^{2k}(\vec{n}) $ with $ \# g_j >1 $. 
%Now, for $ \pi=\{g_1,\ldots,g_\ell\} $, $ g_j = [i_j,i_{j+1}-1] $,
%the condition $ \vec{m}\in\pi $ forces 
%\begin{align*}
%	m_i \leq n_{i-1}, \quad i\in(i_j,\ldots,i_{j+1}-1],
%	\quad
%	j=1,\ldots,\ell.
%\end{align*}
%Sum the r.h.s.\ of~\eqref{eq:mgdecor:3} under this condition.
That $ (m_{i})_{i\in g_j} $ are connected implies $ m_{i}\leq n_{i_j} $, $ i\in g_j= [j_i,i_{j+1})\cap\bbZ $.
Hence, each sum over $ m_i $ with $ i\neq j_1,j_2,\ldots $ produces a factor of $ C \exp(-\frac{1}{C}(n_i-n_{i-1})) $;
while each sum over $ m_i $ with $ i=j_1,j_2,\ldots $ produces a factor of $ C $.
This gives
\begin{align}
	\label{eq:mgdecor:4}
	\Big| \Ex\Big[\prod_{i=1}^k\moveCent_{n_i}(s) \Big| \scrF(s) \Big] \Big|
	\leq
%	C \sum_{\pi\in\Pair'_{2k}} \prod_{j=1}^{\ell} \prod_{i=j_{\ell}+1}^{j_{\ell+1}-1} e^{-\frac{1}{C}(n_i-n_{i-1})}
%	=
	C \sum_{\pi\in\Pair'_{2k}} \prod_{U\in\pi} e^{-\frac{1}{C}|n|_{U}},
\end{align}
Here $ \Pair'_{2k} $ denotes the set of partitions $ \pi $ of $ \{1,2,\ldots,2k\} $ into intervals $ U $ with $ \# U \geq 2 $,
and $ |n|_U := n_{i^*}-n_{i_*} $ for $ U=[i_*,i^*]\cap\bbZ $.
%For example, 
%\begin{align*}
%	&
%	\Pair_6 
%	= 
%	\big\{ \set{ [1,2], [3,4], [5,6] }, \set{ [1,3],[4,6] } \big\},&
%	&
%	\Pair'_6 
%	= 
%	\Pair_6 \cup
%	\big\{ \set{ [1,2], [3,6] }, \set{ [1,4], [5,6] } \big\}.
%\end{align*}
It will be convenient to consider only intervals $ U $ with $ \# U=2,3 $.
Indeed, a longer interval can be `chopped'  into smaller intervals,
for example for $ g=[1,5] $,
$
	e^{-\frac{1}{C}|n|_{g}}
	=
	e^{-\frac{1}{C}(n_{5}-n_1)}
	\leq
	e^{-\frac{1}{C}(n_{2}-n_1)} e^{-\frac{1}{C}(n_{5}-n_{3})}.
$
More generally, $ e^{-\frac{1}{C}|n|_{U}} \leq \prod_{i=1}^\ell e^{-\frac{1}{C}|n|_{U_i}} $,
for some $ U_i\subset U $ with $ \# U_i=2,3 $.
Using this in~\eqref{eq:mgdecor:4} gives
\begin{align}
	\label{eq:mgdecor:}
	\Big| \Ex\Big[\prod_{i=1}^k\moveCent_{n_i}(s) \Big| \scrF(s) \Big] \Big|
	\leq
	C \sum_{\pi\in\Pair_{2k}} \prod_{U\in\pi} e^{-\frac{1}{C}|n|_{U}},
\end{align}
where $ \Pair_{2k} $ denote the set of partitions $ \pi $ of $ \{1,2,\ldots,2k\} $ into intervals $ U $ with $ \#U = 2, 3 $.

Having established~\eqref{eq:mgdecor:}, we now proceed to show~\eqref{eq:mgdecor}.
Recall from~\eqref{eq:mgg} that $ \mgg(s,\xi) $ is given in term of $ \moveCent_{n}(s) $,
and that, under current scaling, $ |1-\qe| \leq C\e $ and $ |\cente(t)| \leq C $ (from~\eqref{eq:centte}).
Setting $ G:= \sum_{\xi\in\Xi(s)} f(\xi) Z(s,\xi)\mgg(s,\xi) $, we write
\begin{align}
	\label{eq:mgdecor:5}
	\Ex [ G^{2k} | \scrF(s) ]
	\leq
	C\e^{2k}
	\sum_{\vec{\xi}\in\Xi(s)^{2k}}
	\prod_{i=1}^{2k} |f(\xi_i) Z(s,\xi_i) | \cdot
	\Big| \Ex\Big[ \prod_{i=1}^{2k} \moveCent_{\xi_i+\Drift(s)}(s) \Big] \Big|.
\end{align}
Since the summand on the r.h.s.\ of~\eqref{eq:mgdecor:5} is symmetric in $ \xi_1,\ldots,\xi_{2k} $,
we may order the points $ \xi_1\leq\ldots\leq\xi_{2k} $ at a cost of a factor $ (2k)!  = C $.
Applying~\eqref{eq:mgdecor:} to the result, we get
\begin{align}
	\label{eq:mgdecor:6}
	\Ex [ G^{2k} | \scrF(s) ]
	\leq
	C\e^{2k}
	\sum_{\pi\in\Pair_{2k}} \prod_{U\in\pi} 
	\Big( \sum_{\xi_U\in \Xi^U_{\leq}(s)} e^{-\frac{1}{C}|\xi|_U} \prod_{i\in U} |f(\xi_i)| Z(s,\xi_i) \Big).
\end{align}
Here, for an interval $ U=[i,i']\cap\bbZ $, $ \Xi^U_{\leq}(s) := \{ \xi_{i} \leq \ldots \leq \xi_{i'}\in\Xi(s) \} $,
and $ |\xi|_U := |\xi_{i'}-\xi_i| $.
In~\eqref{eq:mgdecor:6}, apply Young's inequality $ \prod_{U\in\pi} |a_U| \leq \sum_{U\in\pi} \frac{\# U}{2k}|a_U|^{\frac{2k}{\# U}} $
and use $ \#g =2,3 $ to get
\begin{align*}
	\Ex [ G^{2k} | \scrF(s) ]
	&\leq
	C\e^{2k}
	\sum_{j=2,3}
	\Big( \sum_{\vec{\xi}\in \Xi^{[1,j]}_{\leq}(s)} e^{-\frac{1}{C}|\xi_j-\xi_1|} \prod_{i=1}^j|f(\xi_i)|Z(s,\xi_j) \Big)^{2k/j}.
\end{align*}
Further bound $ \exp(-\frac1C|\xi_j-\xi_1|) \leq \exp( -\frac{1}{jC} \sum_{i=1}^j |\xi_i-\xi_1| ) $,
and release the sum from $ \Xi^{[1,j]}_{\leq}(s) $ to $ \vec{\xi}\in \Xi(s)^j $.
Take $ \Ex[\,\Cdot\,] $ on both sides of the result, and take $ (\Cdot)^{1/k} $ on both sides using $ (\sum_{j=2,3}|a_j|)^{1/k} \leq 2 \sum_{j=2,3} |a_j|^{1/k} $.
This gives
\begin{align*}
	\Vert G \Vert^2_{2k}
	\leq
	&C \e^{2} \sum_{j=2,3} 
	\Big(	
		\BVert \sum_{\vec{\xi}\in\Xi(s)^j} \prod_{i=1}^j e^{-\frac{|\xi_i-\xi_1|}{C}} |f(\xi_i)|  Z(s,\xi_i) \BVert_{2k/j} \Big)^{2/j}.
%\\
%	\leq
%	&C \e^{2} \sum_{j=2,3} \Big(\sum_{\vec{\xi}\in\Xi(s)^j}  \prod_{i=1}^je^{-\frac{|x_i-x_1|}{C}} |f(\xi_i)| \cdot  \BVert \prod_{i=1}^j Z(s,\xi_i) \BVert_{2k/j} \Big)^{2/j}.
\end{align*}
Using triangle inequality to pass $ \Vert\Cdot\Vert_{2k/j} $ into the sum (over $ \vec{\xi} $),
followed by applying H\"{o}lder's inequality:
$ \Vert \prod_{i=1}^j Z(s,\xi_i) \Vert_{2k/j} \leq \prod_{i=1}^j \Vert  Z(s,\xi_i) \Vert_{2k} $,
we conclude~\eqref{eq:mgdecor}.
\end{proof}

Recall the definition of~$ \Zmg(t_2,t_1,\xi) $ from~\eqref{eq:Zmg}.
Based on Lemma~\ref{lem:mgg:decor}, we proceed to establish bounds on $ \Zmg $.
Write $  \nabla^nf(\xi):= f(\xi+n)-f(\xi) $ for $ n $-step gradient, 
and set $ {\nabla^n_v}f(\xi) := (\e|n|)^{-v} \nabla^n f(\xi) \ind_{\{n\neq 0\}} $.
For a process $ G(s,\xi) $, $ s\in\N $, $ \xi\in\Xi(s) $, define the norm
\begin{align}
	\label{eq:norm}
	&\norm{G(t)}{2k,u,v} 
	:= 
	\sup_{\xi\in\Xi(t)} \sup_{n\in\bbZ} 
	\Big\{ e^{-u\e|\xi|} \Vert G(t,\xi) \Vert_{2k} + e^{-u\e|\xi|-u\e|\xi+n|}\,\Vert {\nabla^n_{v}}G(t,\xi) \Vert_{2k} \Big\}.
\end{align}
\begin{lemma}\label{lem:Zmg:}
For any $ T,u\in(0,\infty) $, $ k\geq 1 $ and $ v\in[0,\frac12] $, there exists $ C=C(u,k,T) $ such that
for all $ t_1\leq t_2\in\N\cap[0,\e^{-3}T] $,
\begin{align}
	\label{eq:Zmgbd1}
	\norm{ \Zmg(t_2,t_1) }{2k,u,v}^2
	\leq
	C\, \e^{3}\sum_{s=t_{1}}^{t_2-1} (\e^{3}(t_2-s+1))^{-\frac{1}2-v}  \norm{Z(s)}{2k,u,v}^2,
\end{align}
where $ \Zmg(t_2,t_1,\xi) $ is viewed as a process in $ (t_2,\xi) $.
\end{lemma}
\begin{proof}
Fix $ T,u\in(0,\infty) $, $ k\geq 1 $ and $ v\in[0,\frac12] $.
To simplify notation, throughout this proof we write $ C=C(u,k,T) $ and $ \xi_{+s} := \xi+\drift(s) $, and set
\begin{align}
	\label{eq:aux}
	\aux_a(s,\xi,G) 
	:= 
	(\e^{3}(t_2-s+1))^{-a} \sup_{n\in\bbZ} \big[ \hke(t_2,s+1)*(\Vert G(s) \Vert^2_{2k} + \Vert {\nabla^n_{v}}G(s) \Vert^2_{2k}) \big](\xi_{+s}).
\end{align}
We begin by deriving the following inequalities:
\begin{align}
	\label{eq:Zmg:model}
	\Vert \Zmg(t_2,t_1,\xi) \Vert^2_{2k}
	&\leq
	\e^{3}\sum_{s=t_1}^{t_2-1} \aux_{\frac{1}{2}+v}(s,\xi,Z),
\\
	\label{eq:Zxmg:model}	
	\Vert \nabla^n_{v}(\Zmg)(t_2,t_1,\xi) \Vert^2_{2k}
	&\leq
	\e^{3}\sum_{s=t_1}^{t_2-1} ( \aux_{\frac{1}{2}+v}(s,\xi,Z) + \aux_{\frac{1}{2}+v}(s,\xi+n,Z) \big),
\end{align}

To derive~\eqref{eq:Zmg:model},
let $ F_{\hke}(s,\zeta) := [ \hke(t_2,s+1)* \BK{ Z(s)\mgg(s) } ] (\zeta ) $,
and consider the discrete time martingales
$
 	M_{\hke}(t)
 	:= \textstyle \sum_{s=t_{1}}^{t-1} F(s,\xi_{+s}),
$
for $ t=t_1+1,\ldots,t_2 $.
Given that $ M_{\hke}(t_2) = \Zmg(t_2,t_1,\xi) $,
Burkholder's inequality applied to the martingale $ M_{\hke}(t) $ gives
\begin{align}
	\label{eq:Zmgbd:1}
	\Vertbk{ \Zmg(t_2,t_1,\xi) }_{2k}^2
	\leq
	C
	\BVert \sum_{s=t_{1}}^{t_2-1} F_{\hke}(s,\xi_{+s})^2 \BVert_k
	\leq
	C
	\sum_{s=t_{1}}^{t_2-1}
	\Vert F_{\hke}(s,\xi_{+s})  \Vert^2_{2k}.
\end{align}
On the r.h.s.\ of~\eqref{eq:Zmgbd:1},
applying Lemma~\ref{lem:mgg:decor} with $ f(s,\zeta)=\hke(t_2,s+1,\xi_{+s}-\zeta) $,
we have
\begin{align}
	\label{eq:Zmgfirst}
	\Vertbk{ \Zmg(t_2,t_1,\xi) }_{2k}^2
	\leq
	C\e^{2} \sum_{s=t_{1}}^{t_2-1}
	\sum_{j=2,3}
	\Big( 
	\sum_{\vec{\xi}\in\Xi(s)^j}
	\prod_{i=1}^j \hke(t_2,s+1,\xi_{+s}-\xi_i) \Vert Z(s,\xi_i) \Vert_{2k} \, g(\xi_i-\xi_1) \Big)^{\frac{2}{j}},
\end{align}
where $ g(\zeta) := \exp(-\frac{1}{C}|\zeta|) $.
On the r.h.s.\ of~\eqref{eq:Zmgfirst},
sum over $ \xi_3\in\Xi(s) $ (for $ j=3 $) using Cauchy--Schwarz inequality,
and bound $ \hke(t_2,s+1)^2 \leq \hk(t_2,s+1) C\, \e^{-\frac12}(t_2-s+1)^{-\frac12} $ (by~\eqref{eq:hkbdd}).
This gives
\begin{align}
	\notag
	&\sum_{\xi_3\in\Xi(s)} \Big( \hke(t_2,s+1,\xi_{+s}-\xi_3) \Vert Z(s,\xi_3) \Vert_{2k}\, g(\xi_3-\xi_1) \Big)
\\
	\label{eq:Zmgfirst:}
	&\leq
	\Big( \Big[ \frac{C\e^{-\frac12}\hke(t_2,s+1)}{\sqrt{t_2-s+1}}*\Vert Z(s) \Vert_{2k}^2 \Big](\xi_{+s}) \Big)^\frac12 
	\Big( \sum_{m\in\bbZ} g(m) \Big)^{\frac12}	
	\leq
	C\,\Big( \e \aux_{\frac12}(s,\xi,Z) \Big)^\frac12.
\end{align}
Next, for the sum over $ \xi_1,\xi_2\in\Xi(s) $ (for $ j=2,3 $),
write $ Z(s,\xi_2)\leq Z(s,\xi_1) + (\e n)^{v} |\nabla^n_vZ(s,\xi_1)| \leq (1+|n|)(Z(s,\xi_1)+|\nabla^n_vZ(s,\xi_1)|) $,
where $ n:= \xi_2-\xi_1 $,
and bound $ \hke(t_2,s+1,\xi-\xi_2) $ by $ C\e^{-\frac12}(t_2-s+1)^{-\frac12} $.
This gives
\begin{align*}
	&\sum_{\xi_1,\xi_2\in\Xi(s)} \prod_{i=1}^2\Big( \hke(t_2,s+1,\xi_{+s}-\xi_i) \Vert Z(s,\xi_i) \Vert_{2k}\, g(\xi_i-\xi_1) \Big)
\\
	&\leq
	\sup_{n\in\bbZ}
	\Big[ \hke(t_2,s+1)*\Big( \Vert Z(s)\Vert_{2k} \,\big(\Vert Z(s) \Vert_{2k}+\Vert \nabla^n_vZ(s) \Vert_{2k}\big)\Big)\Big](\xi_{+s}) \, 
	\sum_{m\in\bbZ} \frac{C\,\e^{-1/2}}{\sqrt{t_2-s+1}}(1+|m|)g(m).
\end{align*}
Indeed, the sum over $ m\in\bbZ $ contributes $ C\,\e^{-1/2}(t_2-s+1)^{-1/2} $.
Further bounding\\
$
	\Vert Z(s)\Vert_{2k} \,(\Vert Z(s) \Vert_{2k}+\Vert \nabla^n_vZ(s) \Vert_{2k})
	\leq
	C
	\Vert Z(s)\Vert_{2k}^2+ C\Vert \nabla^n_vZ(s) \Vert_{2k}^2,	
$
we have
\begin{align}
	\label{eq:Zmgfirst::}
	\sum_{\xi_1,\xi_2\in\Xi(s)} \prod_{i=1}^2\Big( \hke(t_2,s+1,\xi_{+s}-\xi_i) \Vert Z(s,\xi_i) \Vert_{2k}\, g(\xi_i-\xi_1) \Big)
	\leq
	C\,\e \aux_{\frac12}(s,\xi,Z).
\end{align}
Inserting~\eqref{eq:Zmgfirst:}--\eqref{eq:Zmgfirst::} into~\eqref{eq:Zmgfirst} yields
%\begin{align*}
$
	\Vert \Zmg(t_2,t_1,\xi) \Vert^2_{2k}
	\leq
	\e^{3}\sum_{s=t_1}^{t_2-1} \aux_{\frac12}(s,\xi,Z).
$
%\end{align*}
Further, since $ t_2-s \leq \e^{-3}T $, we have 
$ \aux_{\frac12}(s,\xi,Z) \leq C\, \aux_{\frac{1}2+v}(s,\xi,Z) $,
and hence conclude~\eqref{eq:Zmg:model}.
The bound~\eqref{eq:Zxmg:model} follows the same argument as in the preceding, with $ (\hke)_{\nabla^n} $ in place of $ \hke $,
and with~\eqref{eq:hkx} in place of~\eqref{eq:hkbdd}.

Having derived~\eqref{eq:Zmg:model}--\eqref{eq:Zxmg:model},
we now proceed to show~\eqref{eq:Zmgbd1}.
Referring to the definition~\eqref{eq:norm} of $ \norm{\Cdot}{2k,u,v} $, we have
\begin{align*}
	\Vert Z(s,\zeta) \Vert^2_{2k} + \Vert \nabla^n_{v} Z(s,\zeta) \Vert^2_{2k} 
	\leq 
	C\, \norm{Z(s)}{2k,u,v}^2 \, e^{2u\e|\zeta|+2u\e|\zeta+n|}. 
\end{align*}
From the heat kernel bound~\eqref{eq:hksum} and with $ |\drift(s)| \leq C $, is it readily checked that
\begin{align}
	\label{eq:hksum:}
	\sum_{\zeta\in\Xi(s)} \hke(t_2,s+1,\xi_{+s}-\zeta) e^{\e u_1|\zeta|+\e u_2|\zeta+n|} \leq C(u_1,u_2) e^{\e u_1|\xi|+\e u_2|\xi+n|}.
\end{align}
Combining these bounds together gives 
\begin{align*}
	\aux_{\frac{1}2+v}(s,\xi,Z) \leq C\, (\e^{3}(t_2-s+1))^{-\frac12-v} \norm{Z(s)}{2k,u,v}^2 e^{2u\e|\xi|}.
\end{align*}
Inserting this bound into~\eqref{eq:Zmg:model}--\eqref{eq:Zxmg:model}, and summing the results together gives~\eqref{eq:Zmgbd1}.
\end{proof}

\begin{proof}[Proof of Proposition~\ref{prop:tight}]
Fix a collection of near equilibrium initial conditions,
with the corresponding $ u=u(k,v) $ (as in Definition~\ref{def:neareq}),
and fix $ T<\infty $, $ k\geq 1 $ and $ v\in[0,1/2) $.
We prove the following moment estimates:
\begin{align}
	&
	\tag{4.30}
	\label{eq:mom1}
	\Vertbk{ Z(t,r) }_{2k}
	\leq
	C e^{u\e|r|},
\\
	&
	\tag{4.31}
	\label{eq:momx1}
	\Vertbk{ Z(\tau,r) - Z(\tau,r') }_{2k}
	\leq
	C \BK{ \e|r-r'| }^{v} e^{u\e(|r|+|r'|)} ,
\\
	&
	\tag{4.32}
	\label{eq:momt1}
	\Vertbk{ Z(\tau,r) - Z(\tau',r) }_{2k}
	\leq
	C \BK{ \e^3|\tau'-\tau| }^{v/2} e^{2u\e|r|},
\end{align}
for some $ C=C(T,k,v)<\infty $
and for all $ \tau,\tau'\in [0,\e^{-3}T] $, $ r,r'\in\bbR $ and $ \e>0 $ small enough.
These estimates,
by the Kolmogorov--Chentsov criterion of tightness \cite[Corollary 14.9]{kallenberg02},
immediately imply the tightness of $ \{\Ze(\Cdot,\Cdot)\} $ in $ C(\bbR_+\times\bbR) $.

By definition, $ Z(\tau,r) $ is defined on $ \bbR_+\times\bbR $ by linear interpolation,
so without lost of generality we assume
$ \tau=t, \tau'=t'\in \N\cap[0,\e^{-3}T] $ and $ r=\xi, r'=\xi'\in\Xi(t) $,
and prove \eqref{eq:mom1}--\eqref{eq:momt1} as follows.
\end{proof}

\begin{proof}[Proof of \eqref{eq:mom1}--\eqref{eq:momx1}]
Recall that $ \norm{\Cdot}{2k,u,v} $ is a norm (see~\eqref{eq:norm}),
and in particular enjoys triangle inequality.
Take $ \norm{\Cdot}{2k,u,v} $ on both sides of~\eqref{eq:dSHE} using triangle inequality, square the result, and apply Lemma~\ref{lem:Zmg:}.
We get
\begin{align}
\label{eq:mom1:1}
\begin{split}
	\norm{Z(t_2)}{2k,u,v}^2
	&\leq
	\big( \norm{Z(t_1)}{2k,u,v} + \norm{Z(t_2,t_1)}{2k,u,v} \big)^2
\\
	&\leq
	C\,\norm{Z(t_1)}{2k,u,v}^2 
	+ 
	C\,\e^{3}\sum_{s=t_{1}}^{t_2-1} (\e^{3}(t_2-s+1))^{-\frac{1}2-v} \norm{Z(s)}{2k,u,v}^2.
\end{split}
\end{align}
Set $ w(t_2,t_1) := \max_{t\in[t_1,t_2]} \norm{Z(t_1)}{2k,u,v}^2 $.
Our goal is to show $ w(\e^{-3}T,0) \leq C $
(which implies the desired bounds~\eqref{eq:mom1}--\eqref{eq:momx1}).
To this end, take maximum of \eqref{eq:mom1:1} over $ t =t_1,t_1+1,\ldots,t_2 $ to get
\begin{align}
	\label{eq:mom1:w}
	w(t_2,t_1)
	\leq
	C\, \norm{Z(t_1)}{2k,u,v}^2
	+
	w(t_2,t_1) \, \max_{t\in[t_1,t_2]} V(t),
\end{align}
for some $ V(t) $ such that $ V(t)\leq C \e^{3}\sum_{s=t_{1}}^{t-1} (\e^{3}(t-s+1))^{-1/2-v} $.
This sum can be estimated via approximation with an integral, giving $ V(t) \leq C_* \, (\e^{3}(t_2-t_1+1))^{1/2-v} $,
for some constant $ C_*=C_*(u,k,T)<\infty $.
Given that $ v<1/2 $, we now fix $ T_*=T_*(v,u,k,T)>0 $ small enough so that $ C_*T_*^{1/2-v} < 1/2 $. 
This together with~\eqref{eq:mom1:w} gives
\begin{align}
	\label{eq:mom1:progresss}
	w(t_2,t_1)
	\leq
	C\, \norm{Z(t_1)}{2k,u,v}^2,
	\quad
	t_1\leq t_2 \in[0,\e^{-3}T],
	\
	t_2-t_1 \leq \e^{-3} T_*.
\end{align}
From~\eqref{eq:neareq:mom}--\eqref{eq:neareq:momx} and~\eqref{eq:hksum:},
it is readily checkd that $ \norm{Z(0)}{2k,u,v}^2 \leq C $.
Given this, starting from $ t=0 $,
we inductively progress by $ \e^{-3}T_* $ in time through~\eqref{eq:mom1:progresss}.
After $ \lceil T/T_* \rceil $ progressions, we arrive at $ w(\e^{-3}T,0) \leq C^{\lceil T/T_* \rceil }\norm{Z(0)}{2k,u,v}^2 = C $. 
\end{proof}

\begin{proof}[Proof of \eqref{eq:momt1}]
Without lost of generality we assume $ t'\leq t $.
By \eqref{eq:dSHE} we have
\begin{align*}
	Z(t,\xi) - Z(t',\xi)
	=
	\BK{ \Zdr(t,t',\xi) - Z(t',\xi) }
	+
	\Zmg(t,t',\xi).
\end{align*}
We bound separately
$ \Zdr^{*} := \Vertbk{ \Zdr(t,t',\xi) - Z(t,\xi) }_{2k} $ and
$ \Zmg^{*} := \Vertbk{ \Zmg(t,t',\xi)}_{2k} $.

For $ \Zdr^{*} $, with $ \sum_{\zeta\in\Xi(t')} \hke(t,t',\xi-\zeta) =1 $, we have
\begin{align*}
	\BK{ \Zdr(t,t',\xi) - Z(t',\xi) }
	=
	\sum_{\zeta\in\Xi(t')} \hke(t,t',\xi-\zeta)
	\BK{ Z(t',\zeta) - Z(t',\xi) }.
\end{align*}
Take $ \Vert\Cdot\Vert_{2k} $ on both sides,
and then use \eqref{eq:momx1} to bound $ \Vert Z(t',\zeta) - Z(t',\xi) \Vert_{2k} $
by
$
	C(\e|\xi-\zeta|)^v e^{u\e(|\xi|+|\zeta|)}
	\leq
	C(\e|\xi-\zeta|)^v e^{u\e|\xi-\zeta|+2u\e|\xi|}.
$
Using \eqref{eq:hksumt} to sum over $ \zeta $,
we then obtain the desired bound
$
	\Zdr^*
	\leq
	C (\e^3|t-t'|)^{v/2} e^{2u\e|\xi|}.
$

As for $ \Zmg^{*} $, combining \eqref{eq:Zmgbd1} for $ v\mapsto \frac12-v $ and $ \norm{Z(s)}{2k,u,v} \leq C $ 
(as shown previously) gives
$
	(\Zmg^*)^2
	\leq
	C e^{2u\e|\xi|}	
	\e^{3} \sum_{s=t'}^{t-1} (\e^{3}(t-s+1))^{-1+v}.
$
The last sum can be estimated by approximation with an integral, giving the desired bound
$
	(\Zmg^{*})^2
	\leq
	C e^{2u\e|\xi|} (\e^3|t-t'|)^{v}.
$
\end{proof}
%
%\begin{proof}[Proof of Proposition~\ref{prop:tight}]
%For near equilibrium initial conditions, by definition we have
%$ \norm_{2k,u}(0), \normx_{2k,u,v}(0) \leq C $
%for any $ k\geq 1 $ and $ v\in(0,1/2) $,
%and for some $ u=u(k,v)<\infty $ and $ C=C(k,v) < \infty $.
%Combining this with \eqref{eq:mom1}--\eqref{eq:momt1},
%we immediately obtain the following:
%for any $ T>0 $, $ v\in(0,1/2) $ and $ k\in\bbZ_{>0} $, there exists
%$ u=u(k,v)<\infty $ and $ C=C(T,k,v)<\infty $ such that
%\begin{align}
%	&
%	\label{eq:mom}
%	\Vertbk{ Z(\tau,r) }_{2k} e^{-u\e|\xi|} \leq C,
%\\
%	&
%	\label{eq:momx}
%	\Vertbk{ Z(\tau,r) - Z(\tau,r') }_{2k} e^{-u\e(|\xi|+|\xi'|)}
%	\leq
%	C,
%\\
%	&
%	\label{eq:momt}
%	\Vertbk{ Z(\tau,\xi) - Z(t',\xi) }_{2k} e^{-u\e|\xi|}
%	\leq
%	C,
%\end{align}
%and for all $ \tau,\tau'\in [0,\e^{-3}T] $, $ r,r'\in\bbR $ and $ \e>0 $ small enough.
%This, by the Kolmogorov--Chentsov criterion of tightness \cite[Corollary 14.9]{kallenberg02},
%immediately implies the tightness of $ \{\Ze\}_\e $.
%\end{proof}
%
%
This completes the proof of Proposition~\ref{prop:tight}.
We now turn to the proof of Proposition~\ref{prop:stepEst}.

\begin{proof}[Proof of Proposition~\ref{prop:stepEst}]
Fix $ k\in\N $, $ v\in(0,\frac12) $, and $ T\in(0,\infty) $.
As explained at the beginning of the proof of Proposition~\ref{prop:tight},
without lost of generality we let $ \tau=t_2\in\N\cap (0,\e^{-3}T] $
and $ r=\xi, r'=\xi'\in\Xi(t) $.
To simplify notation we write $ C=C(k,v,T) $.

Recall that $ \Zd(t,\xi) := \xcent \e^{-1}(1-\dens) Z(t,\xi) $.
Similarly define $ \Zddr(t,\xi) := [\hk(t,0)*\Zd(0)](\xi) = \xcent \e^{-1}(1-\dens) \Zdr(t,\xi) $ 
and $ \Zdmg(t,\xi) := \xcent \e^{-1}(1-\dens) \Zmg(t,0,\xi) $ so that $ \Zd=\Zddr+\Zdmg $.
The first step of the proof is to derive a bound similarly to the one in Lemma~\ref{lem:Zmg:}.
To this end, for a process $ G(t,\xi) $, $ t\in\N $, $ \xi\in\Xi(t) $, consider
\begin{align*}
%	\label{eq:normd}
	\normd{G(t)}{k,v}
	:=
	(\e^{3}(t+1))^{1+v} \norm{G(t)}{2k,0,v}^2
	+
	(\e^{3}(t+1))^{\frac12+v} \
	\e \!\! \sum_{\xi\in\Xi(t)} \Big( \Vert G(t,\xi) \Vert^2_{2k} + \sup_{n\in\bbZ} \Vert {\nabla^n_{v}}G(t,\xi,n) \Vert^2_{2k} \Big).
\end{align*}
Recall the definition of $ \aux_a(s,\xi,\Cdot) $ from~\eqref{eq:aux}.
In~\eqref{eq:Zmg:model}--\eqref{eq:Zxmg:model}, set $ t_1=0 $,
and multiply both sides of by $ \xcent^2 \e^{-2}(1-\dens)^2 $
so that each $ Z $, $ \Zmg $ is replaced by $ \Zd $, $ \Zdmg $ therein.
Adding the results together, here we have
\begin{align*}
	\Vert \Zdmg(t_2,\xi) \Vert^2_{2k}
	+
	\Vert \nabla^n_{v}(\Zdmg)(t_2,\xi) \Vert^2_{2k}
	\leq
	C\, \e^{3}\sum_{s=0}^{t_2-1} ( \aux_{\frac{1}{2}+v}(s,\xi,\Zd) + \aux_{\frac{1}{2}+v}(s,\xi+n,\Zd)).
\end{align*}
From this and the definition of $ \normd{\Cdot}{k,v} $, we have
\begin{align}
	\label{eq:Zdmg:model}
	\normd{Z(t_2)}{k,v}
	\leq
	C \, \e^{3}\sum_{s=t_1}^{t_2-1} \big( (\e^3(t_2+1))^{1+v} \, \aux^\infty(s) + (\e^3(t_2+1))^{\frac12+v} \,\aux^\Sigma(s) \big),
\end{align}
where $ \aux^\infty(s) := \sup_{\xi\in\Xi(t_2)}\aux_{\frac{1}{2}+v}(s,\xi,\Zd) $
and $ \aux^\Sigma(s) := \sum_{\xi\in\Xi(t_2)}\aux_{\frac{1}{2}+v}(s,\xi,\Zd) $.
To bound the r.h.s.\ of~\eqref{eq:Zdmg:model}, we write
\begin{align}
	\label{eq:auxbd:sum}
	&\sum_{\zeta\in\Xi(s,\zeta)} (\Vert \Zd(s) \Vert^2_{2k}+\Vert \nabla^n_v \Zd(s) \Vert^2_{2k}) \leq \normd{\Zd(s)}{k,v} \, (\e^{3}(s+1))^{-\frac12-v},
\\
	\label{eq:auxbd:sup}
	&\sup_{\zeta\in\Xi(s,\zeta)} (\Vert \Zd(s) \Vert^2_{2k}+\Vert \nabla^n_v \Zd(s) \Vert^2_{2k}) \leq \normd{\Zd(s)}{k,v} \, (\e^{3}(s+1))^{-1-v}.
\end{align}
In~\eqref{eq:aux}, set $ G=\Zd $ and $ a=\frac12+v $, use~\eqref{eq:hkbdd} to bound $ \hke(t_2,s+1) \leq C \e^{-\frac12}(t_2-s+1)^{-\frac12} $,
and use~\eqref{eq:auxbd:sum} to bound the remaining sum. This gives
\begin{align*}
	\aux^\infty(s) \leq  C\,(\e^3(t-s+1))^{-1-v}(\e^3(s+1))^{-\frac12-v} \normd{Z(s)}{k,v}.
\end{align*}
Similarly, using~\eqref{eq:auxbd:sup} and $ \sum_{\xi} \hke(t_2,s+1,\xi)=1 $ yields
\begin{align*}
	\aux^\infty(s) \leq C\,(\e^3(t-s+1))^{-\frac12-v}(\e^3(s+1))^{-1-v} \normd{Z(s)}{k,v}.
\end{align*}
In~\eqref{eq:aux}, summing over $ \xi\in \Xi(t_2) $,
and using~\eqref{eq:auxbd:sum} and $ \sum_{\xi} \hke(t_2,s+1,\xi)=1 $, we have
\begin{align*}
	\aux^\Sigma(s) \leq C\, (\e^3(t-s+1))^{-\frac12-v}(\e^3(s+1))^{-\frac12-v} \normd{Z(s)}{k,v}.
\end{align*}
Combining the preceding bounds on $ \aux^\infty(s) $ and $ \aux^\Sigma(s) $ gives
\begin{align}
	\label{eq:Zdmg:model:}
	\normd{Z(t_2)}{k,v}
	\leq
	C \, \e^{3}\sum_{s=t_1}^{t_2-1} V_*(s) \normd{Z(s)}{k,v},
\end{align}
where, with the notation $ V_{a,b}(s) := (\e^3(t_2-s+1))^{-a}(\e^3(s+1))^{-b} $,
\begin{align*}
	V_*(s) := (\e^3(t_2+1))^{1+v} \big( V_{1+v,\frac12+v}(s) \wedge V_{\frac12+v,1+v}(s) \big) + (\e^3(t_2+1))^{\frac12+v} V_{\frac12+v,\frac12+v}(s).
\end{align*}

Referring to the definition of $ \normd{\Cdot}{k,v} $, we have that 
$ \normd{G_1+G_2}{k,v} \leq 2\normd{G_1}{k,v}+2\normd{G_2}{k,v} $.
Using this and~\eqref{eq:Zdmg:model:} gives 
\begin{align}
	\label{eq:Zdbd}
	\normd{\Zd(t_2)}{k,v} \leq 2 \normd{\Zddr(t_2)}{k,v} + 2 \normd{\Zdmg(t_2)}{k,v}
	\leq
	2 \normd{\Zddr(t_2)}{k,v} + C\, \e^{3}\sum_{s=0}^{t_2-1} V_*(s) \normd{Z(s)}{k,v}.
\end{align}
From~\eqref{eq:hkbdd}--\eqref{eq:hkx}, $ \sum_{\xi} \hk(t_2,0,\xi)=1 $, and \eqref{eq:Zdmass}, it is readily checked that
\begin{align*}
	\Big|\e\sum_{\xi\in \Xi(t_2)} \Zddr(t_2,\xi)\Big| \leq C,
	\quad
	|\Zddr(t_2,\xi)| \leq \frac{C}{(\e^3(t_2+1))^{1/2}},
	\quad
	|\nabla^n \Zddr(t_2,\xi)| \leq \frac{ C\, (\e|n|)^{2v} }{ (\e^3(t_2+1))^{1/2+v} }.
\end{align*}
Using these bounds together with $ |\nabla^n\Zdr(t_2,\xi)|^2 \leq (|\Zddr(t_2,\xi)|+|\Zddr(t_2,\xi+n)|)|\nabla^n \Zddr(t,\xi)| $ gives
\begin{align}
	\label{eq:Zddrt2}
	\normd{\Zddr(t_2)}{k,v} \leq  C.
\end{align}
Next, the sum $ \e^{3} \sum_{s=0}^{t-1} V_*(s) $ can be estimated by approximating with an integral,
yielding
\begin{align}
	\label{eq:V*bd}
	\e^{3}\sum_{s=0}^{t-1} V_*(s) 
	\leq 
	C\, \big(  (\e^{3}(t+1))^{\frac12-v} + (\e^{3}(t+1))^{1-v} \big)
	\leq 
	C\, (\e^{3}(t+1))^{\frac12-v}.	
\end{align}
Now, set $ w(t_2) := \max_{t\in[0,t_2]} \normd{\Zd(t)}{k,v} $.
Take maximum of~\eqref{eq:Zdbd} over $ t=0,1,\ldots, t_2 $,
and use the bounds~\eqref{eq:Zddrt2}--\eqref{eq:V*bd} in~\eqref{eq:Zdbd}.
We have
\begin{align*}
%$
	w(t_2) \leq C + w(t_2)\, C (\e^{3}(t_2+1))^{\frac12-v}.
%$
\end{align*}
With $ v<\frac12 $, we fix $ T_*>0 $  small enough so that $ C T_*^{\frac12-v} < \frac12 $.
This gives $ w(t) \leq C $, $ t \leq \e^{-3} T_* $,
and hence the desired bounds~\eqref{eq:momd}--\eqref{eq:momdx} for $ t\in [0,\e^{-3}T_*] $.
Generalization to $ t\in [\e^{-3}T_*,\e^{-3}T] $ is immediate.
Instead of $ t=0 $, one initiates the process at time $ t_*= \lfloor \e^{-3}T_* \rfloor $.
The bounds~\eqref{eq:momd}--\eqref{eq:momdx} at $ t=t_* $
ensures that $ Z(t_*,\xi) $ is near equilibrium (i.e., satisfying~\eqref{eq:neareq:mom}--\eqref{eq:neareq:momx}).
This being the case, the moment bounds~\eqref{eq:mom1}--\eqref{eq:momx1} applies for $ t\in[t_*,\e^{-3}T] $.
\end{proof}

\section{The Martingale Problem: Proof of Proposition~\ref{prop:unique}}
\label{sect:mg}

Hereafter we use $ \bd(s,\veczeta) $ and $ \er(s,\veczeta) $, $ \veczeta=(\zeta_1,\ldots,\zeta_n) $,
to denote respectively \emph{generic} processes
that are uniformly bounded and uniformly vanishing, i.e.\
\begin{align*}
	&
	\sup\curBK{
		\Vertbk{\bd(s,\veczeta)}_k:
		\veczeta\in(\e^{-1}U)^n, s\leq T\e^{-3}, \e\in(0,1]
		}
	 <\infty,
\\
	&
	\sup\curBK{
		\Vertbk{\er(s,\veczeta)}_k :
		\veczeta\in(\e^{-1}U)^n, s\leq T\e^{-3}
		}
	\longrightarrow 0,
	\quad
	\text{ as } \e\to 0,
\end{align*}
for any compact $ U\subset \bbR $, $ k\geq 1 $ and $ T>0 $.

We begin by deriving an approximate expression for
the cross variance as in \eqref{eq:qv}.

\begin{lemma}\label{lem:qvApprox}
For near equilibrium initial conditions, we have
\begin{align}\label{eq:qvApprox}
\begin{split}
	&
	Z(s,\zeta_1) Z(s,\zeta_2) \Ex\BK{ \mgg(s,\zeta_1) \mgg(s,\zeta_2) \middle| \scrF(s) }
\\
	&
	\quad
	=
	\e^{2} \frac{\alpha\const}{(1+\alpha\const)^2}
	\BK{ \frac{(\nu+\alpha(s))\dens}{1+\alpha(s)} }^{|\zeta_1-\zeta_2|}
	\BK{ Z(s,\zeta_1\wedge\zeta_2)^2 + \er(s,\zeta_1,\zeta_2)}.
\end{split}
\end{align}
\end{lemma}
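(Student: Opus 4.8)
The plan is to reduce to the exact identity \eqref{eq:qv} of Proposition~\ref{prop:dSHE} and then carry out an $\e$-expansion of the two factors $\qv_1(s,\Cdot)$ and $\qv_2(s,\Cdot)$. By \eqref{eq:qv}, with $\zeta:=\zeta_1\wedge\zeta_2$, the left-hand side of \eqref{eq:qvApprox} equals $\big(\tfrac{(\nu+\alpha(s))\dens}{1+\alpha(s)}\big)^{|\zeta_1-\zeta_2|}\,\qv_1(s,\zeta)\qv_2(s,\zeta)$, so it suffices to establish
\begin{align*}
	\qv_1(s,\zeta)\,\qv_2(s,\zeta)
	=
	\e^2\,\tfrac{\alpha\const}{(1+\alpha\const)^2}\big(Z(s,\zeta)^2+\er(s,\zeta)\big),
	\qquad \zeta\in\e^{-1}U,
\end{align*}
after which \eqref{eq:qvApprox} follows by re-inserting the (deterministic, $\le 1$) geometric prefactor on both sides. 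Using the generator representation \eqref{eq:qv1:gen}--\eqref{eq:qv2:gen} I write $\qv_1(s,\zeta)=c_1(s)Z(s,\zeta)-A(s,\zeta)$ and $\qv_2(s,\zeta)=c_2(s)Z(s,\zeta)+A(s,\zeta)$, where $A(s,\zeta):=[\genps(s)*Z(s)](\zeta-\drifte(s))$ and $c_1(s),c_2(s)$ are the scalar prefactors appearing there, so that
\begin{align*}
	\qv_1(s,\zeta)\,\qv_2(s,\zeta)
	=
	c_1(s)c_2(s)\,Z(s,\zeta)^2
	+
	\big(c_1(s)-c_2(s)\big)Z(s,\zeta)A(s,\zeta)
	-
	A(s,\zeta)^2.
\end{align*}

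The first ingredient is the scalar asymptotics. From $\qe=e^{-\e}$ and \eqref{eq:centte} one reads off, uniformly in $s$, that $c_1(s)=-\e(1+\alpha\const)^{-1}+O(\e^2)$ and $c_2(s)=-\e\,\alpha\const(1+\alpha\const)^{-1}+O(\e^2)$; hence $c_1(s)c_2(s)=\e^2\alpha\const(1+\alpha\const)^{-2}+O(\e^3)$ and $c_1(s)-c_2(s)=O(\e)$. Since $Z(s,\zeta)=\bd(s,\zeta)$ and $Z(s,\zeta)^2=\bd(s,\zeta)$ for $\zeta\in\e^{-1}U$ (both immediate from the moment bound \eqref{eq:mom1}), the first term already supplies the main term: $c_1(s)c_2(s)Z(s,\zeta)^2=\e^2\alpha\const(1+\alpha\const)^{-2}\big(Z(s,\zeta)^2+\er(s,\zeta)\big)$.

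The crux is to control the remaining two terms, for which I must show $A(s,\zeta)=\e\,\er(s,\zeta)$, i.e.\ $\VertBK{A(s,\zeta)}_k\le C(U)\e^{1+v}$ for $\zeta\in\e^{-1}U$ and some fixed $v\in(0,1/2)$. Note that $A$ is \emph{a priori} only of order $\e$ in $L^k$ — the same size as $c_1(s)Z$ and $c_2(s)Z$ — so without an extra gain it would leave an $O(\e^2)$ remainder in $\qv_1\qv_2$ that survives the division by $\e^2$. The gain comes from the fact that $\genps(s,\Cdot)$ annihilates constants, $\sum_{\zeta'}\genps(s,\zeta')=\sum_{\zeta'}\hke(s+1,s,\zeta')-1=0$, which lets me write $A(s,\zeta)=\sum_{\zeta'}\genps(s,\zeta')\big(Z(s,\zeta-\drifte(s)-\zeta')-Z(s,\zeta-\drifte(s))\big)$. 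Now combine the exponential bound $|\genps(s,\zeta')|\le C\e\,\lambda^{|\zeta'|}$ of \eqref{eq:genBd}, where $\lambda:=\tfrac{(\nu+\alphae(s))\dens}{1+\alphae(s)}\le\lambda_0<1$ for $\e$ small, with the near-equilibrium H\"older estimate \eqref{eq:momx1}, namely $\VertBK{Z(s,\zeta-\drifte(s)-\zeta')-Z(s,\zeta-\drifte(s))}_k\le C(\e|\zeta'|)^v e^{u\e(2|\zeta-\drifte(s)|+|\zeta'|)}$. For $\zeta\in\e^{-1}U$ the factor $e^{2u\e|\zeta-\drifte(s)|}$ is bounded by a constant depending only on $U$, and $\sum_{\zeta'}\lambda^{|\zeta'|}|\zeta'|^v e^{u\e|\zeta'|}$ is finite uniformly for $\e$ small since $\lambda e^{u\e}<1$; summing yields the claimed bound. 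This step — marrying the exact algebraic annihilation property of $\genps$ with the spatial regularity of $Z$ furnished by Proposition~\ref{prop:tight} — is the one I expect to be the main obstacle; the rest is bookkeeping.

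Finally I assemble. With $c_1(s)-c_2(s)=O(\e)$, $Z(s,\zeta)=\bd(s,\zeta)$ and $A(s,\zeta)=\e\,\er(s,\zeta)$ one gets $\big(c_1(s)-c_2(s)\big)Z(s,\zeta)A(s,\zeta)=O(\e)\cdot\bd\cdot\e\,\er=\e^2\er(s,\zeta)$ and $A(s,\zeta)^2=(\e\,\er)^2=\e^2\er(s,\zeta)$, using that a product of a uniformly bounded process with a uniformly vanishing one (respectively, of two uniformly vanishing ones) is again uniformly vanishing. Adding the three contributions, absorbing the harmless nonzero constant $\alpha\const(1+\alpha\const)^{-2}$ into $\er$, and re-inserting the geometric prefactor on both sides gives \eqref{eq:qvApprox}. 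The restriction to near-equilibrium initial conditions enters precisely because that is where \eqref{eq:mom1} and \eqref{eq:momx1} are available.
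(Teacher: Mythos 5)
Your proposal is correct and follows essentially the same route as the paper: reduce to the exact identity \eqref{eq:qv}, expand the scalar prefactors $\qe\cente(s)-1$ and $1-\cente(s)$ to order $\e$ via \eqref{eq:centte}, and gain the extra factor $\e^{v}$ on $[\genps(s)*Z(s)](\zeta_{-s})$ by combining the fact that $\genps(s,\Cdot)$ sums to zero with the exponential bound \eqref{eq:genBd} and the near-equilibrium H\"older estimate \eqref{eq:momx1}. The only difference is cosmetic — the paper approximates $\qv_1$ and $\qv_2$ separately as $\text{const}\cdot\e Z+\e\,\er$ and then multiplies, while you expand the product into three terms first — and your signs and leading coefficient agree with the paper's \eqref{eq:qv1Approx}--\eqref{eq:qv2Approx}.
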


\begin{proof}
We prove \eqref{eq:qvApprox} by approximating the identities \eqref{eq:qv1:gen}--\eqref{eq:qv2:gen},
using the moment estimate \eqref{eq:momx1}.
By \eqref{eq:centte} we have $ (\qe\cente(t)-1) = -\e(1+\alpha\const)^{-1} + O(\e^2) $,
so that $ (\qe\cente(t)+1)Z(s,\zeta) = -\e(1+\alpha\const)^{-1}Z(s,\zeta) + \e\er(s,\zeta) $,
and by \eqref{eq:momx1}, fixing arbitrary $ v\in(0,1/2) $,
we have 
$ 
	Z(s,\zeta') = Z(s,\zeta) + |\e(\zeta'-\zeta)|^{v} \bd(s,\zeta,\zeta') 
%	= Z(s,\zeta) + \er(s,\zeta,\zeta') 
$.
In \eqref{eq:qv1:gen}, using these approximations we obtain
\begin{align}
	&
	\notag
	\qv_1(s,\zeta)
	=
	-\e(1+\alpha\const)^{-1} Z(s,\zeta) + \e \er(t,\zeta)
\\
	&
	\label{eq:qv1Approx:}
	\quad
	- \Big( \sum_{\zeta'\in\Xi(s)} \genps(s,\zeta_{-s}-\zeta') \Big) Z(s,\zeta)
	+ \sum_{\zeta\in\Xi(s)} \genps(s,\zeta_{-s}-\zeta') |\e(\zeta'-\zeta)|^{v} \bd(s,\zeta,\zeta') .
\end{align}
With $ \genps(s,\zeta_{-s}-\zeta') $ as in \eqref{eq:genps},
the second last term in \eqref{eq:qv1Approx:} is zero since $ \gen(s,\zeta) $,
and the last term is of the form $ \e^{1+v} \bd(s,\zeta) \leq \e \er(s,\zeta) $ by \eqref{eq:genBd}.
Consequently,
\begin{align}\label{eq:qv1Approx}
	\qv_1(s,\zeta)
	=
	-\e(1+\alpha\const)^{-1} Z(s,\zeta) + \e \er(t,\zeta).
\end{align}
Similarly, for $ \qv_2(s,\xi_2) $ we have
\begin{align}\label{eq:qv2Approx}
	\qv_2(s,\zeta)
	= -\e \alpha\const(1+\alpha\const)^{-1} Z(s,\zeta) + \e \er(s,\zeta).
\end{align}
Combining \eqref{eq:qv1Approx}--\eqref{eq:qv2Approx} with \eqref{eq:qv}
yields \eqref{eq:qvApprox}.
\end{proof}

We proceed to proving Proposition~\ref{prop:unique}.
Recall from \cite{bertini97} the following martingale problem of the \ac{SHE}.

\begin{definition}\label{defn:MGprob}
Let $\Zlim$ be a $C([0,\infty),C(\bbR))$-valued process such that
given any $T>0$, there exists $u<\infty$ such that
\begin{equation}\label{eq:mgprob:mom}
	\sup_{\tau\in[0,T]}
	\sup_{r\in \bbR} e^{-u|r|} \Ex \BK{\Zlim(\tau,r)^2}
	<
	\infty.
\end{equation}
For such $ \Zlim $ and for $ \psi\in C^\infty_c(\bbR) $,
let $ \anglebk{\Zlim(\tau),\psi} := \int_{\bbR} \Zlim(\tau,r) \psi(r) dr $.
We say $ \Zlim $ solves the martingale problem with initial condition
$ \Zic\in C(\bbR) $ if $ \Zlim(0,\Cdot)=\Zic(\Cdot) $ in distribution, and
\begin{align*}
	&
	\tau
	\longmapsto
%	\label{eq:mgprob1}
	N_\psi(\tau)
	:=
	\angleBK{ \Zlim(\tau),\psi }
	- \angleBK{ \Zlim(0),\psi }
	- \int_0^\tau 2^{-1} \angleBK{ \Zlim(\tau'),\tfrac{d^2}{dx^2} \psi } d\tau'.
\\
	&
%	\label{eq:mgprob2}
	\tau
	\longmapsto	
	\hatN_{\psi}(\tau)
	:=
	 (N_\psi(\tau))^2
	 - \int_0^\tau \angleBK{ \Zlim(\tau')^2, \psi^2 } d\tau'
\end{align*}
are local martingales, for any $\psi\in C^\infty_c(\bbR)$.
\end{definition}

\begin{proof}[Proof of Proposition~\ref{prop:unique}]
Recall from \cite[Proposition~4.11]{bertini97} that
for any initial condition $ \Zic $ satisfying
\begin{align}\label{eq:mgic}
	\VertBK{ \Zic(r) }_{2} \leq C e^{a|r|},	\quad\text{for some }a >0,
\end{align}
the martingale problem of Definition~\ref{defn:MGprob} has a unique solution,
which coincides with the law of the solution of the \ac{SHE}
with initial condition $ \Zic $.
By passing to the relative subsequence,
we assume that
$ \Ze \Rightarrow \Zlim $,
which, by \eqref{eq:mom1}, satisfies the moment condition \eqref{eq:mgprob:mom}.
It hence suffices to show that $ \Zlim $
solves the martingale problem in Definition~\ref{defn:MGprob}.

We begin by deriving the discrete analog of $ N_\psi(\tau) $ and $ \hatN_\psi(\tau) $.
To this end, fixing $ \psi\in C^\infty_c(\bbR) $,
we consider the discrete approximation
\begin{align}\label{eq:Zpsie}
	\angleBK{ Z(t), \psi }_\e := \e \xcent^{-1} \sum_{\xi\in\Xi(t)} Z(t,\xi) \psi( \e\xcent^{-1}\xi )
\end{align}
of $ \anglebk{ \Ze( \e^{3}(\tcente J)^{-1}t ), \psi } $,
and similarly define
\begin{align}\label{eq:mgpsi}
	M_\psi(t) :=
	\e \xcent^{-1} \sum_{\xi\in\Xi(t+1)} Z(t,\xi+\drift(t))
	\mgg(t,\xi+\drift(t)) \psi( \e\xcent^{-1}\xi ).
\end{align}
In \eqref{eq:dSHE:}, multiply both sides by $ \e\xcent^{-1}\psi( \e\xcent^{-1}\xi ) $.
Upon summing over $ \xi \in \Xi(t+1) $,
we obtain
$
	\anglebk{ Z(t+1), \psi }_\e =
	\anglebk{ Z(t), \psi_{\hke(t+1,t)} }_\e + M_\psi(t),
$
where
\begin{align*}
	\psi_{\hke(t+1,t)}(\zeta) := 
	\sum\nolimits_{\xi\in\Xi(t+1)} \hke(t+1,t,\xi-\zeta) \psi(\e^{-1}\xcent\xi).
\end{align*}
Subtracting $ \anglebk{Z(t),\psi}_\e $ from both sides,
we further obtain
\begin{align}\label{eq:dmgprob}
	\angleBK{ Z(t+1), \psi }_\e - \angleBK{ Z(t), \psi }_\e
	=
	\anglebk{ Z(t), \psi_{\gen(t)} }_\e + M_\psi(t,\xi),
\end{align}
where
\begin{align}\label{eq:psigen}
	\psi_{\gen(t)}(\zeta) 
	:= 
	\sum_{\xi\in\Xi(t+1)} \hke(t+1,t,\xi-\zeta) \psi(\e^{-1}\xcent\xi) - \psi(\e^{-1}\xcent\zeta).
\end{align}

Now, summing \eqref{eq:dmgprob} over $ s=0,\ldots,t-1 $,
we arrive at
\begin{align}\label{eq:mgprob:d1}
	\angleBK{ Z(t), \psi }_\e - \angleBK{ Z(0), \psi }_\e
	-
	\sum_{s=0}^{t-1} \angleBK{ Z(s), \psi_{\gen(t)} }_\e
	=
	\sum_{s=0}^{t-1} \mg_\psi(s)
	:= \Ne_\psi(t).
\end{align}
The process $ t\mapsto \Ne_\psi(t) $ is a discrete time martingale
of quadratic variation\\
$ \sum_{s=0}^{t-1} \Ex( \mg_\psi(s)^2 | \scrF(s) ) $,
so
\begin{align}\label{eq:mgprob:d2}
	\hatNe_\psi(t)
	:=
	\BK{ \Ne_\psi(t) }^2
	- \sum_{s=0}^{t-1} \Ex\BK{  \mg_\psi(s)^2 \middle| \scrF(s) }
\end{align}
is also a discrete time martingale.

With $ \Ne_\psi(t) $ and $ \hatNe_\psi(t) $ as in the preceding,
we proceed to showing
that $ N_\psi(\tau) $ and $ \hatN_\psi(\tau) $ are local martingales.
Since, by \eqref{eq:momt1},
passing from discrete time to continuous time
introduces only a negligible error,
it suffices to show that terms in \eqref{eq:mgprob:d1}--\eqref{eq:mgprob:d2}
converge in distribution to the corresponding terms.
More precisely, recalling $ \te(\tau) :=\e^{-3}\tcente J \tau $, our goal is to show
\begin{align*}
	\angleBK{Z(\te(\tau)),\psi}_\e
	&\Longrightarrow
	\angleBK{ \Zlim(\tau), \psi },
\\
	\sum_{s<\te(\tau)} \angleBK{ Z(s), \psi_{\gen} }_\e
	&\Longrightarrow
	\int_0^\tau 2^{-1} \angleBK{ \Zlim(\tau'), \frac{d^2\psi}{dx^2} } d\tau',
\\
	\sum_{s<\te(\tau)} \Ex\BK{  \mg_\psi(s)^2 \middle| \scrF(s) }
	&\Longrightarrow  	
	\int_0^\tau \angleBK{ \Zlim^2(\tau'), \psi(\tau')^2 } d\tau'.
\end{align*}
To this end,
since $ \Ze \Rightarrow \Zlim $, it clearly suffices to show
\begin{align}
	\label{eq:cnvg:Zt}
	&
	\Ex \Big|
		\angleBK{Z(\te(\tau)),\psi}_\e
		- \angleBK{ \Ze(\tau), \psi }
	\Big|
	\longrightarrow 0,
\\
	\label{eq:cnvg:drift}
	&
	\Ex \Big|
		\sum_{s<\te(\tau)} \angleBK{ Z(s), \psi_{\gen} }_\e
		-
		\int_0^\tau \angleBK{ \Ze(\tau'), \frac{d^2\psi}{dx^2} } d\tau'
	\Big|
	\longrightarrow 0,
\\
	\label{eq:cnvg:mg}
	&
	\Ex \Big|
		\sum_{s<\te(\tau)} \Ex\BK{  \mg_\psi(s)^2 \middle| \scrF(s) }
		-
		\int_0^\tau \angleBK{ \Ze^2(\tau'), \psi^2 } d\tau'
		\Big|
	\longrightarrow
	0.
\end{align}
We prove \eqref{eq:cnvg:Zt}--\eqref{eq:cnvg:mg} as follows.
\end{proof}

\begin{proof}[Proof of \eqref{eq:cnvg:Zt}]
This amounts to show that the terms
\begin{align}\label{eq:Zpsi}
	\angleBK{ \Ze(\tau), \psi }
	=
	\e \xcent^{-1} \int_{\bbR} Z(\te(\tau),r) \psi(\e \xcent^{-1}r) dr
\end{align}
and $ \angleBK{ Z(\te(\tau)), \psi }_\e $ are approximately equal.
To this end,
fixing arbitrary $ \zeta\in\Xi(t) $ and $ |r-\zeta|\leq 1 $,
we use the smoothness of $ \psi $
and the moment estimates \eqref{eq:mom1}--\eqref{eq:momx1}, for arbitrary $ v\in(0,1/2) $,
to obtain
$
	 Z(t,\zeta) \psi(\e\xcent^{-1}\zeta)
	- Z(t,r) \psi(\e\xcent^{-1} r)
	= \e^v\bd(\zeta,r)
	= \er(t,\zeta,r).
$
From this, with $ \anglebk{ \Ze(\tau), \psi } $ and $ \anglebk{ Z(\te(\tau)), \psi }_\e $
as in \eqref{eq:Zpsi} and \eqref{eq:Zpsie}, we conclude
$
	\anglebk{ \Ze(\tau), \psi }
	=
	\anglebk{ Z(\te(\tau)), \psi }_\e + \er(t),
$
whereby \eqref{eq:cnvg:Zt} follows.
\end{proof}

\begin{proof}[Proof of \eqref{eq:cnvg:drift}]
Taylor expanding $ \psi(\xi\e\xcent^{-1}) $ around $ \xi=\zeta $ yields
\begin{align*}
	\psi(\e\xcent^{-1}\xi)
	=&
	\psi(\e\xcent^{-1}\zeta)
	+ \BK{ \frac{d\psi}{dx}(\e\xcent^{-1}\zeta)} \e\xcent^{-1}(\zeta-\xi)
\\
	&
	+ \BK{ 2^{-1} \BK{ \frac{d^2\psi}{dx^2}(\e\xcent^{-1}\zeta)} + \er(\xi,\zeta) }
	\e^2\xcent^{-2} (\zeta-\xi)^2.
\end{align*}
Plug this in \eqref{eq:psigen}. With
$
	\sum_{\xi\in\Xi(s+1)} \hke(s+1,s,\xi-\zeta)(\xi-\zeta)^k = \Ex(\RW_\e(s)^k),
$
using $ \Ex(\RW(s))=0 $, \eqref{eq:RWvar} and $ \vare \leq C\e $,
we obtain
\begin{align*}
	\psi_{\gen(s)} (\zeta)
	=
	2^{-1} \BK{ \frac{d^2\psi}{dx^2}(\e\xcent^{-1}\zeta)} \e^{2} \vare(s)
	+
	\e^3 \er(s,\zeta). 	
\end{align*}
Now, plugging this expression of $ \psi_{\gen(s)} (\zeta) $ in the l.h.s.\ of \eqref{eq:cnvg:drift},
with $ \te(\tau) \leq \e^{-3} C $,
we obtain
%$ \sum_{s\leq \te(\tau)} \anglebk{ Z(s), \e^{3}\er }_\e = \er $,
%thereby
\begin{align*}
	\sum_{s<\te(\tau)} \angleBK{ Z(s), \psi_{\gen(s)} }_\e
	=
	\sum_{s<\te(\tau)} \e^{2} \vare(s) \angleBK{ Z(s), 2^{-1}\frac{d^2\psi}{dx^2} }_\e + \er.
\end{align*}
Next, divide the sum on the r.h.s.\
into sums over the disjoint intervals $ T_t := \bbZ\cap[tJ,tJ+J) $.
We neglect the boundary terms of $ T_{\tau\e^{-3}/\tcente} \cap [0,\te(\tau)) $,
since, by \eqref{eq:mom1}, those terms contribute only $ \e^{2}\vare(s) \bd = \er $.
Within each interval $ T_t $, use \eqref{eq:momt1} to replace
$ \anglebk{ Z(s), \frac{d^2\psi}{dx^2} }_\e $
with $ \anglebk{ Z(t J), \frac{d^2\psi}{dx^2} }_\e + \er(s) $.
Further, with $ \vare(s) $ and $ \tcente $ as in \eqref{eq:vart} and \eqref{eq:tcent},
we have $ \sum_{s\in T_t} \vare(s) = \e(\tcente)^{-1} $.
Consequently,
\begin{align}\label{eq:cnvgdr:}
	\sum_{s<\te(\tau)} \angleBK{ Z(s), \gen\psi }_\e
	=
	\frac{\e^{3}}{\tcente} \sum_{t < \e^{-3}\tcente \tau} \angleBK{ Z(tJ), 2^{-1}\frac{d^2\psi}{dx^2} }_\e + \er.
\end{align}

The r.h.s.\ of \eqref{eq:cnvgdr:} represents a discrete approximation
of $ \int_0^\tau \anglebk{ \Ze(\tau'),2^{-1}\frac{d^2\psi}{dx^2} } d \tau' $.
In particular, by following the same procedure as in the proof of \eqref{eq:cnvg:Zt},
one obtains\\
$
	\frac{\e^{3}}{\tcente} \sum_{t < \te(\tau)}
	\anglebk{ Z(tJ), 2^{-1}\frac{d^2\psi}{dx^2} }_\e
	=
	\int_0^{\tau}
	\anglebk{Z(\tau'),2^{-1}\frac{d^2\psi}{dx^2}} d \tau'
	+
	\er,
$
thereby concluding \eqref{eq:cnvg:drift}.
\end{proof}

\begin{proof}[Proof of \eqref{eq:cnvg:mg}]
To calculate $ 	\Ex( M_\psi(s)^2 | \scrF(s) ) $,
we use the expression \eqref{eq:mgpsi}
and the approximation \eqref{eq:qvApprox} to obtain
\begin{align}\label{eq:cnvgmg:id}
	&
	\Ex\BK{ M_\psi(s)^2 \middle| \scrF(s) }
	=
	\frac{\e^4\alpha\const}{(1+\alpha\const)^2\xcent^2}
	\sum_{\xi\in\Xi(s+1)} Z(s,\xi_{+s})^2 \psi(\e\xcent^{-1} \xi)
	F(s,\xi)
	+
	\e^{3} \er(s),
\end{align}
where
\begin{align}\label{eq:cnvgmg:F}
	F(s,\xi) :=
	\sum_{n\in\bbZ}
	\psi(\e\xcent^{-1}(\xi+|n|))
	\BK{ \frac{(\nu+\alphae(s))\dens}{1+\alphae(s)} }^{|n|}.
\end{align}
Let $ \eta_\e := \sum_{n\in\bbZ} ( \frac{(\nu+\alphae(s))\dens}{1+\alphae(s)} )^{|n|}  $.
In \eqref{eq:cnvgmg:F},
using the continuity of $ \psi $ at $ \e\xcent^{-1}\xi $,
we further obtain
$ 	
	F(s,\xi) = \eta_\e \psi(\e\xcent^{-1}\xi) + \er(s,\xi).
$
Plugging this expression in \eqref{eq:cnvgmg:id},
we arrive at
\begin{align}\label{eq:cnvgmg:s}
	&
	\Ex\BK{ \mg_\psi(s)^2 \middle| \scrF(s) }
	=
	\e^3 \alpha\const\eta_\e((1+\alpha\const)^2\xcent)^{-1}  \angleBK{Z^2(s),\psi^2}_\e
	+
	\e^{3} \er(s).
\end{align}
Calculating $ \eta_\e $ to the first order
we have $ \eta_\e = \frac{1+\alpha+\nu\dens+\alpha\dens}{1+\alpha-\nu\dens-\alpha\dens} + O(\e) $.
Using this and \eqref{eq:tcentApprox},
a tedious but straightforward calculation shows that
$ \alpha\const\eta_\e((1+\alpha\const)^2\xcent)^{-1} = (J\tcente)^{-1} + O(\e) $.
Consequently, summing \eqref{eq:cnvgmg:s} over $ s<\te(\tau) $ yields
\begin{align*}
	&
	\sum_{s<\te(\tau)} \Ex\BK{ \mg_\psi(s)^2 \middle| \scrF(s) }
	=
	\frac{\e^{3}}{\tcente J} \sum_{s< \te(\tau)} \angleBK{Z^2(s),\psi^2}_\e
	+
	\er(s).
\end{align*}
The r.h.s.\ represents a discrete approximation
of $ \int_0^\tau \anglebk{ \Ze(\sigma)^2,\psi^2 } d \sigma $,
so following the same procedure as in the proof of \eqref{eq:cnvg:Zt},
one concludes \eqref{eq:cnvg:mg}.
\end{proof}

%\section{The Step Initial Condition: Proof of Theorem~\ref{thm:step}}
%\label{sect:step}

\bibliographystyle{abbrv}
\bibliography{CT-vertexKPZ}

\end{document}